\theoremstyle{plain}
\newtheorem{prop}{Proposition}[section]
\newtheorem{thm}[prop]{Theorem}
\newtheorem{lem}[prop]{Lemma}
\theoremstyle{remark}
\theoremstyle{definition}
\newtheorem{defn}[prop]{Definition}
\newtheorem{exm}[prop]{Example}
\newcommand{\formal}[1]{\ensuremath{\textsf{#1}}}
\newcommand{\dword}[1]{\textbf{#1}}
\newcommand{\product}{\ensuremath{\mu}}
\newcommand{\seq}[1]{\ensuremath{ \langle#1 \rangle}}
\newcommand{\sym}{\ensuremath{ \mathbf{S} }}
\newcommand{\VT}{\ensuremath{\mathcal{T}}}
\newcommand{\per}[1]{\ensuremath{ \mathsf{#1} }} 
\newcommand{\ps}[1]{\ensuremath{ \mathbf{#1} }}  
\newcommand{\wk}{\ensuremath{\formal{T}}}
\newcommand{\graph}[1]{\ensuremath{ \mathbf{#1} }}
\newcommand{\migt}{MIGT}
\newcommand{\walk}[1]{\seq{#1}}
\newcommand{\slide}{\ensuremath{\mathcal{S}}}
\newcommand{\dual}{\ensuremath{\mathcal{D}}}
\newcommand{\mindbod}[2]{\ensuremath{{ {#1}\choose{#2} }}}
\newcommand{\mact}{\ensuremath{ \ \textcircled{m} \ }}
\newcommand{\bact}{\ensuremath{ \ \textcircled{b} \ }}
\newcommand{\id}{\ensuremath{\mathcal{I}}}
\newcommand{\trails}[1]{\ensuremath{\mathcal{#1}}}
\newcommand{\arc}[1]{\ensuremath{\overrightarrow{(#1)}}}
\newcommand{\edge}[1]{\ensuremath{\overline{#1}}}
\newcommand{\bij}{\ensuremath{\mathcal{B}}}
\newcommand{\cpart}[2]{\ensuremath{#1 \bigcirc #2}}
\newcommand{\fpart}[2]{\ensuremath{#1 \bigtriangleup #2}}
\newcommand{\cnorm}{\formal{C-Index}}
\newcommand{\fnorm}{\formal{T-Index}}
\newcommand{\rd}{-}
\newcommand{\F}{\ensuremath{\mathcal{F}}}
\newcommand{\Fdown}[1]{\ensuremath{\F^{(#1 \ldots 1)}}}
\newcommand{\Fup}[1]{\ensuremath{\F^{(1 \ldots #1)}}}
\newcommand{\junk}[1]{}
\title{Three Graph Duals and A Bijection}
\author{Nikolaos Apostolakis}
\address{Department of Mathematics and Computer Science, Bronx Community College (CUNY), Bronx, NY, U.S.A.}
\email{nikolaos.apostolakis@bcc.cuny.edu}
\author{Kerry Ojakian}
\address{Department of Mathematics and Computer Science, Bronx Community College (CUNY), Bronx, NY, U.S.A.}
\email{kerry.ojakian@bcc.cuny.edu}
\date{March 19, 2017}
\begin{document}
\maketitle

\tikzstyle{vertex}=[circle,fill=black!25,minimum size=20pt,inner sep=0pt]
\tikzstyle{minivertex}=[circle,fill=black!25,minimum size=10pt,inner sep=0pt]
\tikzstyle{edge} = [draw,thick,-]
\tikzstyle{miniedge} = [draw,dashed,-]
\tikzstyle{weight} = [font=\small]

\begin{abstract}

We develop a notion of a dual of a graph, generalizing the definition of Goulden and Yong (which only applied to trees), and reproving their main result using our new notion.  We in fact give three definitions of the dual: a graph-theoretic one, an algebraic one, and a combinatorial ``mind-body'' dual, showing that they are in fact the same, and are also the same (on trees)
 as the topological dual developed by Goulden and Yong.  Goulden and Yong use their dual to define a bijection between the vertex labeled trees and the factorizations of the permutation $(n, \ldots, 1)$ into $n-1$ transpositions, showing that their bijection has a particular structural property.    We reprove their result using our dual instead.

\smallskip
\noindent \textsc{Keywords.} Multi-Graph, Trail, Transposition, Dual, Bijection

\end{abstract}

\section{Introduction}

The basic objects we investigate are factorizations of permutations into
transpositions.
By $\sym_n$ we mean the symmetric group on the set $[n]= \{ 1, 2, \ldots, n\}$;
for us, all multiplication is from left to right. To
refer to ``factorizations'' precisely we introduce the notion of 
a \emph{transposition sequence}.
\begin{defn}
A \dword{transposition sequence} (over $\sym_n$) is a sequence of transpositions
$\per{s} =\seq{\per{s_1}, \ldots, \per{s_m}}$.  We write $\product(\per{s})$,
called the \dword{product} of \per{s}, to mean the permutation resulting from the product: 
$\per{s_1} \cdot \per{s_2} \cdots \per{s_m}$.
\end{defn}

\begin{exm} \label{ex_trans_seq}
The sequence $\ps{s} = \seq{(3,4), (1,3), (1,2), (3,4), (2,3)}$ is a transposition sequence over $\sym_4$, and its product
$\product(\per{s}) = (4, 3, 2, 1)$.
\end{exm}
We use standard notation, letting $(n, \ldots, 2, 1)$ represent the permutation
mapping $n$ to $n-1$, $n-1$ to $n-2$, and so on, with $1$ mapped to $n$.
This permutation has a number of factorizations into $n-1$ transpositions.
For example, the permutation $(3, 2, 1)$ in $\sym_3$ has exactly three distinct factorizations into $2$ transpositions, represented by the following transposition
sequences:
$\seq{(1,2), (2, 3)}$, $\seq{(2,3), (1, 3)}$ and $\seq{(1, 3), (1,2)}$.  
D{\'e}nes~\cite{Denes1959} showed that in general there are exactly $n^{n-2}$ factorizations of $(n, \ldots, 2, 1)$ into $n-1$ transpositions.  Since it is well-known that there are also $n^{n-2}$ vertex labeled trees on $n$ vertices, D{\'e}nes suggested the project of finding interesting bijections between these factorizations and these trees.
While interesting in its on right, the project posed by D{\'e}nes is further motivated by fitting it into a broader context suggested by \cite{GouldenJackson1997} and \cite{GouldenYong}.  A factorization of $(n, \ldots, 2, 1)$ into $n-1$ transpositions is in fact what is called a \emph{minimal transitive factorization}; any permutation of $\sym_n$ can be factored into its minimal transitive factorizations.
Minimal transitive factorizations are of interest because of
their connection to topology (for example, see
 \cite{Arnold1996} and \cite{GJV2001}).

\begin{defn} \
\label{def_FT}

\begin{itemize}

\item
Let \Fdown{n} be the set of length $n-1$ transposition sequences over $\sym_n$, with product $(n, \ldots, 2, 1)$.

\item
Let 
$\VT_n$ be the set of trees on $n$ vertices, so that each vertex gets a distinct label from the set $[n]$.

\end{itemize}

\end{defn}

\noindent
Using this terminology, D{\'e}nes' challenge is to find bijections between \Fdown{n} and 
$\VT_n$. Moszkowski~\cite{Moszkowski1989} found a bijection in 1989; then in 1993
Goulden and Pepper~\cite{GouldenPepper1993} found a different bijection.
However, arguably the nicest bijection is developed in 2002 by Goulden and Yong \cite{GouldenYong}; in this bijection, various structural properties are preserved.  
Essential in the bijection of \cite{GouldenYong} is their definition of the dual of a tree, defined topologically.  
The main point of our work is an alternative definition of the dual and the bijection, along with an alternative proof that the bijection has
the desired structural properties.  Our bijection will turn out to be the same as theirs and
we will give three definitions of the dual which will 
coincide with one another and, on trees, with their definition.
We also give credit to Herando Mart\'{i}n \cite{Martin1999} who, in 1999, independently developed the dual from \cite{GouldenYong}, though the work of \cite{Martin1999} then goes in a different direction from that of \cite{GouldenYong}.

In Section~\ref{sec_mb} we interpret a transposition sequence as instructions for 
a sequence of mind-body swaps (currently science fiction), developing our first  definition of dual.
In Section~\ref{sec_graphs} we give a graph-theoretic interpretation of transposition
sequences, and in Section~\ref{sec:greedy} we give an equivalent second definition of the dual in
the graph-theoretic context.
In Section~\ref{sec_alg} we give our third definition of dual, an algebraic characterization,
which leads to a simple graph-theoretic algorithm for computing the dual.
In Section~\ref{sec_alt} we define a bijection between \Fdown{n} and $\VT_n$ that 
enjoys the same nice structural properties as the bijection from \cite{GouldenYong}.
In Section~\ref{sec_gydual} we prove that our dual (when restricted to trees) is in fact the same as the topological dual of \cite{GouldenYong} and \cite{Martin1999}.
Our dual is interesting in its own right, and because it applies to all finite graphs, coincides with \cite{GouldenYong} and \cite{Martin1999}  for trees, and allows us to give very different proofs for results that use the dual.

\section{Mind-Body Interpretation}
\label{sec_mb}

Following Evans and Huang~\cite{EvansHuang2014}, which is based on some science fiction shows, we can view a transposition sequence as instructions 
for a sequence of mind-body swaps; we will find this interpretation technically useful and interesting in its in own right.
We imagine that there is a mind-swapping machine (which we just call \dword{The Machine}), with positions for two people.  When we operate The Machine, we don't see anything happen, but the minds insides the two bodies are swapped.  In fact in this scenario, properly speaking, it is not clear where the person is, since their mind may not be in their body. Thus 
we should say that two bodies (say $B_1$ and $B_2$) enter The Machine (each body is currently associated to some mind, say mind $M_1$ is in $B_1$ and mind $M_2$ is in $B_2$); after the operation of The Machine, body $B_1$ contains mind $M_2$ and body $B_2$ contains mind $M_1$.
To keep track of the current state of affairs we use a \emph{Mind-Body Assignment}. 
\begin{defn}

A \dword{Mind-Body Assignment} (over $n$) is a permutation in $\sym_n$ written using inline notation, i.e. the permutation mapping $M_k$ to $B_k$ for $k = 1, \ldots, n$ is written as \mindbod{M_1, \ldots, M_n}{B_1, \ldots, B_n}.
The top elements are called the \emph{minds} and the bottom elements are called the \emph{bodies}. We say that $M_i$ is \dword{above} $B_i$ and 
$B_i$ is \dword{below} $M_i$.
Note that the order of the $M_i$ or $B_i$ is irrelevant; all that matters is the assignment.

\end{defn}

\begin{exm}
The Mind-Body Assignment \mindbod{1, 2, 3, 4}{4, 1, 2, 3}
indicates that mind 1 is in body 4, mind 2 is in body 1, mind 3 is in body 2, and mind 4 is in body 3.
As order does not matter, the mind-body assignment \mindbod{2, 3, 4, 1}{1, 2, 3, 4}
is the same as \mindbod{1, 2, 3, 4}{4, 1, 2, 3}.
\end{exm}

To make our discussion precise, we will in fact
view a transposition sequence as \emph{either} instructions for a series of mind swaps or as instructions for a series of body swaps.

\begin{defn}
We define two operations on the Mind-Body Assignments.
Let $\per{s} = (x,y)$ be a transposition in $\sym_n$ and let $A$ be a Mind-Body Assignment over $n$.

\begin{itemize}

\item The \dword{Mind-Swapping Operation}: We define $A \mact \per{s}$ to be the Mind-Body Assignment in which the order of the bodies is unchanged, and minds $x$ and $y$ are swapped.

\item
The \dword{Body-Swapping Operation}: We define $A \bact \per{s}$ to be the Mind-Body Assignment in which the order of the minds is unchanged, and bodies $x$ and $y$ are swapped.

\item If $\per{s} =\seq{\per{s_1}, \ldots, \per{s_m}}$ is a transposition sequence, we write $A \mact \ps{s}$ to mean
$(A \mact \per{s_1}) \mact{\per{s_2}} \ldots$, and write $A \bact \ps{s}$ for 
$(A \bact \per{s_1}) \bact{\per{s_2}} \ldots$

\end{itemize}

\end{defn}

\begin{exm} \ \label{ex_bact_mact}

\begin{itemize}

\item
\mindbod{1, 2, 3, 4}{1, 2, 3, 4} \bact \seq{(3,4), (1,3)} $=$ \mindbod{1,2,3,4}{3,2,4,1}

\item
\mindbod{1, 2, 3, 4}{1, 2, 3, 4} \mact \seq{(3,4), (1,3)} $=$
\mindbod{3,2,4,1}{1,2,3,4}

\end{itemize}

\end{exm}

Note that The Machine, as discussed above, is formalized by the body-swapping operation.  
To see this, consider Example~\ref{ex_bact_mact} and what happens if The Machine follows the instructions \seq{(3,4), (1,3)} .  First bodies $3$ and $4$ step
into The Machine, and then bodies $1$ and $3$ step into The Machine.
The first operation makes it so that mind $4$ is now in body $3$ and mind $3$ is now in
body $4$.  For the second operation, bodies $1$ and $3$ step into the machine,
resulting in body $3$ having mind $1$ and body $1$ having mind $4$ (since mind $4$ was 
 in body $3$ at that point).  Note that in the example, the body-swapping operation 
accomplishes exactly this.

It is natural to assume that the original position of the minds is such that mind $k$ is in body $k$.  This original position is represented by the identity permutation written as a Mind-Body Assignment:
We let $\id_n$ be the identity Mind-Body Assignment \mindbod{1, 2, \ldots, n}{1, 2, \ldots, n}; if $n$ is clear from context, we may just write \id.
Since we will generally view the instructions as a sequence of body swaps, it will be interesting to record the \emph{effects} of such instructions as a sequence of Mind-Body Assignments.

\pagebreak

\begin{defn}
Given a transposition sequence $\ps{s} = \seq{\per{s_1}, \ldots, \per{s_m}}$, its
\dword{Corresponding Mind-Body Sequence} is the sequence of Mind-Body Assignments
\seq{A_0, A_1, \ldots, A_m}, where 
\begin{itemize}

\item
$A_0 = \id$, and

\item
$A_k = \id \bact \seq{\per{s_1}, \ldots, \per{s_k}}$.

\end{itemize}


\end{defn}
 
\begin{exm} \label{ex_mindbod_seq}
The Corresponding Mind-Body Sequence of the transposition sequence in 
Example~\ref{ex_trans_seq} is \seq{A_0, A_1, A_2, A_3, A_4, A_5}, where

\setlength\tabcolsep{10pt}
\begin{tabular}{lll}
$A_0 = \mindbod{1, 2, 3, 4}{1, 2, 3, 4}$ & 
$A_1 = \mindbod{1, 2, 3, 4}{1, 2, 4, 3}$ & 
$A_2 = \mindbod{1, 2, 3, 4}{3, 2, 4, 1}$ \\
$A_3 = \mindbod{1, 2, 3, 4}{3, 1, 4, 2}$ &
$A_4 = \mindbod{1, 2, 3, 4}{4, 1, 3, 2}$ &
$A_5 = \mindbod{1, 2, 3, 4}{4, 1, 2, 3}$ \\
\end{tabular}

\end{exm}

\noindent
Immediately from the definitions, we have the following. 

\begin{lem}
\label{lem_acts_are_products}

Let $\per{s} = (x,y)$ be a transposition and let $A$ be a Mind-Body Assignment.  Then
$A \mact (x,y) = (x,y) \cdot A$ \ and \ 
$A \bact (x,y) = A \cdot (x,y)$.

\end{lem}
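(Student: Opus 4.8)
The plan is to fix a concrete reading of each Mind-Body Assignment $A$ as a permutation: the one sending each mind to the body it currently occupies, so that $A(k)$ is the body holding mind $k$. This is exactly the inline-notation permutation $\mindbod{M_1,\ldots,M_n}{B_1,\ldots,B_n}$ from the definition (and it is consistent with $\id=\mindbod{1,\ldots,n}{1,\ldots,n}$ being the identity). With this reading fixed, the two claimed identities become statements about function composition, and I would prove each by evaluating both sides on an arbitrary mind $k$ and invoking the left-to-right convention $(\sigma\cdot\tau)(i)=\tau(\sigma(i))$ established in the introduction.

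First I would treat the body-swap identity $A \bact (x,y) = A \cdot (x,y)$. By definition $A \bact (x,y)$ leaves the minds in place and interchanges the labels $x$ and $y$ among the bodies, so the new body of mind $k$ is $A(k)$ unless $A(k)\in\{x,y\}$, in which case it is switched to the other of the two. Thus the map $A$ is post-composed with the transposition $(x,y)$. Under the left-to-right convention, composition on the outputs is multiplication on the right, so $(A \bact (x,y))(k) = (x,y)(A(k)) = (A \cdot (x,y))(k)$ for every $k$, which gives the identity.

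Next I would treat the mind-swap identity $A \mact (x,y) = (x,y) \cdot A$. Here the bodies stay fixed while minds $x$ and $y$ trade places, so in the new assignment mind $x$ occupies the body that previously held mind $y$ and vice versa, every other mind keeping its body. Reading this off as a function, the new body of mind $k$ is $A((x,y)(k))$: for $k\notin\{x,y\}$ this is $A(k)$, for $k=x$ it is $A(y)$, and for $k=y$ it is $A(x)$. This is precomposition of $A$ by $(x,y)$, which under the left-to-right convention is multiplication on the left, so $(A \mact (x,y))(k) = A((x,y)(k)) = ((x,y)\cdot A)(k)$, as required.

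The only real obstacle is the bookkeeping of conventions: one must keep straight that $A$ reads ``mind to body'', that a body swap acts on the \emph{outputs} of $A$ while a mind swap acts on the \emph{inputs}, and that under left-to-right multiplication composition on outputs is multiplication on the right whereas composition on inputs is multiplication on the left. Once these are pinned down, each identity collapses to a one-line check on an arbitrary input, matching the numerics already recorded in Example~\ref{ex_bact_mact} and Example~\ref{ex_mindbod_seq} (for instance $\id \bact (3,4) = \mindbod{1,2,3,4}{1,2,4,3}$, which as a permutation is exactly $\id \cdot (3,4)=(3,4)$). I would display the two evaluations side by side so that the duality between the mind- and body-swapping operations is visible at a glance.
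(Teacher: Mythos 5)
Your proof is correct, and it is exactly the verification the paper leaves implicit: the paper states this lemma with no proof at all ("Immediately from the definitions..."), and your pointwise check — reading $A$ as the mind-to-body permutation and noting that a body swap post-composes (right multiplication, under the left-to-right convention) while a mind swap pre-composes (left multiplication) — is precisely the omitted argument.
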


\noindent
Also, note that if we start with the identity Mind-Body Assignment \id, then applying the Mind-Swapping Operation with transposition sequence \ps{s}, changes the top in some way, while the Body-Swapping Operation using \ps{s} does exactly the same thing to the bottom, so we have the following.

\begin{lem}
\label{lem_mact_inverse_bact}
If \ps{s} is a transposition sequence, then
$\id \mact \ps{s} = (\id \bact \ps{s})^{-1}$.
\end{lem}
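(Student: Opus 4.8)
The plan is to reduce both operations to ordinary group multiplication using Lemma~\ref{lem_acts_are_products}, and then compare the two resulting products. Writing $\ps{s} = \seq{\per{s_1}, \ldots, \per{s_m}}$, I would first unwind the definition of $\id \bact \ps{s}$: applying the body-swapping clause of Lemma~\ref{lem_acts_are_products} one transposition at a time, each step right-multiplies by the next transposition, so starting from $\id$ we get $\id \bact \ps{s} = \per{s_1} \cdot \per{s_2} \cdots \per{s_m} = \product(\ps{s})$.

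Next I would do the same for $\id \mact \ps{s}$. Here the mind-swapping clause of Lemma~\ref{lem_acts_are_products} left-multiplies by each transposition, so applying them in the order $\per{s_1}, \ldots, \per{s_m}$ yields $\id \mact \ps{s} = \per{s_m} \cdots \per{s_2} \cdot \per{s_1}$, that is, the same factors but in the reversed order. Finally I would take the inverse of the body product and invoke the two standard facts that $(ab)^{-1} = b^{-1}a^{-1}$ and that every transposition is its own inverse: this gives $(\id \bact \ps{s})^{-1} = (\per{s_1} \cdots \per{s_m})^{-1} = \per{s_m}^{-1} \cdots \per{s_1}^{-1} = \per{s_m} \cdots \per{s_1}$, which is exactly the expression already computed for $\id \mact \ps{s}$.

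An equivalent and arguably cleaner route is induction on the length $m$. The empty sequence gives $\id = \id^{-1}$. For the inductive step, extending a sequence by one transposition $\per{t}$ right-multiplies the body side by $\per{t}$ while left-multiplying the mind side by $\per{t}$; so if $\id \mact \ps{s} = (\id \bact \ps{s})^{-1}$ for the shorter sequence, then setting $P = \id \bact \ps{s}$ we need $\per{t} \cdot P^{-1} = (P \cdot \per{t})^{-1} = \per{t}^{-1} \cdot P^{-1}$, which holds because $\per{t}^{-1} = \per{t}$.

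The computation is routine; the only genuine place to slip is bookkeeping with the left-to-right multiplication convention, and making sure that the order-reversal appearing in the mind product is precisely the order-reversal produced by inverting the body product. I expect this order bookkeeping to be the main (and quite minor) obstacle, since everything else follows immediately from Lemma~\ref{lem_acts_are_products} and the involutivity of transpositions.
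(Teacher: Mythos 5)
Your proposal is correct and is essentially the paper's own justification: the paper states that the lemma "is an immediate consequence of repeated application of Lemma~\ref{lem_acts_are_products}," and both of your arguments (the direct computation giving $\id \bact \ps{s} = \per{s_1}\cdots\per{s_m}$ versus $\id \mact \ps{s} = \per{s_m}\cdots\per{s_1}$, and the induction on length) simply carry out that computation in detail, using that transpositions are involutions. The paper also offers an even more immediate informal argument -- mind-swapping on $\id$ alters the top row exactly as body-swapping alters the bottom row, and flipping the two rows of a Mind-Body Assignment is inversion -- but this is the same observation in different clothing.
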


\noindent
Example~\ref{ex_bact_mact} exhibits the last lemma.
The last lemma is also an immediate consequence of repeated application of Lemma~\ref{lem_acts_are_products}.

We now define a notion of dual, which converts a sequence of body switches into a corresponding sequence of mind switches.

\newcommand{\mb}{\formal{MB}}

\begin{defn}
\label{def_mind_body_duals} \

\begin{itemize}

\item
Given a Mind-Body Assignment $A$, and a transposition $\per{s} = (x,y)$, we define the \dword{Mind-Body Dual} of \per{s} in $A$, denoted $\mb_A (\per{s})$ to be the transposition $(x',y')$, where in $A$, $x'$ is above $x$ and $y'$ is above $y$.

\item
Suppose $\ps{s} =\seq{\per{s_1}, \ldots, \per{s_m}}$ is a transposition sequence
and \seq{A_0, A_1, \ldots, A_m} is its Corresponding Mind-Body Sequence.
The \dword{Mind-Body Dual} of \ps{s}  is the transposition sequence $\ps{s'} =\seq{\per{s'_1}, \ldots, \per{s'_m}}$, where 
$\per{s'_k} = \mb_{A_{k-1}}(\per{s_k})$.
We say that the Mind-Body Dual of \per{s_k} in \ps{s} is \per{s'_k}.

\end{itemize}

\end{defn}

\begin{exm} \label{ex_dual}
To calculate the Mind-Body Dual \seq{\per{s'_1}, \ldots, \per{s'_5}} of \\
\seq{(3,4), (1,3), (1,2), (3,4), (2,3)}, 
the transposition sequence of Example~\ref{ex_trans_seq}, we use its
Corresponding Mind-Body Sequence from Example~\ref{ex_mindbod_seq}.

\begin{itemize}

\item Above $(3,4)$ in $A_0$ is $(3,4)$ so $\per{s'_1} = (3,4)$.

\item Above $(1,3)$ in $A_1$ is $(1,4)$ so $\per{s'_2} = (1,4)$.

\item Above $(1,2)$ in $A_2$ is $(2,4)$ so $\per{s'_3} = (2,4)$.

\item Above $(3,4)$ in $A_3$ is $(1,3)$ so $\per{s'_4} = (1,3)$.

\item Above $(2,3)$ in $A_4$ is $(3,4)$ so $\per{s'_5} = (3,4)$.

\end{itemize}
So the Mind-Body Dual is \seq{(3,4), (1,4), (2,4), (1,3), (3,4)}.

\end{exm}

If we view a transposition sequence \ps{s} as a sequence of body-swapping instructions, then \ps{s} describes the bodies we would see entering The Machine.  From the definition of Mind-Body Dual, we can see that \ps{s'} describes the sequence of corresponding mind switches.  We make this idea precise in the next lemma, then after its proof, we give a more detailed interpretation.
\begin{lem}
\label{lem_actions_and_dual}

For any transposition sequence \ps{s} and Mind-Body Assignment $A$, we have:
$A \bact \ps{s} =  A \mact \ps{s'}$ \ and \
$A \mact \ps{s} =  A \bact \ps{s'}$.

\end{lem}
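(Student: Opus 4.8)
The plan is to prove the two identities by induction on the length $m$ of the transposition sequence $\ps{s}$, reducing everything to the single-transposition case via the definition of the Mind-Body Dual and Lemma~\ref{lem_acts_are_products}. By symmetry (the roles of minds and bodies are interchangeable), it suffices to establish the first identity $A \bact \ps{s} = A \mact \ps{s'}$; the second follows by swapping the words ``mind'' and ``body'' throughout, or equivalently by applying the first identity to the dual sequence after checking that dualizing is an involution in the appropriate sense.

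The base case is a single transposition $\per{s} = (x,y)$. Here $\ps{s'} = \seq{\mb_A(\per{s})}$, where $\mb_A(\per{s}) = (x',y')$ with $x'$ above $x$ and $y'$ above $y$ in $A$. I would verify directly that $A \bact (x,y)$ and $A \mact (x',y')$ produce the same Mind-Body Assignment: the body-swap exchanges the columns sitting below $x$ and $y$, which moves the mind $x'$ to sit above body $y$ and mind $y'$ above body $x$; the mind-swap $(x',y')$ achieves precisely the same reassignment since $x', y'$ are by definition the minds currently paired with bodies $x, y$. Using Lemma~\ref{lem_acts_are_products}, this is just the algebraic statement $A \cdot (x,y) = (x',y') \cdot A$, i.e. $(x',y') = A (x,y) A^{-1}$, which is the standard conjugation formula for how a transposition transforms when read through the permutation $A$; this is clean to check coordinatewise.

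For the inductive step, write $\ps{s} = \seq{\per{s_1}, \ldots, \per{s_m}}$ and let $A_1 = A \bact \per{s_1}$. The dual $\ps{s'}$ is built so that $\per{s'_1} = \mb_A(\per{s_1})$ and the remaining duals $\seq{\per{s'_2}, \ldots, \per{s'_m}}$ are exactly the Mind-Body Dual of $\seq{\per{s_2}, \ldots, \per{s_m}}$ computed starting from $A_1$ (this is forced by the definition, since the Corresponding Mind-Body Sequence of the tail, started at $A_1$, coincides with the tail of the original sequence's Corresponding Mind-Body Sequence). I would then compute
\[
A \bact \ps{s} = (A \bact \per{s_1}) \bact \seq{\per{s_2}, \ldots, \per{s_m}} = A_1 \bact \seq{\per{s_2}, \ldots, \per{s_m}},
\]
apply the inductive hypothesis to the shorter sequence at $A_1$ to rewrite this as $A_1 \mact \seq{\per{s'_2}, \ldots, \per{s'_m}}$, and finally use the base case $A \bact \per{s_1} = A \mact \per{s'_1}$, i.e. $A_1 = A \mact \per{s'_1}$, to fold the first step back in and obtain $A \mact \ps{s'}$.

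**The main obstacle I anticipate is bookkeeping rather than conceptual:** making precise the claim that the dual of the tail, computed from the intermediate assignment $A_1$, agrees with the tail of the dual of the whole sequence. This requires checking that the Corresponding Mind-Body Sequence is ``prefix-compatible'' — that the assignments $A_1, \ldots, A_m$ appearing when one starts the whole computation at $A_0 = A$ are the same assignments one sees when restarting the tail computation at $A_1$. This is immediate from the recursive definition $A_k = A_{k-1} \bact \per{s_k}$, but it is the step where one must be careful not to conflate the dual computed relative to different base assignments. Everything else is a routine unwinding of definitions together with the conjugation identity from the base case.
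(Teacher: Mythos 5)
Your proof is correct, and it runs on the same engine as the paper's: the single-swap identity $X \bact \per{t} = X \mact \mb_X(\per{t})$, which is exactly your conjugation computation $A \cdot (x,y) = (x',y')\cdot A$ via Lemma~\ref{lem_acts_are_products}. The organization differs, though. The paper fixes the base assignment $A$ once and for all and inducts on prefixes, appending $\per{s_k}$ at the back and applying the identity at the intermediate assignment $B = A \bact \seq{\per{s_1}, \ldots, \per{s_{k-1}}}$; you instead peel $\per{s_1}$ off the front and apply the inductive hypothesis to the tail at the re-based assignment $A_1 = A \bact \per{s_1}$, so your induction genuinely needs the statement's universal quantification over $A$, while the paper's does not. (One caution: your alternative derivation of the second identity from the first via the involution would be circular at this point in the paper, since Lemma~\ref{lemDualTwice} is itself proved from this lemma; the mind/body word-swap is the safe route.)

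The issue you dismiss as bookkeeping is in fact the only delicate point, and you handle it more explicitly than the paper does. Definition~\ref{def_mind_body_duals} computes $\ps{s'}$ from the Corresponding Mind-Body Sequence started at $A_0 = \id$, not at $A$. Read literally with that definition, the lemma is false for general $A$: take $\ps{s} = \seq{(1,3)}$, so that $\ps{s'} = \seq{(1,3)}$, and let $A$ be the assignment with mind $1$ in body $2$ and mind $2$ in body $1$; then by Lemma~\ref{lem_acts_are_products}, $A \bact \ps{s} = A \cdot (1,3)$ while $A \mact \ps{s'} = (1,3) \cdot A$, and these differ because $(1,2)$ and $(1,3)$ do not commute. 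What is true---and what both your induction and the paper's actually prove---is the relativized statement in which $\ps{s'}$ is computed from the Mind-Body Sequence started at $A$ (your $\per{s'_1} = \mb_A(\per{s_1})$, with the tail dual taken from $A_1$); this agrees with Definition~\ref{def_mind_body_duals} precisely when $A = \id$, which is the only case the paper ever uses (e.g., in Lemmas~\ref{lemDualTwice} and~\ref{lem_inverse}). The paper's own proof makes the same identification silently when it replaces $\mb_B(\per{s_k})$ by $\per{s'_k}$ in its final line; you state the needed compatibility of the tail's dual with the re-based assignment outright. So: correct proof, same core idea, differently structured induction, and the step you flagged as a mere obstacle is precisely where the stated lemma needs either the restriction $A = \id$ or your relativized reading of the dual.
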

\begin{proof}

We only prove the first equality, as the proof of the 
second is completely analogous.
The proof is essentially repeated application of the following \emph{observation}:
\begin{quote}
For any Mind-Body Assignment $X$ and transposition \per{t}, we have that
$X \bact \per{t} = X \mact \mb_{X}(\per{t})$.

\end{quote}

\noindent
Let $\per{s} =\seq{\per{s_1}, \ldots, \per{s_m}}$.  We proceed by induction, showing that for $k$ up to $m$, 
$A \bact \seq{\per{s_1}, \ldots, \per{s_k}} =  A \mact \seq{\per{s'_1}, \ldots, \per{s'_k}}$. Consider the inductive step,
where $B = A \bact \seq{\per{s_1}, \ldots, \per{s_{k-1}}}$.

\begin{alignat*}{2}
A \bact \seq{\per{s_1}, \ldots, \per{s_k}} 
    & = A \bact \seq{\per{s_1}, \ldots, \per{s_{k-1}}} \bact \per{s_k} && \text{} \\
    & =  A \bact \seq{\per{s_1}, \ldots, \per{s_{k-1}}} \mact \mb_{B}(\per{s_k}) && \text{, by above observation} \\		
    & = A \mact \seq{\per{s'_1}, \ldots, \per{s'_{k-1}}} \mact \mb_{B}(\per{s_k}) && \text{, by inductive hypothesis} \\ 
		& = A \mact \seq{\per{s'_1}, \ldots, \per{s'_k}} && \text{ }
\end{alignat*}
\end{proof}

We can now give a nice interpretation of the Mind-Body Dual in terms of The Machine.  The Mind-Body Assignment \id \ represents the original situation where everyone has their own mind.  A transposition sequence 
$\ps{s} = \seq{\per{s_1}, \ldots, \per{s_m}}$ can be viewed as a sequence of instructions for who (i.e. which bodies) go into The Machine.  By definition, the Mind-Body Dual $\ps{s'} = \seq{\per{s'_1}, \ldots, \per{s'_m}}$ describes exactly the same sequence of instructions but indicating which minds step into The Machine at each step.  In particular, if $\per{s_k} = (x, y)$ and $\per{s_k'} = (x',y')$, that means that the $k^{th}$ swap is between $x$ and $y$ when viewed from the bodies' point of view, and between $x'$ and $y'$ when viewed from the minds' point of view.
In imagining the operation of The Machine, the sequence of instructions of \ps{s} is clearly \emph{visible} by watching which bodies step into The Machine.  We do not (unless we had a special mind-detecting device) see which minds step into The Machine, so the Mind-Body Dual \ps{s'} reveals the process from the point of view of the \emph{non-visible} minds.  Lemma~\ref{lem_actions_and_dual} proves that the invisible sequence of 
mind swaps given by the instructions of \ps{s'} accomplishes exactly the same result
as the visible sequence of body swaps given by the instructions of \ps{s}.

As we would hope, the next lemma states that the notion of duality is idempotent
(an alternative algebraic proof of this lemma is given at the end of section~\ref{sec_alg}).
\begin{lem} \label{lemDualTwice}
For any transposition sequence \per{s} we have $\per{s''} = \per{s}$.
\end{lem}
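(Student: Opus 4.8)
The plan is to express the double dual as two successive applications of the map $\mb$ taken along \emph{inverse} Mind-Body Assignments, and then observe that these two applications cancel. Write $\ps{s} = \seq{\per{s_1}, \ldots, \per{s_m}}$, let $\seq{A_0, \ldots, A_m}$ be its Corresponding Mind-Body Sequence, let $\ps{s'} = \seq{\per{s'_1}, \ldots, \per{s'_m}}$ be its Mind-Body Dual, and let $\seq{A'_0, \ldots, A'_m}$ be the Corresponding Mind-Body Sequence of $\ps{s'}$. By definition $\per{s''_k} = \mb_{A'_{k-1}}(\per{s'_k})$ and $\per{s'_k} = \mb_{A_{k-1}}(\per{s_k})$, so everything reduces to understanding how $A'_{k-1}$ relates to $A_{k-1}$.

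The first and central step is to prove that $A'_k = A_k^{-1}$ for every $k$. First I would note that the Mind-Body Dual is computed coordinate-by-coordinate from the assignments $A_0, \ldots, A_{m-1}$, so the dual of the initial segment $\seq{\per{s_1}, \ldots, \per{s_k}}$ is exactly $\seq{\per{s'_1}, \ldots, \per{s'_k}}$, and its Corresponding Mind-Body Sequence is the initial segment $\seq{A_0, \ldots, A_k}$. Applying Lemma~\ref{lem_actions_and_dual} to this initial segment gives
\[
A_k = \id \bact \seq{\per{s_1}, \ldots, \per{s_k}} = \id \mact \seq{\per{s'_1}, \ldots, \per{s'_k}},
\]
and then Lemma~\ref{lem_mact_inverse_bact}, applied to $\seq{\per{s'_1}, \ldots, \per{s'_k}}$, rewrites the right-hand side as $\bigl(\id \bact \seq{\per{s'_1}, \ldots, \per{s'_k}}\bigr)^{-1} = (A'_k)^{-1}$. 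Hence $A_k = (A'_k)^{-1}$, i.e.\ $A'_k = A_k^{-1}$.

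The second ingredient is that the relation ``above'' reverses under inversion of an assignment. Since a Mind-Body Assignment is just the permutation carrying minds to bodies, forming $A^{-1}$ literally interchanges the top and bottom rows; thus $x'$ is above $x$ in $A$ if and only if $x$ is above $x'$ in $A^{-1}$. Concretely $\mb_A((x,y)) = (A^{-1}(x), A^{-1}(y))$, so $\mb_{A^{-1}}$ is the inverse map to $\mb_A$ on transpositions. Combining the two ingredients with $A'_{k-1} = A_{k-1}^{-1}$ yields
\[
\per{s''_k} = \mb_{A'_{k-1}}(\per{s'_k}) = \mb_{A_{k-1}^{-1}}\bigl(\mb_{A_{k-1}}(\per{s_k})\bigr) = \per{s_k},
\]
which holds for every $k$ and gives $\per{s''} = \per{s}$.

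I expect the main obstacle to be the first step, the identification $A'_k = A_k^{-1}$: it requires the small but essential observation that the dual of an initial segment is the corresponding initial segment of the dual (so that Lemma~\ref{lem_actions_and_dual} may be invoked at each stage rather than only for the full sequence), together with a careful pairing of Lemma~\ref{lem_actions_and_dual} and Lemma~\ref{lem_mact_inverse_bact}. Once that relation is in hand, the symmetry of ``above'' under inversion is immediate, and the cancellation is purely formal.
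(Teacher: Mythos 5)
Your proof is correct, but it takes a genuinely different route from the paper's. The paper proves Lemma~\ref{lemDualTwice} by induction on the length $m$: assuming the first $m-1$ entries of $\ps{s''}$ agree with those of $\ps{s}$, it sets $A = \id \bact \seq{\per{s_1}, \ldots, \per{s_{m-1}}}$, applies Lemma~\ref{lem_actions_and_dual} twice to obtain $\id \bact \ps{s} = \id \mact \ps{s'} = \id \bact \ps{s''}$, hence $A \bact \per{s_m} = A \bact \per{s''_m}$, and then cancels (transposition actions are faithful, by Lemma~\ref{lem_acts_are_products}) to conclude $\per{s_m} = \per{s''_m}$. You avoid induction on the final entry altogether: you establish the sharper structural identity $A'_k = A_k^{-1}$ --- the Corresponding Mind-Body Sequence of the dual is the pointwise inverse of the original's --- by pairing Lemma~\ref{lem_actions_and_dual} with Lemma~\ref{lem_mact_inverse_bact} on initial segments, and then note that $\mb_A((x,y)) = (A^{-1}(x), A^{-1}(y))$, so that $\mb_{A^{-1}}$ inverts $\mb_{A}$ and all coordinates cancel simultaneously. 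Both arguments quietly depend on the same compatibility fact that you rightly single out as the crux: the dual of an initial segment is the corresponding initial segment of the dual (the paper uses this silently when it invokes its inductive hypothesis inside the full sequence's double dual). What your route buys is the intermediate identity $A'_k = A_k^{-1}$, an interpretable fact in its own right (minds and bodies exchange roles under duality) that the paper never states explicitly; what the paper's route buys is economy, since it needs neither the explicit formula for $\mb_A$ nor Lemma~\ref{lem_mact_inverse_bact}. Your argument is also distinct from the paper's second, algebraic proof of this lemma given at the end of Section~\ref{sec_alg} via Theorem~\ref{thm_algchar}.
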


\begin{proof}

We proceed by induction on the length of the transposition sequence.  For the inductive step, suppose we want to show the claim for  
$\ps{s} = \seq{\per{s_1}, \ldots, \per{s_m}}$, inductively assuming that
$\seq{\per{s_1}, \ldots, \per{s_{m-1}}} =
\seq{\per{s''_1}, \ldots, \per{s''_{m-1}}}$.
Let $A = \id \bact \seq{\per{s_1}, \ldots, \per{s_{m-1}}}
= \id \bact \seq{\per{s''_1}, \ldots, \per{s''_{m-1}}}$.
By two applications of Lemma~\ref{lem_actions_and_dual},
$A \bact \per{s_m} = A \bact \per{s''_m}$.
Since \per{s_m} and \per{s''_m} are just transpositions,
having $A \bact \per{s_m} = A \bact \per{s''_m}$, implies 
$\per{s_m} = \per{s''_m}$.
Thus 
$\seq{\per{s_1}, \ldots, \per{s_{m-1}, \per{s_m}}} =
\seq{\per{s''_1}, \ldots, \per{s''_{m-1}}, \per{s''_m}}$.
\end{proof}

Note that the product of the permutation sequence from Example~\ref{ex_trans_seq} is
$(4,3,2,1)$, while the product of its Mind-Body Dual, from Example~\ref{ex_dual},
is the inverse $(1,2,3,4)$.  The next lemma points out that this is always the case.
\begin{lem} \label{lem_inverse} For any transposition sequence \ps{s},
\product(\ps{s}) and \product(\ps{s'}) are inverses. 
\end{lem}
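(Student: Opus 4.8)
The plan is to reduce the claim to the identity $\product(\ps{s'}) = \product(\ps{s})^{-1}$ and then to prove that identity by exhibiting $\product(\ps{s'})$ directly via the two operations on the identity Mind-Body Assignment. The key tool is Lemma~\ref{lem_mact_inverse_bact}, which already says $\id \mact \ps{s} = (\id \bact \ps{s})^{-1}$, so everything will come down to relating $\product(\ps{s})$ and $\product(\ps{s'})$ to the Assignments $\id \bact \ps{s}$ and $\id \mact \ps{s}$.

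First I would express products as Mind-Body Assignments. Applying Lemma~\ref{lem_acts_are_products} repeatedly (to $A = \id$) gives $\id \bact \ps{s} = \id \cdot \per{s_1} \cdot \per{s_2} \cdots \per{s_m} = \product(\ps{s})$, reading the Assignment as a permutation in inline notation; similarly $\id \mact \ps{s} = \per{s_m} \cdots \per{s_1}$. So the bottom row of $\id \bact \ps{s}$ literally records $\product(\ps{s})$. Next I would apply the same computation to the dual sequence: $\id \bact \ps{s'} = \product(\ps{s'})$.

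Now I would invoke Lemma~\ref{lem_actions_and_dual} with $A = \id$, which gives $\id \mact \ps{s} = \id \bact \ps{s'}$. Chaining the above, $\product(\ps{s'}) = \id \bact \ps{s'} = \id \mact \ps{s} = (\id \bact \ps{s})^{-1} = \product(\ps{s})^{-1}$, where the middle equality is exactly Lemma~\ref{lem_mact_inverse_bact}. This is the whole argument: the dual converts body swaps into mind swaps, and on $\id$ a sequence of mind swaps produces the inverse permutation of the same sequence of body swaps, which is precisely Lemma~\ref{lem_mact_inverse_bact}.

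The only delicate point — and the step I would check most carefully — is the bookkeeping that turns the operator expressions into actual products, i.e.\ confirming that $\id \bact \ps{s}$ and $\id \bact \ps{s'}$ really equal $\product(\ps{s})$ and $\product(\ps{s'})$ under the inline reading of Mind-Body Assignments as permutations. This requires being consistent about the left-to-right multiplication convention and about how a permutation in inline/two-line notation is identified with the product $\per{s_1}\cdots\per{s_m}$; once that identification is pinned down by Lemma~\ref{lem_acts_are_products}, the rest is a short substitution. I do not expect any serious obstacle beyond getting this translation right, since the substantive content is already packaged in Lemmas~\ref{lem_mact_inverse_bact} and~\ref{lem_actions_and_dual}.
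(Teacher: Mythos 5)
Your proposal is correct and is essentially the paper's own proof: both arguments chain the same three lemmas (Lemma~\ref{lem_acts_are_products} to identify $\id \bact \ps{s}$ and $\id \bact \ps{s'}$ with the products, Lemma~\ref{lem_actions_and_dual} at $A = \id$, and Lemma~\ref{lem_mact_inverse_bact}). The only difference is cosmetic — the paper runs the chain starting from $\product(\ps{s})$ using the first equality of Lemma~\ref{lem_actions_and_dual}, while you start from $\product(\ps{s'})$ using the second.
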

\begin{proof}

\begin{alignat*}{2}
\product(\ps{s}) & = \id \bact \ps{s} && \text{, by Lemma~\ref{lem_acts_are_products}} \\ 
   & =  \id \mact \ps{s'} && \text{, by Lemma~\ref{lem_actions_and_dual}} \\ 
   & =  (\id \bact \ps{s'})^{-1} && \text{, by Lemma~\ref{lem_mact_inverse_bact}} \\
	 & = \product(\ps{s'})^{-1} && \text{, by Lemma~\ref{lem_acts_are_products}} 
\end{alignat*}
\end{proof}

\section{Graphs and Trails}
\label{sec_graphs}

We now interpret transposition sequences as labeled graphs, focusing on particularly significant \emph{trails} on these graphs.
For us, a \dword{graph}  will always mean a finite, loop-less graph, with multi-edges allowed.
We will occasionally refer to \emph{unlabeled} graphs, however we will usually work with
\dword{labeled graphs}, by which we always mean a graph whose $n$ vertices are labeled by $[n]$ (each vertex receiving a distinct label),
and whose $m$ edges are labeled by $[m]$ (each edge receiving a distinct label).
As is commonly done (for example, in \cite{GouldenYong}) we view a transposition sequence (over $\sym_n$)  $\per{s} = \seq{\per{s_1}, \ldots, \per{s_m}}$
as a labeled graph with vertex set $[n]$, with $m$ edges: For each transposition $\per{s_i} = (x_i, y_i)$ we create a corresponding edge between $x_i$ and $y_i$, labeled by $i$.  By the \dword{product} of a labeled graph, we mean the product of its 
associated transposition sequence.
Figure~\ref{graphOfTranSeq} displays the transposition sequence of Example~\ref{ex_trans_seq} as a graph.

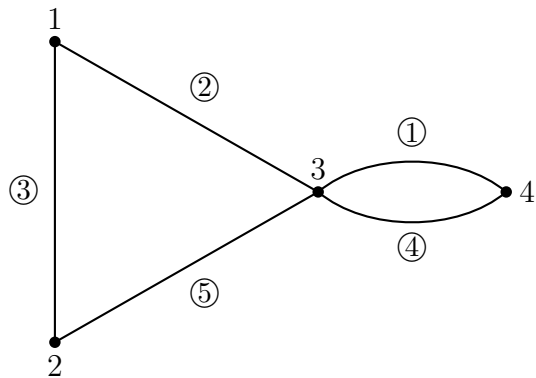
\begin{figure}
\begin{center}
\begin{pspicture}(-.3,-.5)(6.5,4.5)
\rput(0,4){\rnode{n1}{\psdot[dotscale=1.2](0,0)}}  
\uput[90](0,4){$1$}
\rput(0,0){\rnode{n2}{\psdot[dotscale=1.2](0,0)}}  
\uput[-90](0,0){$2$}
\rput(3.5,2){\rnode{n3}{\psdot[dotscale=1.2](0,0)}}  
\uput[90](3.5,2){$3$}
\rput(6,2){\rnode{n4}{\psdot[dotscale=1.2](0,0)}}  
\uput[0](6,2){$4$}
\ncline{n1}{n2}
\nbput{\raisebox{.5pt}{\textcircled{\raisebox{-.9pt} {\small $3$}}}}
\ncline{n1}{n3}
\naput{\raisebox{.5pt}{\textcircled{\raisebox{-.9pt} {\small $2$}}}}
\ncline{n2}{n3}
\nbput{\raisebox{.5pt}{\textcircled{\raisebox{-.9pt} {\small $5$}}}}
\ncarc[arcangle=40]{n3}{n4}
\naput{\raisebox{.5pt}{\textcircled{\raisebox{-.9pt} {\small $1$}}}}
\ncarc[arcangle=40]{n4}{n3}
\naput{\raisebox{.5pt}{\textcircled{\raisebox{-.9pt} {\small $4$}}}}
\end{pspicture}
\end{center}
\caption{The Transposition Sequence From Example~\ref{ex_trans_seq} Written As A Labeled Graph.}
\label{graphOfTranSeq}
\end{figure}

Following usual definitions, we define a \dword{trail} to be any meandering through the graph, with no restrictions except that an edge cannot be repeated; i.e. a
trail is a non-empty sequence \walk{v_1, e_1, v_2, e_2, \dots, v_{k-1}, e_{k-1}, v_k} such that each $v_i$ is a vertex, each $e_i = \{v_i, v_{i+1}\}$ is an edge, and no edge is repeated.
A \dword{trivial trail} is a trail which just consists of a vertex, and no edges.
Notice that trails are ordered with a \emph{start vertex} of $v_1$ and \emph{end vertex} of $v_k$.

We begin with the trails in labeled graphs which are of fundamental interest, namely those
whose edge labels greedily increase as little as possible.

\begin{defn}
Consider a labeled graph \graph{G} and any vertex $u$.
The 
\dword{Minimal Increasing Greedy Trail} (\migt) starting at $u$ is the trail starting at $u$,
always taking the smallest edge that is larger than any previous edges used.
The trail ends on the vertex from which it cannot move on.
This (unique) trail is referred to by $\wk^{(\graph{G})}_u$, where we just write
$\wk_u$ if \graph{G} is apparent from context.

\end{defn}

\begin{exm} \label{ex_migt}
Referring to the graph in Figure~\ref{graphOfTranSeq}, $\wk_3$ is the following trail:
$
\seq{3, \edge{1}, 4, \edge{4}, 3, \edge{5}, 2}
$, where we write \edge{e} to refer to the unique edge with label $e$.
\end{exm}

The fact that we have a trail from $x$ to $y$ tells us that the permutation takes $x$ to $y$ and furthermore gives us the ``trajectory'' taken by the element $x$ to arrive at $y$.
We make this simple, but interesting point precise.
It will be useful to refer to the \dword{contraction} of a sequence
\seq{a_1, \ldots, a_m} as the sequence arrived at by replacing any maximal subsequence
of consecutive entries $x_i, x_{i+1}, \ldots, x_j$ such that 
$x_i = x_{i+1} = \cdots = x_j$ by $x_i$; for example, the contraction of 
\seq{3,4,4,4,3,2,2} is \seq{3,4,3,2}.

\begin{defn}
Suppose $\ps{s} = \seq{\per{s_1}, \ldots, \per{s_m}}$ is a transposition sequence over 
$\sym_n$, and $x \in [n]$. 
The \dword{trajectory} of $x$ in \ps{s} 
is the contraction of \seq{x_0,x_1, \ldots, x_m}, where
$x_0 = x$ and for $k = 1, \ldots, m$, 
$\product_k = \product(\seq{\per{s_1}, \ldots, \per{s_k}})$, and 
$x_k$ is the result of applying the permutation $\product_k$ to $x$.  

\end{defn}

\begin{exm} \label{ex_traj}
Recall the transposition sequence from Example~\ref{ex_trans_seq},
$\ps{s} = \seq{(3,4), (1,3), (1,2), (3,4), (2,3)}$.
The trajectory of $3$ in \ps{s} is \seq{3, 4, 3, 2}.

\end{exm}
\noindent
We discuss two ways to understand trajectories: via trails in graphs and via the mind-body interpretation.

To view trajectories as trails, consider a transposition sequence 
$\ps{s} = \seq{\per{s_1}, \ldots, \per{s_m}}$ and some $x \in [n]$.
Consider the unique maximum length subsequence \seq{\per{s_{i_1}}, \ldots, \per{s_{i_k}}} such
that $\per{s_{i_1}} = (x, x_1), \per{s_{i_2}} = (x_1, x_2), \ldots, \per{s_{i_k}} = (x_{k-1}, x_k)$.
The trajectory of $x$ must then be \seq{x, x_1, \ldots, x_k}. In reference to that
last example, note that the subsequence corresponding to $3$ is 
\seq{(3,4), (4,3), (3,2)}, which means that $\wk_3$ will start at $3$, then 
go to $4$, then go back to $3$, and finally go to $2$. 
This discussion illustrates the following general fact.

\begin{lem} \label{lem_trail_trajectory}
Given any transposition sequence \ps{s} over $\sym_n$ and any $x \in [n]$,
the sequence of vertices in $\wk_x$ (in that order) is exactly the same as the  trajectory of $x$ in \ps{s}.
In particular, \product(\ps{s}) maps $x$ to $y$ if and only if the \migt \ starting at $x$ ends at $y$.

\end{lem}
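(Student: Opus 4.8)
The plan is to establish the claimed identification by directly comparing the sequence of vertices visited by the \migt\ $\wk_x$ with the trajectory of $x$ in \ps{s}, exploiting the fact that both are governed by the same underlying subsequence of transpositions. The key observation, already foreshadowed in the discussion preceding the lemma, is that the edges used by $\wk_x$ are precisely the transpositions that move the element currently sitting where $x$ has traveled. First I would set up the following correspondence: starting at vertex $x$, the \migt\ takes the smallest-labeled edge incident to $x$, say $\per{s_{i_1}} = (x, x_1)$; since this is the first edge (i.e. the smallest-indexed transposition) that touches $x$, applying the partial products $\product_k$ shows that $x$ stays fixed until step $i_1$, at which point it moves to $x_1$. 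Thus the first nontrivial entry of the trajectory is $x_1$, matching the second vertex of $\wk_x$.

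The core of the argument is an induction that makes this precise. I would prove by induction on $k$ that if $\wk_x$ visits the vertices $x = v_0, v_1, \ldots, v_k$ via edges with strictly increasing labels $i_1 < i_2 < \cdots < i_k$, then the trajectory of $x$ visits exactly the contracted sequence of these same vertices, and moreover that immediately after step $i_j$ the element $x$ resides at vertex $v_j$. The inductive step hinges on the greedy/minimal-increasing rule: after arriving at $v_j$ using edge $i_j$, the \migt\ selects the smallest edge larger than $i_j$ that is incident to $v_j$. I must argue that this is exactly the next transposition among $\per{s_{i_j+1}}, \ldots, \per{s_m}$ that moves the element currently at position $v_j$ --- equivalently, that no transposition with index strictly between $i_j$ and $i_{j+1}$ touches $v_j$, so $x$ remains parked at $v_j$ throughout that window. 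This is immediate from the definition of \migt, since any such intervening edge incident to $v_j$ would have been selected by the greedy rule instead of $i_{j+1}$.

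The termination condition requires a small separate check: $\wk_x$ ends at $v_k$ when there is no edge incident to $v_k$ with label exceeding $i_k$, and I must confirm this corresponds exactly to the element $x$ never moving again after step $i_k$, so that $v_k$ is the final entry of the trajectory and equals $\product(\ps{s})(x)$. Combining the induction with this terminal step yields that the ordered vertex sequence of $\wk_x$ equals the (already contracted) trajectory of $x$, giving the first assertion. The ``in particular'' clause then follows immediately: the final vertex $y$ of $\wk_x$ is by definition the image of $x$ under the full product $\product(\ps{s})$, which is exactly the statement that $\product(\ps{s})$ maps $x$ to $y$ precisely when $\wk_x$ ends at $y$.

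I expect the main obstacle to be bookkeeping the ``parking'' claim cleanly: verifying that between consecutive \migt\ edges the traveling element is genuinely fixed by all intervening transpositions, and that the greedy minimal-increasing selection rule coincides with ``the next transposition moving the current occupant.'' The subtlety is that other transpositions with indices in the gap $(i_j, i_{j+1})$ may well act on the graph --- they just must not be incident to the current vertex $v_j$, and this is exactly what the greedy rule guarantees. Once this equivalence between the combinatorial greedy rule and the dynamical ``follow the element'' rule is pinned down, the rest is routine induction and the contraction operation simply collapses the trivial (fixed) steps that the \migt\ never records in the first place.
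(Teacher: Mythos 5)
Your proof is correct and takes essentially the same approach as the paper: the paper justifies this lemma by the discussion immediately preceding it, which identifies the maximal chaining subsequence $\per{s_{i_1}} = (x, x_1), \per{s_{i_2}} = (x_1, x_2), \ldots, \per{s_{i_k}} = (x_{k-1}, x_k)$ as simultaneously describing the greedy edge choices of $\wk_x$ and the successive positions of the element $x$. Your induction, with its explicit ``parking'' observation that no transposition indexed strictly between $i_j$ and $i_{j+1}$ can touch $v_j$, is precisely a formalization of that identification of the greedy rule with the follow-the-element rule.
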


\begin{exm}
The vertices of $\wk_3$ from Example~\ref{ex_migt} form the sequence \seq{3,4,3,2},
exactly the trajectory of 3 from Example~\ref{ex_traj}.
Also $\product(\ps{s}) = (4, 3, 2, 1)$, so 3 is mapped to 2, which we also see in $\wk_3$, which starts at 3 and ends at 2.
\end{exm}

For a second understanding of trajectories,
we can interpret the trajectory of an element as the sequence of bodies through which a mind passes.  To make this explicit, we state the following lemma, essentially the
same point as Lemma~\ref{lem_trail_trajectory}.

\begin{lem}
Consider a transposition sequence over 
$\sym_n$, and its Corresponding Mind-Body Sequence \seq{A_0, A_1, \ldots, A_m}; let $x \in [n]$.  Let $b_k$ be  the body that $x$ is above in $A_k$.
The contraction of \seq{b_0, b_1, \ldots, b_m} is the trajectory of $x$.
\end{lem}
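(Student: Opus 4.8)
The plan is to connect the body-trajectory of a mind directly to the definition of trajectory via permutations, using the earlier correspondence between the Body-Swapping Operation and multiplication on the right (Lemma~\ref{lem_acts_are_products}). The key observation is that the body above which mind $x$ sits in $A_k$ is determined precisely by where the partial product $\product_k$ sends $x$. First I would recall that $A_k = \id \bact \seq{\per{s_1}, \ldots, \per{s_k}}$, and by Lemma~\ref{lem_acts_are_products} applied repeatedly (or directly, as in the proof of Lemma~\ref{lem_inverse}), $A_k = \id \cdot \product_k = \product_k$ as a permutation. Reading $A_k$ as a Mind-Body Assignment \mindbod{1, \ldots, n}{\ldots}, the permutation $\product_k$ is exactly the map sending mind $M_j$ to the body $B_j$ below it; so the body $b_k$ below mind $x$ is $\product_k(x)$. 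But $\product_k(x)$ is, by definition, the element $x_k$ appearing in the sequence whose contraction defines the trajectory of $x$.

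With that identification in hand, the proof is essentially immediate: the sequence \seq{b_0, b_1, \ldots, b_m} equals \seq{x_0, x_1, \ldots, x_m} term by term, where $x_0 = x$ (since $A_0 = \id$ places mind $x$ above body $x$) and $x_k = \product_k(x)$ for $k \geq 1$. Taking contractions of two equal sequences yields equal results, so the contraction of \seq{b_0, b_1, \ldots, b_m} is exactly the trajectory of $x$, as claimed. I would present this as a short chain: establish $b_k = \product_k(x)$, note this matches the $x_k$ in the definition of trajectory, and conclude the two contractions coincide.

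The only genuinely delicate point — the main obstacle, though a mild one — is getting the convention straight for which coordinate of the Mind-Body Assignment records ``the body below mind $x$,'' and confirming it agrees with the left-to-right product convention fixed at the start of the paper. Since multiplication is left to right and $A \bact (x,y) = A \cdot (x,y)$, the assignment $A_k$ really is the permutation $\product_k$, and the Mind-Body Assignment \mindbod{M_1, \ldots, M_n}{B_1, \ldots, B_n} encodes the map $M_j \mapsto B_j$; so the body below mind $x$ is the image $\product_k(x)$, matching the trajectory's $x_k$ with no inverse or reversal creeping in. I would double-check this against Example~\ref{ex_mindbod_seq} and Example~\ref{ex_traj}: in $A_3 = \mindbod{1,2,3,4}{3,1,4,2}$ the body below mind $3$ is $4$, and indeed the trajectory of $3$ is \seq{3,4,3,2} with third entry (after contraction the $\product_2$ and $\product_1$ stages) consistent with $\product_3(3) = 4$. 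Once the convention is pinned down, the argument is the one-line identity $b_k = \product_k(x)$ followed by taking contractions.
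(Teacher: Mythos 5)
Your proof is correct: the identification $b_k = \product_k(x)$ (via Lemma~\ref{lem_acts_are_products}, since $A_k = \id \cdot \per{s_1} \cdots \per{s_k} = \product_k$ and a Mind-Body Assignment maps each mind to the body below it) makes the two sequences equal term by term, so their contractions agree. The paper states this lemma without proof, remarking only that it is ``essentially the same point'' as Lemma~\ref{lem_trail_trajectory}; your argument is precisely the direct verification the paper leaves implicit, including the convention check (left-to-right multiplication, no inverse) that is the only place one could go wrong.
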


\begin{exm}
Consider \ps{s} from Example~\ref{ex_trans_seq} and consider mind $3$. By looking at the Corresponding Mind-Body Sequence for \ps{s} from Example~\ref{ex_mindbod_seq}, we see that mind $3$ starts in body $3$, then moves to body $4$, then back to body $3$, then to body $2$.  The sequence of bodies through which mind $3$ passes is \seq{3, 4,3,2}, exactly the trajectory of $3$ in \ps{s}, as noted in Example~\ref{ex_traj}.

\end{exm}

We now consider a generalization of the \migt{s}, in order to better understand their structure.  While slightly tangential to the main thread of this paper, we believe this short diversion is interesting, and in fact the first author has 
other work building on it.

\begin{defn}
A \dword{Trail Double Cover} of a graph is a set of trails such that:

\begin{itemize}

\item
A unique trail begins at each vertex, and

\item
Every edge of the graph is used by exactly two trails.

\end{itemize}

\end{defn}

\noindent
An example of a Trail Double Cover is the graph in Figure~\ref{migtGraph}.  
Figure~\ref{migtGraph} is in fact the graph from 
Figure~\ref{graphOfTranSeq} with its \migt{s} drawn in.

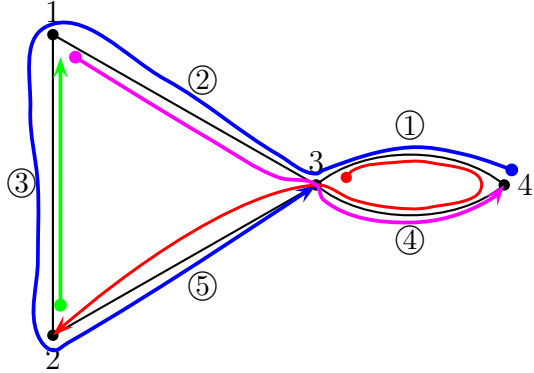
\begin{figure}
\begin{center}
\begin{pspicture}(-.3,-.5)(6.5,4.5)
\rput(0,4){\rnode{n1}{\psdot[dotscale=1.2](0,0)}}  
\uput[90](0,4){$1$}
\rput(0,0){\rnode{n2}{\psdot[dotscale=1.2](0,0)}}  
\uput[-90](0,0){$2$}
\rput(3.5,2){\rnode{n3}{\psdot[dotscale=1.2](0,0)}}  
\uput[90](3.5,2){$3$}
\rput(6,2){\rnode{n4}{\psdot[dotscale=1.2](0,0)}}  
\uput[0](6,2){$4$}
\ncline{n1}{n2}
\nbput{\raisebox{.5pt}{\textcircled{\raisebox{-.9pt} {\small $3$}}}}
\ncline{n1}{n3}
\naput{\raisebox{.5pt}{\textcircled{\raisebox{-.9pt} {\small $2$}}}}
\ncline{n2}{n3}
\nbput{\raisebox{.5pt}{\textcircled{\raisebox{-.9pt} {\small $5$}}}}
\ncarc[arcangle=40]{n3}{n4}
\naput{\raisebox{.5pt}{\textcircled{\raisebox{-.9pt} {\small $1$}}}}
\ncarc[arcangle=40]{n4}{n3}
\naput{\raisebox{.5pt}{\textcircled{\raisebox{-.9pt} {\small $4$}}}}
\pscurve[linecolor=blue,linewidth=.05,arrowsize=.2]{*->}(6.1,2.2)(5.5,2.4)(4.75,2.5)(3.6,2.2)(3.5,2.15)(3,2.4)(1.5,3.4)(-.1,4.1)(-.3,3)(-.2,2)(-.2,1)(-.2,0)(0,-.2)(.2,-.1)(3.5,2)
\pscurve[linecolor=red,linewidth=.04,arrowsize=.2]{*->}(3.9,2.1)(4,2.2)(4.55,2.3)(5,2.3)(5.7,2)(5,1.7)(4.5,1.7)(4,1.8)(3.5,2)(0,0)
\psline[linecolor=green,linewidth=.05,arrowsize=.2]{*->}(.1,.4)(.1,3.7)
\pscurve[linecolor=magenta,linewidth=.05,arrowsize=.2]{*->}(.3,3.7)(2.1,2.6)(3,2.1)(3.5,2)(3.6,1.8)(4.75,1.5)(5.8,1.8)(6,2)
\end{pspicture}
\end{center}
\caption{Graph From Figure~\ref{graphOfTranSeq} With Its \migt{s}.}
\label{migtGraph}
\end{figure}

\begin{lem} \label{lemmagreedypartition}
The set of \migt{s} of a labeled graph is a Trail Double Cover.

\end{lem}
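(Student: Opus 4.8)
The plan is to verify the two defining properties of a Trail Double Cover separately. The first property --- that a unique trail begins at each vertex --- is immediate, since by construction the set under consideration is exactly $\{\wk_u : u \text{ a vertex of } \graph{G}\}$, and $\wk_u$ is the unique \migt{} starting at $u$. Thus exactly one trail of the set begins at each vertex, and all the real work goes into the second property: that every edge is used by exactly two \migt{s}.

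My main tool will be a purely local ``transit rule'' describing how any \migt{} passes through a fixed vertex $x$. Listing the labels of the edges incident to $x$ in increasing order as $\ell_1 < \ell_2 < \cdots < \ell_d$ (so $d=\deg(x)$), I would argue that a trail arriving at $x$ along the edge labeled $\ell_j$ must leave $x$ along $\ell_{j+1}$ when $j<d$, and must stop at $x$ when $j=d$, while the one trail $\wk_x$ that starts at $x$ leaves along $\ell_1$. This holds because the labels along any trail strictly increase, so upon arrival at $x$ the largest label used so far is precisely the label of the arriving edge; the greedy rule then forces the next edge to be the smallest incident edge with a larger label, namely $\ell_{j+1}$ (or none, if $j=d$). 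The consequences I will extract are: for $i\ge 2$, a trail departs $x$ along $\ell_i$ exactly when it arrived at $x$ along $\ell_{i-1}$; and only $\wk_x$ departs $x$ along $\ell_1$.

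Next I would reduce the edge-count to a departure-count. Fix an edge $e=\{x,y\}$. Since a trail never repeats an edge, each \migt{} traverses $e$ at most once, so the number of \migt{s} using $e$ equals the number of traversals of $e$; and each traversal departs one endpoint and enters the other, whence this number is $d_x+d_y$, where $d_z$ denotes the number of trails departing the endpoint $z$ along $e$. It therefore suffices to show $d_x=1$ for every edge $e$ and every endpoint $x$. I would prove this by strong induction on the (global) label of $e$. In the case where $e$ is the smallest-labeled edge incident to $x$ (in which the recursion terminates), the transit rule shows that the only trail leaving $x$ along $e$ is $\wk_x$, so $d_x=1$; note the globally smallest edge always falls into this case, giving the base of the induction. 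Otherwise, writing $e=\ell_i$ with $i\ge 2$ in the local ordering at $x$, the transit rule puts the trails leaving $x$ along $e$ in bijection with the trails arriving at $x$ along the edge $f$ labeled $\ell_{i-1}$, i.e.\ the trails leaving the far endpoint $w$ of $f$ along $f$. Since $f$ has strictly smaller label than $e$, the induction hypothesis yields exactly one such trail, so $d_x=1$. Combining, $d_x+d_y=2$ for every edge $e$.

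The main obstacle is establishing the transit rule cleanly and, in the inductive step, correctly identifying ``trails departing $x$ along $\ell_i$'' with ``trails departing the far endpoint of $\ell_{i-1}$ along $\ell_{i-1}$,'' so that the recursion descends to a strictly smaller edge label. Both directions are needed --- existence (at least one such trail) and uniqueness (at most one) --- and I would fold them together by proving the single statement $d_x=1$ rather than two separate inequalities; the strictly decreasing edge label is exactly what guarantees the recursion bottoms out at the starting trail $\wk_x$.
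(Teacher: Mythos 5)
Your proof is correct, and it takes a genuinely different route from the paper's. The paper proceeds by induction on the number of edges: it deletes the globally smallest edge $\edge{1}=\{x,y\}$ to form $\graph{G}^{\rd}$, applies the inductive hypothesis there, and observes that the \migt{s} of \graph{G} arise from those of $\graph{G}^{\rd}$ by a single re-routing --- $\wk_x$ crosses \edge{1} and then runs exactly as $\wk^{\rd}_y$, $\wk_y$ crosses \edge{1} and then runs as $\wk^{\rd}_x$, and every other trail is unchanged --- so \edge{1} is covered exactly twice and the coverage of all remaining edges is inherited from the smaller graph. You delete nothing; instead you establish a local transit rule at each vertex (a trail arriving along the $j$-th smallest incident edge must depart along the $(j+1)$-st or stop, and only $\wk_x$ departs along the locally smallest edge), convert ``number of trails using an edge'' into ``number of departures from each endpoint,'' and then show each endpoint has exactly one departure by strong induction on the global edge label, tracing the departing trail backwards through strictly smaller labels until the recursion bottoms out at the start of some trail. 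Both arguments are sound, and the counting in yours is legitimate because a trail uses each edge at most once, so trails, traversals, and departures can be identified as you do. The paper's peeling recursion is shorter and gives a global picture of how the entire trail system changes when the extremal edge is removed. Your argument is longer but proves a sharper, orientation-refined statement (exactly one trail departs each endpoint of each edge), and your transit rule is precisely the local configuration of Figure~\ref{figActionAtVertex} that the paper only develops later, inside the proof of Theorem~\ref{thm_realize}; had the lemma been proved your way, that later proof could cite the transit rule rather than rederive it.
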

\begin{proof}

For the first requirement on being a Trail Double Cover, we note that
there is only one \migt \ trail that starts at a vertex, since there is at most
one smallest edge at a vertex.

We proceed by induction on the number of edges to prove the second requirement:
that each edge is used by exactly two trails.  Let our graph be \graph{G}.
For the inductive step, remove the edge \edge{1}, i.e. the edge labeled by 1;
call the resulting graph $\graph{G}^{\rd}$.
Suppose edge \edge{1} in graph \graph{G} consists of vertices $x$ and $y$.
By inductive hypothesis,
in $\graph{G}^{\rd}$, every edge is used by exactly two trails; the fact that the edge
labels of $\graph{G}^{\rd}$ start at $2$ instead of $1$, has no effect on the argument.
Define trails $\wk_x^{\rd} = \wk_x^{(\graph{G}^{\rd})}$ and 
$\wk_y^{\rd} = \wk_y^{(\graph{G}^{\rd})}$.
Thus in \graph{G}, $\wk_x = \wk_x^{(\graph{G})}$ starts at $x$, follows edge
\edge{1} to $y$ and then does exactly that $\wk_y^{\rd}$ does in $\graph{G}^{\rd}$.
Likewise, $\wk_y = \wk_y^{(\graph{G})}$ goes from $y$ to $x$ and then follows
$\wk_x^{\rd}$.  The rest of the trails of \graph{G} are the same as those of 
$\graph{G}^{\rd}$.  Thus \edge{1} is used exactly twice, as are the rest of the edges.
\end{proof}

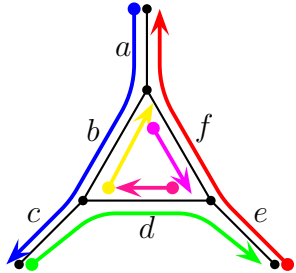
\begin{figure}
\begin{center}
{\psset{unit=1.7,arrowsize=.15}
\begin{pspicture}(0,-.5)(1,2.1)
  \rput(0,0){\rnode{n1}{\psdot(0,0)}}
  \rput(.5,0.866025403784439){\rnode{n2}{\psdot(0,0)}}    
  \rput(1,0){\rnode{n3}{\psdot(0,0)}}    
  \rput(.5,1.5){\rnode{n4}{\psdot(0,0)}}    
  \rput(-.5,-.5){\rnode{n5}{\psdot(0,0)}}    
  \rput(1.5,-.5){\rnode{n6}{\psdot(0,0)}}    
  \ncline{n1}{n2}
  \naput{$b$}
  \ncline{n2}{n3}
  \naput{$f$}
  \ncline{n1}{n3}
  \nbput{$d$}
  \ncline{n4}{n2}
  \nbput{$a$}
  \ncline{n5}{n1}
  \naput{$c$}
  \ncline{n6}{n3}
  \nbput{$e$}
  \psline[linewidth=.03,linecolor=blue,linearc=.75]{*->}(.4,1.5)(.4,0.866025403784439)(-.1,0)(-.6,-.5)
  \psline[linewidth=.03,linecolor=red,linearc=.75]{<-*}(.6,1.5)(.6,0.866025403784439)(1.1,0)(1.6,-.5)
  \psline[linewidth=.03,linecolor=green,linearc=.45]{*->}(-.4,-.5)(.1,-.1)(.9,-.1)(1.4,-.5)
  \psline[linewidth=.03,linecolor=yellow]{*->}(.2,.1)(.55,0.766025403784439)
  \psline[linewidth=.03,linecolor=magenta]{*->}(.55,0.566025403784439)(.85,.05)
  \psline[linewidth=.03,linecolor=deeppink]{*->}(.7,.1)(.25,.1)
\end{pspicture}}
\end{center}
\caption{A Trail Double Cover That Is Not Realizable.}
\label{trailSpace}
\end{figure}

\begin{defn}
A Trail Double Cover \trails{T} is \dword{realizable} if there is an edge labeling of the graph such that 
the resulting set of \migt \ trails is \trails{T}.
\end{defn}
By definition, the Trail Double Cover pictured in Figure~\ref{migtGraph} is realizable.
However the Trail Double Cover pictured in Figure~\ref{trailSpace} is 
{\bf not} realizable; this point can be checked directly, however, we will
see that this follows from Theorem~\ref{thm_realize}.
Though the Trail Double Cover of Figure~\ref{trailSpace} is not realizable, we can still see any Trail Double Cover as 
representing a permutation of its vertices: each trail maps its start vertex to its end vertex
(we understand an isolated vertex $v$ to have an associated trivial trail \seq{v} that starts and ends at $v$).
We can view a Trail Double Cover in this way because a unique trail begins at each vertex, and as the next lemma shows, a unique trails ends at each vertex.

\begin{lem}
In any Trail Double Cover, each vertex of the graph is the final vertex of a unique trail.
\end{lem}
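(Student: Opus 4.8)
The plan is to use a parity (handshake) argument at each vertex, combined with the fact that a Trail Double Cover of a graph on $n$ vertices consists of exactly $n$ trails. Indeed, by the first defining property of a Trail Double Cover there is exactly one trail per start vertex, so the number of trails equals the number $n$ of vertices; and since each trail has exactly one final vertex, the total number of (trail, end-vertex) incidences is exactly $n$. Thus if I can show that \emph{at least} one trail ends at every vertex, a counting argument will force \emph{exactly} one trail to end at each vertex. In fact I will prove something slightly stronger that makes ``at least one'' automatic: for every fixed vertex $v$ the number of trails ending at $v$ is odd.

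The key local observation is the standard Euler-trail parity fact. Fix a trail $T$ and a vertex $v$, and let $d_T(v)$ denote the number of edges of $T$ incident to $v$. Walking along $T$, every occurrence of $v$ strictly between the start and the end uses two edges at $v$, while an occurrence of $v$ as the start (respectively, end) uses exactly one edge at $v$; hence $d_T(v) \equiv [v \text{ is the start of } T] + [v \text{ is the end of } T] \pmod 2$, where each bracket equals $1$ or $0$ according to whether the enclosed statement holds. (This correctly handles trails that revisit $v$ several times, closed trails whose start equals their end, and the trivial one-vertex trail.) I then sum $d_T(v)$ over all trails $T$ of the Trail Double Cover \trails{T}. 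Each edge incident to $v$ contributes $1$ to $d_T(v)$ for every trail $T$ using it, and by the second defining property every such edge is used by exactly two trails; hence $\sum_T d_T(v) = 2\,d_{\graph{G}}(v)$, which is even.

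Reducing this summed identity modulo $2$ gives
\begin{equation*}
0 \equiv \sum_T \big( [v \text{ starts } T] + [v \text{ ends } T] \big) \pmod 2 .
\end{equation*}
The first summand counts the trails starting at $v$, which is exactly $1$ by the first defining property, and the second counts the trails ending at $v$. Writing $E(v)$ for the latter quantity, I obtain $0 \equiv 1 + E(v) \pmod 2$, so $E(v)$ is odd and in particular $E(v) \geq 1$. Finally, summing over all vertices, $\sum_{v} E(v)$ equals the total number of trails, namely $n$; since each of the $n$ summands is at least $1$ and they add up to $n$, every $E(v)$ must equal $1$. This is precisely the assertion that each vertex is the final vertex of a unique trail.

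The main obstacle, and the only step that genuinely needs care, is the local parity statement $d_T(v) \equiv [v \text{ starts } T] + [v \text{ ends } T] \pmod 2$: one must verify it for trails that pass through $v$ multiple times and for the degenerate cases (a closed trail with start equal to end, or the trivial trail at an isolated vertex), rather than only for simple paths. Once that fact is pinned down, the global double-count via the double-cover condition and the concluding pigeonhole step are entirely routine.
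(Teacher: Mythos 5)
Your proof is correct and follows essentially the same route as the paper's: both combine the count of $n$ trails starting at the $n$ vertices (so it suffices that every vertex is the end of at least one trail) with the parity of trail--edge incidences at a fixed vertex $v$, which the double-cover condition forces to be even. The only difference is presentational: you prove directly that the number of trails ending at $v$ is odd, whereas the paper assumes $v$ ends no trail and derives the same parity contradiction.
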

\begin{proof}
If the graph has $n$ vertices, then the Trail Double Cover has $n$ trails that start at those
$n$ vertices. So it suffices to show that each vertex is the final vertex of some
trail.  Suppose, for contradiction, that there were a vertex $v$ which was not the final
vertex of any trail.  Then the trail that starts at $v$ contains an odd number
of edges incident to $v$, and any other trail contains an even number of edges
incident to $v$. This implies that in total, an odd number of edges incident to $v$ are in use by some trail, however, this contradicts the property that  
in a Trail Double Cover, each edge incident to $v$ is in use by exactly two
trails.
\end{proof}

We give a characterization of realizability using an auxiliary \emph{digraph} (i.e. directed graph).
Edges in a digraph are called \dword{arcs}; we will refer to the arc that starts at vertex $x$ and goes to vertex $y$ by \arc{x,y}.  In a digraph, we refer to a \dword{directed trail} as a trail that always moves in the direction of the arcs.

From a Trail Double Cover we will create an auxiliary digraph by converting each one of the trails in the Trail Double Cover into a directed trail in a new digraph.
\begin{defn}

Given any Trail Double Cover \trails{T} on \graph{G}, we define its \dword{Edge Digraph} to be the following digraph:

\begin{itemize}

\item Its vertices are the edges of \graph{G}.

\item 
For each trail
\walk{v_1, e_1, v_2, e_2, \ldots, v_k, e_k, v_{k+1}} in \trails{T}, we have the following arcs:
$\arc{e_1, e_2}, \arc{e_2, e_3}, \ldots, \arc{e_{k-1}, e_k}$.

\end{itemize}

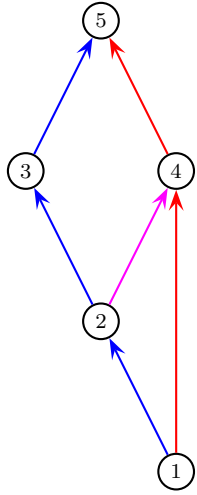
\begin{figure}[htbp]
  \centering
\psset{unit=2,arrowsize=.1}
  \begin{pspicture}(0,-.3)(1,3.2)
    \rput(1,0){\circlenode{1}{\tiny $1$}}
    \rput(.5,1){\circlenode{2}{\tiny $2$}}
    \rput(0,2){\circlenode{3}{\tiny $3$}}
    \rput(1,2){\circlenode{4}{\tiny $4$}}
    \rput(.5,3){\circlenode{5}{\tiny $5$}}
    \ncline[linecolor=blue]{->}{1}{2}
    \ncline[linecolor=blue]{->}{2}{3}
    \ncline[linecolor=blue]{->}{3}{5}
    \ncline[linecolor=red]{->}{1}{4}
    \ncline[linecolor=red]{->}{4}{5}
    \ncline[linecolor=magenta]{->}{2}{4}
  \end{pspicture}
  \caption{The Edge Digraph Of The Trail Double Cover In Figure~\ref{migtGraph}}
  \label{fig:dagofexmpl}
\end{figure}

\end{defn}
For example, Figure~\ref{digraph} is the Edge Digraph of the graph in 
Figure~\ref{trailSpace}, and Figure~\ref{fig:dagofexmpl} is the Edge Digraph
of the Trail Double Cover in Figure~\ref{migtGraph}.  The next theorem gives us a criterion
for determining whether or not a Trail Double Cover is realizable.
Applying the theorem to the graph of Figure~\ref{trailSpace}, we see that 
it is {\bf not} realizable, since its Edge Digraph, drawn in 
Figure~\ref{digraph}, {\bf does} have a directed cycle.

\begin{figure}
\begin{center}
{\psset{unit=1.5,arrowsize=.15}
  \begin{pspicture}(-1.3,-1.3)(1.3,1.3)
    \rput(1.00000000000000, 0.000000000000000){\rnode{d}{\psdot(0,0)}}    
    \uput[0](1.00000000000000,0.000000000000000){$d$}
    \rput(0.500000000000000, 0.866025403784439){\rnode{c}{\psdot(0,0)}}    
    \uput[45](0.500000000000000, 0.866025403784439){$c$}
    \rput(-0.500000000000000, 0.866025403784439){\rnode{b}{\psdot(0,0)}}    
    \uput[135](-0.500000000000000, 0.866025403784439){$b$}
    \rput(-1.00000000000000, 0.000000000000000){\rnode{a}{\psdot(0,0)}}    
    \uput[180](-1.00000000000000, 0.000000000000000){$a$}
    \rput(-0.500000000000000, -0.866025403784439){\rnode{f}{\psdot(0,0)}}    
    \uput[-135](-0.500000000000000, -0.866025403784439){$f$}
    \rput(0.500000000000000, -0.866025403784439){\rnode{e}{\psdot(0,0) }}    
    \uput[-45](0.500000000000000, -0.866025403784439){$e$}
    \ncline[linecolor=blue]{->}{a}{b}
    \ncline[linecolor=blue]{->}{b}{c}
    \ncline[linecolor=green]{->}{c}{d}
    \ncline[linecolor=green]{->}{d}{e}
    \ncline[linecolor=red]{->}{e}{f}
    \ncline[linecolor=red]{->}{f}{a}
  \end{pspicture}}
\end{center}
\caption{The Edge Digraph Of The Trail Double Cover In Figure~\ref{trailSpace}.}
\label{digraph}
\end{figure}
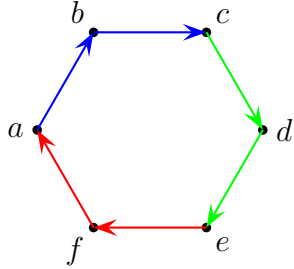

\begin{thm} \label{thm_realize}
A Trail Double Cover is realizable if and only if
its Edge Digraph has {\bf no} directed cycle.

\end{thm}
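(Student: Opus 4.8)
The plan is to prove both implications, with the forward direction being nearly immediate and the reverse direction requiring an inductive construction that mirrors the proof of Lemma~\ref{lemmagreedypartition}.

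First I would dispatch the easy direction: if a Trail Double Cover $\trails{T}$ is realizable via some edge labeling $L$, then every trail of $\trails{T}$ is a \migt, and by definition a \migt\ uses edges of strictly increasing label. Hence whenever $\arc{e,f}$ is an arc of the Edge Digraph --- meaning $e$ and $f$ are consecutive edges of some trail with $e$ used first --- we have $L(e) < L(f)$. A directed cycle $e_1 \to e_2 \to \cdots \to e_k \to e_1$ would then force $L(e_1) < L(e_2) < \cdots < L(e_k) < L(e_1)$, a contradiction, so the Edge Digraph is acyclic.

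For the reverse direction I would induct on the number of edges, the edgeless case being trivial. Assuming the Edge Digraph is acyclic, I note that a nonempty finite acyclic digraph has a vertex of in-degree zero, so I may pick an edge $e^{*} = \{x,y\}$ that is a source of the Edge Digraph. In-degree zero means $e^{*}$ is never immediately preceded by another edge on any trail, so in each of the two trails using it, $e^{*}$ is the first edge; since a unique trail starts at each vertex, these must be the trail starting at $x$, of the form $\walk{x, e^{*}, y, \ldots}$, and the trail starting at $y$, of the form $\walk{y, e^{*}, x, \ldots}$. I would then delete $e^{*}$ to form $\graph{G}^{\rd}$ and form a reduced cover $\trails{T}^{\rd}$ by replacing these two trails with their tails $\walk{y, \ldots}$ (now starting at $y$) and $\walk{x, \ldots}$ (now starting at $x$), leaving every other trail unchanged. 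The key verifications are that $\trails{T}^{\rd}$ is again a Trail Double Cover on $\graph{G}^{\rd}$ --- a unique trail still starts at each vertex, with $x$ and $y$ merely swapping which tail they initiate, and each surviving edge is still used exactly twice --- and that its Edge Digraph is precisely the Edge Digraph of $\trails{T}$ with the source $e^{*}$ deleted, hence still acyclic. By the inductive hypothesis $\trails{T}^{\rd}$ is realizable; I would relabel so that $e^{*}$ receives label $1$ while the edges of $\graph{G}^{\rd}$ keep their (strictly larger) labels. Exactly as in Lemma~\ref{lemmagreedypartition}, this labeling realizes $\trails{T}$: because $e^{*}$ carries the globally smallest label, $\wk_x$ and $\wk_y$ begin by crossing $e^{*}$ and then follow the \migt s of $\graph{G}^{\rd}$ starting at the opposite endpoints, reproducing the two trails through $e^{*}$; and no other \migt\ can ever use $e^{*}$, since reaching $x$ or $y$ first requires traversing an edge of label at least $2$, after which the increasing-label rule forbids the label-$1$ edge.

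The main obstacle is the reverse direction, and specifically the bookkeeping around the source edge: one must argue carefully that a source of the Edge Digraph really is the first edge of both trails that use it and that it joins the start vertices of those two trails, and then confirm that deleting it preserves both Trail-Double-Cover axioms and acyclicity so that the induction applies. Once the source is correctly identified, restoring label $1$ and reconstructing the \migt s is routine and directly parallels the edge-removal step in the proof of Lemma~\ref{lemmagreedypartition}.
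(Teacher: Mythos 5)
Your proof is correct, but it takes a genuinely different route from the paper in the backwards direction (the forward direction is identical: arcs force strictly increasing labels, so a directed cycle is impossible). The paper argues globally and non-recursively: it takes an arbitrary topological sort of the Edge Digraph as the edge labeling, and then verifies directly that every trail of \trails{T} is an \migt\ under that labeling, using the local configuration at each vertex (one trail starting, one ending, the rest passing through consecutive edges $e_i, e_{i+1}$, whose topological order forces $e_1 < e_2 < \cdots < e_d$). Your proof instead inducts on the number of edges: you identify a source $e^*$ of the Edge Digraph, argue it must be the common first edge of the two trails starting at its endpoints, delete it, apply the inductive hypothesis to the reduced cover, and reattach $e^*$ with the smallest label --- essentially running the paper's proof of Lemma~\ref{lemmagreedypartition} in reverse. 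Your argument that the source really joins the start vertices of its two trails, that deletion preserves both Trail-Double-Cover axioms, and that the reduced Edge Digraph is the old one minus a source (hence still acyclic) is all sound; the only loose phrasing is that the edges of $\graph{G}^{\rd}$ cannot literally ``keep'' their labels once $e^*$ takes label $1$ --- they must be shifted up by one, preserving relative order, which is clearly your intent and harmless. As for what each approach buys: the paper's proof implicitly shows that \emph{every} topological sort yields a realization (so realizations correspond to topological sorts, as its example with two sorts illustrates), while your induction avoids the somewhat delicate pointwise verification at each vertex and instead reuses the already-established edge-peeling mechanics of Lemma~\ref{lemmagreedypartition}, at the cost of extra bookkeeping in the inductive step.
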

\begin{proof}

Suppose \graph{G} is the graph and \trails{T} is a Trail Double Cover on it.
Let \graph{D} be the Edge Digraph of \trails{T}.

{\bf Forward Direction}: Consider an edge labeling that yields \trails{T}.
Any arc of \graph{D} goes from $e_1$ to $e_2$, where $e_1$ and $e_2$ are edges
of \graph{G} such that the label of $e_1$ is less than the label of $e_2$.
So \graph{D} cannot have a directed cycle.

{\bf Backwards Direction}:  Supposing \graph{D} has no directed cycle, take
 any topological sort of \graph{D} to arrive at an edge labeling of \graph{G}.  
For example, the graph of Figure~\ref{migtGraph} has the edge digraph in Figure~\ref{fig:dagofexmpl}, so one topological sort is 1,2,3,4,5, which corresponds to the original edge labeling of the graph in Figure~\ref{migtGraph}, while the other topological sort is 1,2,4,3,5, giving a different edge labeling of the graph in Figure~\ref{migtGraph}.
Let $\graph{G}^*$ be \graph{G} with its edges labeled according to the topological sort.   
We show that the set of  \migt{s} of $\graph{G}^*$ is exactly \trails{T}. 
For any Trail Double Cover, at each vertex $x$ we get the situation shown in 
Figure~\ref{figActionAtVertex} (left side): One trail, say $\wk_1$, starts at $x$ by using
edge $e_1$, one trail, say $\wk_d$, ends at $x$ by using edge $e_d$,
and the rest of the trails enter $x$ by using one edge and leave by another. We 
let $\wk_{(i, i+1)}$ refer to the trail that enters along $e_i$ and leaves along
$e_{i+1}$, for $i = 1, 2, \ldots, d-1$.  It is possible that some of the $d+1$ trails
$\wk_1, \wk_d, \wk_{(1,2)}, \wk_{(2,3)}, \ldots, \wk_{(d-1, d)}$ are not distinct,
and any $e_i$ and $e_j$ might be multi-edges attaching the same two vertices.
The related part of \graph{D} is pictured 
in Figure~\ref{figActionAtVertex} (right side).  In our topological sort we 
must have $e_1 < e_2 < \cdots < e_d$.  Now, consider any trail 
$\wk = \seq{v_1, w_1, v_2, \ldots, v_{k-1}, w_{k-1}, v_k}$ in \trails{T};
we show that in the topological ordering of the edges, \wk \ is an \migt \ in $\graph{G}^*$.
Referring to Figure~\ref{figActionAtVertex} (left side), we can see that edge $w_1$
of \wk, being its first edge corresponds to an edge like $e_1$ of  
Figure~\ref{figActionAtVertex} (right side)
and so it is the smallest edge incident to $v_1$, as required.  Similarly, edge $w_{k-1}$ of  \wk \ corresponds to edge like
$e_d$, and so it is the largest labeled edge at $v_k$, as required.
Consider any intermediate edges $w_i$ and $w_{i+1}$ of \wk,
both incident to vertex $v_{i+1}$; these edges 
correspond to some
$e_j$ and $e_{j+1}$ in Figure~\ref{figActionAtVertex}.  We suppose the trail has been \migt \ up to and including $w_i$,
and show that it still is for $w_{i+1}$.  The topological ordering of the edges
pictured in Figure~\ref{figActionAtVertex} (right side) indicates that 
$w_{i+1}$ is the next largest labeled edge, so this matches the \migt.
Thus we have shown that every trail of \trails{T} is an \migt \ in $\graph{G}^*$.
Also note that every \migt \ of $\graph{G}^*$ is in \trails{T} since being a
Trail Double Cover, \trails{T} used every edge twice, so there can be no
more \migt{s}.  Thus \trails{T} is exactly the set of \migt{s} on $\graph{G}^*$,
so we have realized \trails{T}.  
\end{proof}

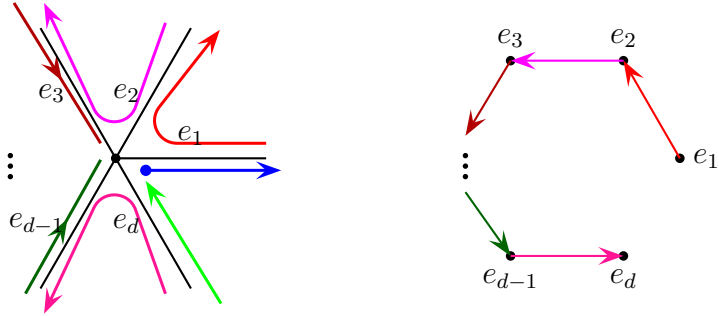
\begin{figure}
\begin{center}
{\psset{unit=2,arrowsize=.12}
\begin{pspicture}(-1.5,-1.2)(7,1.2)
\rput(-.7,0){\huge \vdots}
\psdot(0,0)
\psline[linewidth=.02,linecolor=red,linearc=.15]{->}(1.,.1)(.1,.1)(0.692252430155,0.854860557603)
\psline[linewidth=.02,linecolor=magenta,linearc=.15]{->}(0.330934079986,0.901067248718)(.01,0)(-0.476222157659,1.03366188286)
\psline[linewidth=.02,linecolor=deeppink,linearc=.15]{->}(0.330934079986,-0.901067248718)(.01,0)(-0.476222157659,-1.03366188286)
\psline[linewidth=.02,ArrowInside=->,linecolor=lightred](-0.676222157659,1.03366188286)(-.1,.1)
\psline[linewidth=.02,ArrowInside=->,linecolor=darkgreen](-0.6,-.9)(-.1,-.01)
\psline[linewidth=.02,linecolor=blue]{*->}(0.2,-.08)(1.1,-0.08)
\psline[linewidth=.02,linecolor=green]{->}(0.700000000000000, -0.966025403784439)(.2,-.15)
\psline(0,0)(1.00000000000000, 0.000000000000000)	   
\psline(0,0)(0.500000000000000, 0.866025403784439)   
\psline(0,0)(-0.500000000000000, 0.866025403784439)  
\psline(0,0)(-0.500000000000000, -0.866025403784439) 
\psline(0,0)(0.500000000000000, -0.866025403784439)
\uput[90](0.500000000000000, 0.000000000000000){$e_1$}
\uput[180](0.250000000000000, 0.433012701892219){$e_2$}
\uput[180](-0.250000000000000, 0.433012701892219){$e_3$}
\uput[180](-0.250000000000000, -0.433012701892219){$e_{d-1}$}
\uput[180](0.250000000000000, -0.433012701892219){$e_d$}
\rput(3,0){
\psset{unit=1.5}
\rput(0.500000000000000, 0.000000000000000){\rnode{1}{\psdot(0,0)}}    
\uput[0](0.500000000000000, 0.000000000000000){$e_1$}
\rput(0.250000000000000, 0.433012701892219){\rnode{2}{\psdot(0,0)}} 
\uput[90](0.250000000000000, 0.433012701892219){$e_2$}
\rput(-0.250000000000000, 0.433012701892219){\rnode{3}{\psdot(0,0)}}    
\uput[90](-0.250000000000000, 0.433012701892219){$e_3$}
\rput(-0.250000000000000, -0.433012701892219){\rnode{5}{\psdot(0,0)}}
\uput[-90](-0.250000000000000, -0.433012701892219){$e_{d-1}$}
\rput(0.250000000000000, -0.433012701892219){\rnode{6}{\psdot(0,0)}}    
\uput[-90](0.250000000000000, -0.433012701892219){$e_d$}
\ncline[linecolor=red]{->}{1}{2}
\ncline[linecolor=magenta]{->}{2}{3}
\ncline[linecolor=deeppink]{->}{5}{6}
\rput(-.45,0){\huge \vdots}
\psline[linecolor=lightred]{->}(-0.250000000000000, 0.433012701892219)(-.45,.1)
\psline[linecolor=darkgreen]{->}(-.45,-.15)(-0.250000000000000, -0.433012701892219)
}
\end{pspicture}}
\end{center}
\caption{The Configuration Of Trails In The Neighborhood Of A Vertex.}
\label{figActionAtVertex}
\end{figure}

\section{Dual Graph}
\label{sec:greedy}

In this section we define the notion of the dual of a transposition sequence via the graph interpretation.
This notion of dual will turn out to be equivalent to the Mind-Body Dual.

\begin{defn}
Given a labeled graph \graph{G}, by $\graph{G'}$, the \dword{Trail Dual}, we mean the following labeled graph:
\begin{itemize}

\item
The vertices of $\graph{G}'$ are the same as those of \graph{G}.

\item
The edges of $\graph{G}'$ are determined as follows: 
For any vertices $x$ and $y$, if the edge labeled $k$ is used by
both trail $\wk_x$ and trail $\wk_y$, then make an edge labeled $k$ between $x$ and $y$.
\end{itemize}

\end{defn}
For example, consider the graph of Figure~\ref{migtGraph}, with its trails displayed.  Since $\wk_1$ and $\wk_3$ both use edge $4$, the Trail Dual will
have an edge with label $4$ between vertices 1 and 3; the full Trail Dual is shown in Figure~\ref{dualGraph}.
Notice that the Mind-Body Dual from Example~\ref{ex_dual}
viewed as a graph is exactly the graph in Figure~\ref{dualGraph}. 
In the next theorem we point out that this is always true.

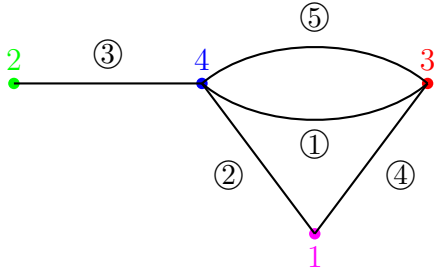
\begin{figure}
\begin{center}
\begin{pspicture}(-.3,1.5)(6.5,5.2)
  \rput(0,4){\rnode{n2}{\psdot[linecolor=green,dotscale=1.2](0,0)}}  
  \uput[90](0,4){\green $2$}
  \rput(2.5,4){\rnode{n4}{\psdot[linecolor=blue,dotscale=1.2](0,0)}}  
  \uput[90](2.5,4){\blue $4$}
  \rput(5.5,4){\rnode{n3}{\psdot[linecolor=red,dotscale=1.2](0,0)}}  
  \uput[90](5.5,4){\red $3$}
  \rput(4,2){\rnode{n1}{\psdot[linecolor=magenta,dotscale=1.2](0,0)}}  
  \uput[-90](4,2){\magenta $1$}
  \ncline{n2}{n4}
  \naput{\raisebox{.5pt}{\textcircled{\raisebox{-.9pt} {\small $3$}}}}
  \ncline{n1}{n4}
  \naput{\raisebox{.5pt}{\textcircled{\raisebox{-.9pt} {\small $2$}}}}
  \ncline{n1}{n3}
  \nbput{\raisebox{.5pt}{\textcircled{\raisebox{-.9pt} {\small $4$}}}}
  \ncarc[arcangle=40]{n3}{n4}
  \naput{\raisebox{.5pt}{\textcircled{\raisebox{-.9pt} {\small $1$}}}}
  \ncarc[arcangle=40]{n4}{n3}
  \naput{\raisebox{.5pt}{\textcircled{\raisebox{-.9pt} {\small $5$}}}}
\end{pspicture}
\end{center}
\caption{Trail Dual Of The Graph From Figure~\ref{migtGraph}.}
\label{dualGraph}
\end{figure}

\begin{thm} \label{thmTrailDualChar}
For any transposition sequence, its Mind-Body Dual is the same as its Trail Dual.
\end{thm}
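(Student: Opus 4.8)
The plan is to compare the two duals one edge-label at a time. Fix a transposition sequence $\ps{s} = \seq{\per{s_1}, \ldots, \per{s_m}}$, let $\graph{G}$ be its labeled graph, and let $\seq{A_0, \ldots, A_m}$ be its Corresponding Mind-Body Sequence. Write $\per{s_k} = (a,b)$ for the transposition corresponding to the edge labeled $k$. On the Mind-Body side, by definition $\per{s'_k} = \mb_{A_{k-1}}(\per{s_k}) = (x', y')$, where $x'$ is the mind above body $a$ and $y'$ is the mind above body $b$ in $A_{k-1}$; viewed as a graph, the Mind-Body Dual therefore has an edge labeled $k$ joining vertices $x'$ and $y'$. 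On the Trail side, the edge labeled $k$ of $\graph{G}'$ joins the start vertices of the two \migt{s} that traverse the edge labeled $k$. It thus suffices to show, for each $k$, that these two trails start exactly at $x'$ and $y'$.

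The heart of the argument is the claim that \emph{the \migt \ $\wk_z$ traverses the edge labeled $k$ if and only if mind $z$ lies above body $a$ or body $b$ in $A_{k-1}$}. To see this I would use the body-trajectory reading of the MIGT: by Lemma~\ref{lem_trail_trajectory} together with the lemma identifying the trajectory of $z$ with the contracted sequence of bodies $\seq{b_0, b_1, \ldots, b_m}$ occupied by mind $z$, the vertices of $\wk_z$ are exactly the bodies through which mind $z$ passes, and the edge labels increase along $\wk_z$ in step with the times at which mind $z$ actually moves. Mind $z$ moves precisely at those steps $k$ at which it is above one of the two bodies of $\per{s_k}$, and at such a step it crosses the edge created by $\per{s_k}$, namely the edge labeled $k$. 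Hence $\wk_z$ uses the edge labeled $k$ exactly when mind $z$ is above body $a$ or body $b$ in $A_{k-1}$, which proves the claim.

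Granting the claim, the two minds above bodies $a$ and $b$ in $A_{k-1}$ are $x'$ and $y'$ (distinct, since $a \neq b$), so the two trails traversing the edge labeled $k$ are $\wk_{x'}$ and $\wk_{y'}$; by Lemma~\ref{lemmagreedypartition} these are the only two. Because $A_0 = \id$ places mind $z$ in body $z$, the \migt \ $\wk_z$ starts at vertex $z$, so the Trail Dual joins vertices $x'$ and $y'$ by its edge labeled $k$, matching the Mind-Body Dual edge for every $k$; the two duals therefore coincide. The main obstacle I anticipate is making the bookkeeping in the claim fully rigorous: one must align the increasing edge-labels of the MIGT with the time-steps of the mind-body process and argue, in the presence of multi-edges, that the transition of mind $z$ at step $k$ uses precisely the edge labeled $k$ rather than some parallel edge. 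This is exactly where the increasing structure of the \migt \ and the label-equals-step-index convention do the work.
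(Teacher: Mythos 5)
Your proposal is correct and follows essentially the same route as the paper's own proof: both arguments reduce, for each label $k$, to showing that the two \migt{s} crossing the edge labeled $k$ are exactly the trails starting at the minds $x'$ and $y'$ sitting above the two swapped bodies in $A_{k-1}$, via the trail--trajectory correspondence. The only organizational difference is that the paper truncates to the prefix graph $\seq{\per{s_1},\ldots,\per{s_{k-1}}}$ so it can invoke Lemma~\ref{lem_trail_trajectory} in its endpoint form and then extend both trails along the newly added largest edge, whereas you work directly in the full graph using the (equivalent, slightly sharper) claim that the edges of $\wk_z$ are labeled exactly by the time steps at which mind $z$ moves --- precisely the bookkeeping you flag, which the paper's truncation sidesteps.
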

\begin{proof}

Suppose the transposition sequence is \seq{\per{s_1}, \ldots, \per{s_m}}, with Mind-Body Dual
\seq{\per{s_1'}, \ldots, \per{s_m'}}, and Trail Dual
\seq{\per{t_1}, \ldots, \per{t_m}}.
We show that for any $k$, $\per{t_k} = \per{s_k'}$.
Suppose that $A$ is the Mind-Body Assignment 
$\id \bact \seq{\per{s_1}, \ldots, \per{s_{k-1}}}$ and $\per{s_k} = (x,y)$.  Thus
 $\per{s_k'} = \mb_{A}(\per{s_k}) = (x',y')$, i.e. in $A$, $x'$ is above $x$, and $y'$ is above $y$.  Note that 
$\id \bact \seq{\per{s_1}, \ldots, \per{s_{k-1}}} = \per{s_1} \cdots \per{s_{k-1}}$ (by Lemma~\ref{lem_acts_are_products}).  
Consider the \migt{s} of the labeled graph \seq{\per{s_1}, \ldots, \per{s_{k-1}}}; by Lemma~\ref{lem_trail_trajectory}, $\wk_{x'}$ starts at $x'$ and ends at $x$, and  $\wk_{y'}$ starts at $y'$ and ends at $y$.  Thus when edge \per{s_k} is added to the graph, $\wk_{x'}$ is extended by moving along the edge labeled $k$ to now end at $y$, while $\wk_{y'}$ is extended by moving along the edge labeled $k$ to now end at $x$.  So $\per{t_k} = (x', y') = \per{s'_k}$, and we are done.
\end{proof}

Now that we know that the Mind-Body Dual and the Trail Dual are the same, we can make a further
connection between the graph interpretation and the mind-body interpretation; also we may use the term \dword{dual} to refer to either of the equivalent notions.
  Recall Lemma~\ref{lem_actions_and_dual} and the discussion that
follows it.  From that discussion we can conclude that when a transposition sequence is displayed as a labeled graph, as in 
Figure~\ref{graphOfTranSeq}, this shows the sequence of bodies that will step into
The Machine; in this case, first $3$ and $4$, followed by $1$ and $3$, and so on.
Now we know that the Trail Dual, as in Figure~\ref{dualGraph}, shows the corresponding sequence of minds that step into the machine;
in this case, first $3$ and $4$, followed by $1$ and $4$, and so on.
Furthermore, two trails $\wk_x$ and $\wk_y$ of the original graph intersecting at the edge labeled $k$ means that on the
$k^{th}$ swap, minds $x$ and $y$ step into The Machine;
i.e. to see which non-visible minds go into The Machine, just look at where 
the trails cross.

\section{Algebraic Characterization of The Dual}
\label{sec_alg}

This section provides an algebraic characterization of the dual, which leads to a 
a simple graph algorithm for computing the dual.

\begin{defn} Suppose \per{p} and \per{t} are permutations, and
\seq{\per{s_1}, \ldots, \per{s_k}} is a transposition sequence.

\begin{itemize}

\item
Let $\per{p}^{\per{t}}$ be
the conjugate $\per{t}^{-1} \per{p} \per{t}$.  

\item
Let $\seq{\per{s_1}, \ldots, \per{s_k}}^{\per{t}}$ be
$\seq{\per{s_1}^{\per{t}}, \ldots, \per{s_k}^{\per{t}}}$.

\end{itemize}

\end{defn}

\begin{lem} \label{lem_permexp}
For any permutations \per{p}, \per{a}, and \per{b} we have
$(\per{p}^{\per{a}})^{\per{b}} = \per{p}^{({\per{a}} \cdot {\per{b}})}$.
\end{lem}

A simple but useful observation is to note that for transpositions
\per{s} and $\per{t} = (x, y)$, we have that $\per{s}^{\per{t}}$ is just
\per{s} with $x$ replaced by $y$ and $y$ replaced by $x$.

\begin{exm}
$$\seq{(3,4), (1,3), (1,2), (3,4), (2,3)}^{(3,4)} = 
\seq{(3,4), (1,4), (1,2), (3,4), (2,4)}$$

\end{exm}

We now state and prove the key technical lemma for this section, before proving Theorem~\ref{thm_algchar}, which characterizes the dual algebraically.
By writing  
$\seq{\per{t}} \seq{\per{s_1}, \ldots, \per{s_m}}$ we mean
\seq{\per{t}, \per{s_1}, \ldots, \per{s_m}}.

\begin{lem} \label{lemDualIncrement}
For transpositions $\per{t}, \per{s_1}, \ldots, \per{s_m}$, we have:
$$\seq{\per{t}, \per{s_1}, \ldots, \per{s_m}}' = 
(\seq{\per{t}}  \seq{\per{s_1}, \ldots, \per{s_m}}')^{\per{t}}.$$

\end{lem}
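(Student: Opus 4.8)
The plan is to establish the identity entry by entry, since both sides are transposition sequences of the same length $m+1$. The key tool is an algebraic description of the Mind-Body Dual of a single transposition: for any Mind-Body Assignment $A$ (viewed, as usual, as the permutation sending each mind to the body below it) and any transposition $\per{s}$, I claim that $\mb_A(\per{s}) = \per{s}^{A^{-1}}$. Indeed, if $\per{s} = (x,y)$, then the mind above body $x$ in $A$ is the image of $x$ under $A^{-1}$, and likewise for $y$, so $\mb_A(\per{s})$ is obtained from $\per{s}$ by relabeling its two letters through $A^{-1}$; this is exactly the conjugate $\per{s}^{A^{-1}}$, since conjugating a transposition relabels its letters through the conjugating permutation.

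First I would dispose of the leading entry. The dual of $\per{t}$ (the first transposition of the longer sequence) is computed at the assignment $\id$, and $\mb_{\id}(\per{t}) = \per{t}^{\id} = \per{t}$. On the right-hand side, after prepending $\per{t}$ and conjugating everything by $\per{t}$, the leading entry becomes $\per{t}^{\per{t}} = \per{t}$, so the two sides agree in the first slot.

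Next I would compare the entries coming from $\per{s_k}$ for $k \ge 1$. Let $\seq{A_0, \ldots, A_m}$ be the Corresponding Mind-Body Sequence of $\seq{\per{s_1}, \ldots, \per{s_m}}$, so that $\per{s'_k} = \mb_{A_{k-1}}(\per{s_k})$, and let $\seq{B_0, B_1, \ldots}$ be the Corresponding Mind-Body Sequence of the longer sequence. Rewriting each assignment as a product of its transpositions via Lemma~\ref{lem_acts_are_products} gives $A_{k-1} = \per{s_1}\cdots\per{s_{k-1}}$ and $B_k = \per{t}\cdot\per{s_1}\cdots\per{s_{k-1}} = \per{t}\cdot A_{k-1}$. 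Hence the dual of $\per{s_k}$ inside the longer sequence is
\[
\mb_{B_k}(\per{s_k}) = \per{s_k}^{B_k^{-1}} = \per{s_k}^{A_{k-1}^{-1}\per{t}^{-1}} = \left(\per{s_k}^{A_{k-1}^{-1}}\right)^{\per{t}^{-1}} = (\per{s'_k})^{\per{t}},
\]
where the second equality uses $B_k^{-1} = A_{k-1}^{-1}\per{t}^{-1}$, the third is Lemma~\ref{lem_permexp}, and the last combines $\per{s_k}^{A_{k-1}^{-1}} = \mb_{A_{k-1}}(\per{s_k}) = \per{s'_k}$ with the fact that a transposition equals its own inverse, $\per{t}^{-1} = \per{t}$. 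Since the corresponding non-leading entry of the right-hand side is precisely $(\per{s'_k})^{\per{t}}$, the two sequences agree in every slot.

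The one step needing genuine care — and the likeliest place to slip — is the conjugation bookkeeping: inverting $B_k = \per{t}\cdot A_{k-1}$ produces a conjugation by $\per{t}^{-1}$ rather than by $\per{t}$, and it is only the elementary identity $\per{t}^{-1} = \per{t}$ for transpositions that reconciles this with the statement. Everything else is a routine substitution once the formula $\mb_A(\per{s}) = \per{s}^{A^{-1}}$ is in hand.
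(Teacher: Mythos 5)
Your proof is correct, but it takes a more algebraic route than the paper's. The paper argues directly on the Mind-Body Assignments: writing $\per{t}=(x,y)$, it observes that the Corresponding Mind-Body Sequence of \seq{\per{t},\per{s_1},\ldots,\per{s_m}} is \seq{\id, A_0^*,\ldots,A_m^*}, where $A_i^*$ is $A_i$ with \emph{minds} $x$ and $y$ swapped (prepending a body swap has the same effect as swapping those two minds in every subsequent assignment), and then reads off that each dual entry is the old one with $x$ and $y$ interchanged, i.e.\ conjugated by \per{t}. Your argument packages the same underlying fact --- your $B_k=\per{t}\cdot A_{k-1}$ is exactly the paper's ``$A_{k-1}$ with minds $x,y$ swapped'' --- into the closed formula $\mb_A(\per{s})=\per{s}^{A^{-1}}$, after which the proof is pure conjugation bookkeeping via Lemma~\ref{lem_acts_are_products} and Lemma~\ref{lem_permexp}, including the small but genuinely necessary step $\per{t}^{-1}=\per{t}$ that you flag. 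What your route buys: the formula $\mb_A(\per{s})=\per{s}^{A^{-1}}$, which the paper never states explicitly, is strictly stronger than the lemma itself; combined with $A_{k-1}=\per{s_1}\cdots\per{s_{k-1}}$ (Lemma~\ref{lem_acts_are_products}) it gives $\per{s'_k}=\per{s_k}^{\per{s_{k-1}}\cdots\per{s_1}}$, i.e.\ Theorem~\ref{thm_algchar}, in one line, with no induction and no need for this incremental lemma at all. What the paper's route buys: it stays entirely inside the mind-body interpretation, which is the expository thread of that section, and avoids having to verify that $\mb$ is conjugation by an inverse (a spot where the left-to-right multiplication convention could easily trip one up --- your bookkeeping there is right).
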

\begin{proof}

Suppose \seq{A_0, A_1, \ldots, A_m} is the Mind-Body Sequence corresponding to
\seq{\per{s_1}, \ldots, \per{s_m}}.
Now we describe the Mind-Body Sequence corresponding to 
\seq{\per{t}, \per{s_1}, \ldots, \per{s_m}}.
Suppose $\per{t} = (x,y)$, where $x < y$.  Our Mind-Body Sequence starts with \id, and then $\id \bact \per{t}$ yields \id, except that bodies $x$ and $y$ have been swapped; call the resulting Mind-Body Assignment $A_0^*$.  We can write $A_0^*$
as follows (\emph{instead of swapping the bodies, we equivalently swap the minds $x$ and $y$, and leave the bodies in order}): 
$$ \mindbod{1, 2,  \ldots, y, \ldots, x, \ldots n}{1, 2, \ldots, x, \ldots, y, \ldots, n}$$  

\noindent
Thus we can make the following \emph{observation}:  
\begin{quote}
The Mind-Body Sequence corresponding to 
\seq{\per{t}, \per{s_1}, \ldots, \per{s_m}} is
\seq{\id, A_0^*, A_1^*, \ldots, A_m^*}, where $A_i^*$ is just $A_i$ with
minds $x$ and $y$ swapped.  
\end{quote}
Now we can compare the duals
$\seq{\per{t}, \per{s_1}, \ldots, \per{s_m}}' = 
\seq{\per{t}, \per{s^*_1}, \ldots, \per{s^*_m}}$ and
$\seq{\per{s_1}, \ldots, \per{s_m}}' = \seq{\per{s'_1}, \ldots, \per{s'_m}}$.
Consider some $\per{s_k} = (a,b)$.
From the definition of Mind-Body Dual, to determine $\per{s_k}'$ we look at what
is above $a$ and $b$ in $A_{k-1}$, while for \per{s^*_k} we look in
$A_{k-1}^*$, and so by the above observation \per{s^*_k} is just \per{s'_k}
with $x$ and $y$ swapped, i.e. $\per{s_k^*} = (\per{s'_k})^{\per{t}}$.  So the lemma
follows.
\end{proof}

\begin{thm} \label{thm_algchar}
For any transposition sequence \seq{\per{s_1}, \ldots, \per{s_m}}, its dual is

$$\seq{ \per{s_1}, \per{s_2}^{\per{s_1}}, \per{s_3}^{\per{s_2} \per{s_1}}, \ldots, \per{s_m}^{\per{s_{m-1}} \cdots \per{s_1}  } }$$

\end{thm}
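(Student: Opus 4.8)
The plan is to prove the formula by induction on the length $m$ of the transposition sequence, using Lemma~\ref{lemDualIncrement} as the engine that peels off the first transposition. The base case $m=1$ is immediate: the dual of a single transposition $\seq{\per{s_1}}$ is $\seq{\per{s_1}}$ itself (the Corresponding Mind-Body Sequence starts at $\id$, so what is above $x$ and $y$ in $A_0 = \id$ is just $x$ and $y$), and the claimed formula gives $\seq{\per{s_1}}$, matching.

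For the inductive step, I would write the sequence as $\seq{\per{s_1}} \seq{\per{s_2}, \ldots, \per{s_m}}$, treating $\per{s_1}$ as the ``$\per{t}$'' of Lemma~\ref{lemDualIncrement}. Applying that lemma gives
$$\seq{\per{s_1}, \per{s_2}, \ldots, \per{s_m}}' = \bigl(\seq{\per{s_1}} \seq{\per{s_2}, \ldots, \per{s_m}}'\bigr)^{\per{s_1}}.$$
By the inductive hypothesis applied to $\seq{\per{s_2}, \ldots, \per{s_m}}$ (a sequence of length $m-1$), its dual is
$$\seq{\per{s_2}, \per{s_3}^{\per{s_2}}, \per{s_4}^{\per{s_3}\per{s_2}}, \ldots, \per{s_m}^{\per{s_{m-1}}\cdots \per{s_2}}}.$$
Prepending $\per{s_1}$ and then conjugating the whole sequence by $\per{s_1}$, I would use the fact that conjugation distributes entrywise over a transposition sequence (from the definition of $\seq{\ldots}^{\per{t}}$) together with Lemma~\ref{lem_permexp}, which lets me compose the exponents: the $k$-th entry $\per{s_k}^{\per{s_{k-1}}\cdots \per{s_2}}$ becomes $(\per{s_k}^{\per{s_{k-1}}\cdots \per{s_2}})^{\per{s_1}} = \per{s_k}^{\per{s_{k-1}}\cdots \per{s_2}\per{s_1}}$, and the prepended $\per{s_1}$ becomes $\per{s_1}^{\per{s_1}} = \per{s_1}$. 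This reproduces exactly the claimed formula for the length-$m$ sequence.

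The step I expect to require the most care is the bookkeeping in the conjugation: one must verify that conjugating the entire prepended sequence by $\per{s_1}$ acts entry-by-entry and that the accumulated exponents telescope correctly via Lemma~\ref{lem_permexp}, so that each original exponent $\per{s_{k-1}}\cdots\per{s_2}$ is right-multiplied by $\per{s_1}$ to yield $\per{s_{k-1}}\cdots\per{s_2}\per{s_1}$, matching the pattern $\per{s_k}^{\per{s_{k-1}}\cdots\per{s_1}}$. This is routine once the indices are aligned, so the only genuine subtlety is confirming that the first entry $\per{s_1}$ (with empty exponent) behaves consistently, which it does since $\per{s_1}^{\per{s_1}} = \per{s_1}$. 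No genuine obstacle arises; the whole argument is a clean induction resting on Lemmas~\ref{lem_permexp} and~\ref{lemDualIncrement}.
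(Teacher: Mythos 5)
Your proof is correct and follows essentially the same route as the paper's own argument: induction on the length of the sequence, peeling off $\per{s_1}$ via Lemma~\ref{lemDualIncrement}, applying the inductive hypothesis to the tail, and composing the exponents entrywise with Lemma~\ref{lem_permexp} (including the observation that $\per{s_1}^{\per{s_1}} = \per{s_1}$). The only difference is that you spell out the base case and the entrywise-conjugation bookkeeping, which the paper leaves implicit.
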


\begin{proof}

We proceed by induction on the length of the transposition sequence, showing the inductive step.

\begin{alignat*}{2}
\seq{\per{s_1}, \ldots, \per{s_m}}' 
 &  = (\seq{\per{s_1}}  \seq{\per{s_2}, \ldots, \per{s_m}}')^{\per{s_1}} && \text{, by lemma~\ref{lemDualIncrement}} \\ 
 & = (\seq{\per{s_1}}  \seq{\per{s_2}, \per{s_3}^{\per{s_2}},  \ldots, 
\per{s_m}^{\per{s_{m-1}} \cdots \per{s_2}} })^{\per{s_1}} && \text{, by inductive hypothesis} \\ 
 & = (\seq{\per{s_1}, \per{s_2}, \per{s_3}^{\per{s_2}},  \ldots, 
\per{s_m}^{\per{s_{m-1}} \cdots \per{s_2}} })^{\per{s_1}} && \text{} \\ 
 & = \seq{ \per{s_1}, \per{s_2}^{\per{s_1}}, \per{s_3}^{\per{s_2} \per{s_1}}, \ldots, \per{s_m}^{\per{s_{m-1}} \cdots \per{s_1}  } } && 
\text{, by Lemma~\ref{lem_permexp}}    
\end{alignat*}
\end{proof}

\begin{exm}
Recall that in Example~\ref{ex_trans_seq} we considered the transposition sequence 
\seq{(3,4), (1,3), (1,2), (3,4), (2,3)}.  If its dual is \seq{\per{s'_1}, \per{s'_2}, \per{s'_3}, \per{s'_4},\per{s'_5}}, we can, for example, calculate \per{s'_3}:
$$
\per{s'_3} = \per{s_3}^{s_2 s_1} = (1,2)^{(1,3) (3,4)} = (3,2)^{(3,4)} = (4,2)
$$

\end{exm}

So we can see that the above algebraic characterization of the dual provides a way to think of calculating the dual one edge at a time.
We can also interpret the algebraic characterization as a graph algorithm.
For input, the algorithm takes a 
labeled graph \graph{G} (with $m$ edges) and outputs another labeled graph $\graph{G}^*$
(which will in fact be the dual of \graph{G}).  The algorithm is as follows.

\begin{enumerate}

\item Initialize $\graph{G}^*$ to be the graph with \emph{no edges} and the \emph{same} vertex set as \graph{G}.

\item 
In \graph{G}, proceed from the edge labeled $m$ in order down to the edge labeled $1$; 
for edge $\{a, b\}$, labeled $k$ do the following:

\begin{itemize}

\item
Add an edge labeled $k$ to $\graph{G}^*$ between vertices $a$ and $b$.

\item
Then swap the labels $a$ and $b$ in $\graph{G}^*$.

\end{itemize}

\end{enumerate}
As an example of the graph algorithm, Figure~\ref{graphAlg} shows the algorithm applied to the graph of Figure~\ref{graphOfTranSeq} to obtain the dual of Figure~\ref{dualGraph}.
Now we give the promised alternate algebraic proof of Lemma~\ref{lemDualTwice},
which states that for any transposition sequence \per{s} we have $\per{s''} = \per{s}$.

\begin{proof}

Suppose the $\ps{s} = \seq{\per{s_1}, \ldots, \per{s_m}}$, its dual
$\ps{s'} = \seq{\per{s_1'}, \ldots, \per{s_m'}}$ and the dual of \ps{s'} is
$\ps{s''} = \seq{\per{s_1''}, \ldots, \per{s_m''}}$.
We show that $\per{s_k''} = \per{s_k}$ for any $k = 1, \ldots, m$.
Note that $\per{s''_1} = \per{s'_k} = \per{s_1}$. For $k \ge 2$, we have the following.

\begin{alignat*}{2}
\per{s_k''} 
 & = (\per{s_k'})^{\per{s_{k-1}'} \cdots \per{s_1'}} && \text{, by Theorem~\ref{thm_algchar}} \\ 
 & = (\per{s_k}^{\per{s_{k-1}} \cdots \per{s_1}})^{\per{s_{k-1}'} \cdots \per{s_1'}} && \text{, by Theorem~\ref{thm_algchar} } \\ 
 & = \per{s_k}^{\per{s_{k-1}} \cdots \per{s_1} \cdot \per{s_{k-1}'} \cdots \per{s_1'}} && \text{, by Lemma~\ref{lem_permexp}} 
\end{alignat*}

\noindent
Thus, it suffices to show that 
$\per{s_{k-1}} \cdots \per{s_1} \cdot \per{s_{k-1}'} \cdots \per{s_1'} = \id$, which we prove by induction on $k$.
The base case is true since $\per{s_1'} = \per{s_1}$.  Now we show the inductive step.

\begin{alignat*}{2}
\per{s_k} \cdots \per{s_1} \cdot \per{s_k'} \cdots \per{s_1'} 
 & = \per{s_k} \cdots \per{s_1}
(\per{s_1} \cdots \per{s_{k-1}} \per{s_k} \per{s_{k-1}} \cdots \per{s_1})
\per{s_{k-1}'} \cdots \per{s_1'} && \text{, by Theorem~\ref{thm_algchar} } \\ 
 & = \per{s_{k-1}} \cdots \per{s_1} \cdot \per{s_{k-1}'} \cdots \per{s_1'} && \text{} \\ 
 & = \id && \text{, by inductive hypothesis}  
\end{alignat*}
\end{proof}

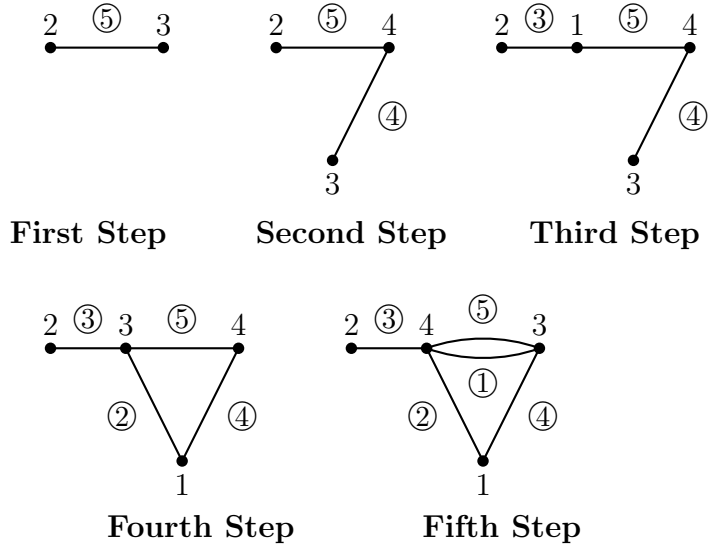
\begin{figure}
\begin{center}
\begin{pspicture}(-1,1.9)(10,9)
  \rput(0,8.5){
  \rput(0,0){\rnode{n2}{\psdot[dotscale=1.2](0,0)}}  
  \uput[90](0,0){$2$}
  \rput(1.5,0){\rnode{n3}{\psdot[dotscale=1.2](0,0)}}  
  \uput[90](1.5,0){$3$}
  \ncline{n2}{n3}
  \naput{\raisebox{.5pt}{\textcircled{\raisebox{-.9pt} {\small $5$}}}}
}
\rput(.5,6){\textbf{First Step}}
  \rput(3,8.5){
  \rput(0,0){\rnode{n2}{\psdot[dotscale=1.2](0,0)}}  
  \uput[90](0,0){$2$}
  \rput(1.5,0){\rnode{n3}{\psdot[dotscale=1.2](0,0)}}  
  \uput[90](1.5,0){$4$}

  \rput(.75,-1.5){\rnode{n4}{\psdot[dotscale=1.2](0,0)}}  
  \uput[-90](.75,-1.5){$3$}

  \ncline{n2}{n3}
  \naput{\raisebox{.5pt}{\textcircled{\raisebox{-.9pt} {\small $5$}}}}
  \ncline{n3}{n4}
  \naput{\raisebox{.5pt}{\textcircled{\raisebox{-.9pt} {\small $4$}}}}
}
\rput(4,6){\textbf{Second Step}}
  \rput(7,8.5){
    \rput(-1,0){\rnode{n1}{\psdot[dotscale=1.2](0,0)}}  
    \uput[90](-1,0){$2$}
  \rput(0,0){\rnode{n2}{\psdot[dotscale=1.2](0,0)}}  
  \uput[90](0,0){$1$}
  \rput(1.5,0){\rnode{n3}{\psdot[dotscale=1.2](0,0)}}  
  \uput[90](1.5,0){$4$}

  \rput(.75,-1.5){\rnode{n4}{\psdot[dotscale=1.2](0,0)}}  
  \uput[-90](.75,-1.5){$3$}

  \ncline{n2}{n3}
  \naput{\raisebox{.5pt}{\textcircled{\raisebox{-.9pt} {\small $5$}}}}
  \ncline{n3}{n4}
  \naput{\raisebox{.5pt}{\textcircled{\raisebox{-.9pt} {\small $4$}}}}
  \ncline{n1}{n2}
  \naput{\raisebox{.5pt}{\textcircled{\raisebox{-.9pt} {\small $3$}}}}
}
\rput(7.5,6){\textbf{Third Step}}
  \rput(1,4.5){
    \rput(-1,0){\rnode{n1}{\psdot[dotscale=1.2](0,0)}}  
    \uput[90](-1,0){$2$}
  \rput(0,0){\rnode{n2}{\psdot[dotscale=1.2](0,0)}}  
  \uput[90](0,0){$3$}
  \rput(1.5,0){\rnode{n3}{\psdot[dotscale=1.2](0,0)}}  
  \uput[90](1.5,0){$4$}

  \rput(.75,-1.5){\rnode{n4}{\psdot[dotscale=1.2](0,0)}}  
  \uput[-90](.75,-1.5){$1$}

  \ncline{n2}{n3}
  \naput{\raisebox{.5pt}{\textcircled{\raisebox{-.9pt} {\small $5$}}}}
  \ncline{n3}{n4}
  \naput{\raisebox{.5pt}{\textcircled{\raisebox{-.9pt} {\small $4$}}}}
  \ncline{n1}{n2}
  \naput{\raisebox{.5pt}{\textcircled{\raisebox{-.9pt} {\small $3$}}}}
  \ncline{n2}{n4}
  \nbput{\raisebox{.5pt}{\textcircled{\raisebox{-.9pt} {\small $2$}}}}
}
\rput(2,2.1){\textbf{Fourth Step}}
  \rput(5,4.5){
    \rput(-1,0){\rnode{n1}{\psdot[dotscale=1.2](0,0)}}  
    \uput[90](-1,0){$2$}
  \rput(0,0){\rnode{n2}{\psdot[dotscale=1.2](0,0)}}  
  \uput[90](0,0){$4$}
  \rput(1.5,0){\rnode{n3}{\psdot[dotscale=1.2](0,0)}}  
  \uput[90](1.5,0){$3$}

  \rput(.75,-1.5){\rnode{n4}{\psdot[dotscale=1.2](0,0)}}  
  \uput[-90](.75,-1.5){$1$}

  \ncarc[arcangle=20]{n3}{n2}
  \naput{\raisebox{.5pt}{\textcircled{\raisebox{-.9pt} {\small $1$}}}}
  \ncline{n3}{n4}
  \naput{\raisebox{.5pt}{\textcircled{\raisebox{-.9pt} {\small $4$}}}}
  \ncline{n1}{n2}
  \naput{\raisebox{.5pt}{\textcircled{\raisebox{-.9pt} {\small $3$}}}}
  \ncline{n2}{n4}
  \nbput{\raisebox{.5pt}{\textcircled{\raisebox{-.9pt} {\small $2$}}}}
  \ncarc[arcangle=20]{n2}{n3}
  \naput{\raisebox{.5pt}{\textcircled{\raisebox{-.9pt} {\small $5$}}}}
}
\rput(6,2.1){\textbf{Fifth Step}}
\end{pspicture}
\end{center}
\caption{Using The Graph Algorithm To Find The Dual.}
\label{graphAlg}
\end{figure}

\section{Alternate Proof For Goulden/Yong Bijection}
\label{sec_alt}

In this section we use our framework to provide a bijection between the vertex labeled
trees and the factorizations of $(n, \ldots, 2,1)$ into $n-1$ transpositions; the bijection enjoys the same properties as the bijection from \cite{GouldenYong}.  In the first subsection we define the function
and show that it is a bijection; in the second subsection, we define and prove
structural properties possessed by this bijection.
In Section~\ref{sec_gydual}, we will show that our bijection is in fact the same 
as the bijection of \cite{GouldenYong}.

\subsection{The Bijection}

We define our bijection as a composition of two functions:
the dual and a (to-be-defined) relabeling function.
In Definition~\ref{def_FT} we defined \Fdown{n}; we now define
\Fup{n} to be the set of length $n-1$ transposition sequences over $\sym_n$ with product $(1,2, \ldots, n)$.
Immediate from D{\'e}nes~\cite{Denes1959} we have the following theorem; the coherence of the subsequent definitions and discussion depends on this fact.
\begin{thm} \cite{Denes1959} \label{thmDenes}
The graphs in \Fup{n} and \Fdown{n} are trees.

\end{thm}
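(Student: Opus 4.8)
The plan is to show that the graph \graph{G} associated to each sequence in \Fdown{n} is a tree by verifying the two ingredients of the standard characterization of trees: \graph{G} has $n$ vertices and $n-1$ edges, and \graph{G} is connected. The case of \Fup{n} is identical, since $(1,2,\ldots,n)$ is also an $n$-cycle and only the transitivity of the product is used below. The edge count is immediate: a length $n-1$ transposition sequence over $\sym_n$ yields a graph on the vertex set $[n]$ with exactly $n-1$ edges. Since a connected graph on $n$ vertices has at least $n-1$ edges, with equality precisely for trees, establishing connectedness finishes the proof.

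The heart of the argument is to deduce connectedness from the fact that the product is an $n$-cycle, hence a transitive permutation of $[n]$. First I would observe that $\product(\ps{s})$ can only move an element within its own connected component of \graph{G}. Each transposition $\per{s_i} = (x_i, y_i)$ joins two vertices that lie in a common component (they are the endpoints of edge $i$) and fixes every other element. Applying the factors $\per{s_1}, \per{s_2}, \ldots$ in order to a fixed starting element $z$, the running value either stays put or moves to an edge-neighbor at each step, so it traces a walk in \graph{G}; hence $z$ and $\product(\ps{s})(z)$ always lie in the same component.

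Now I would use transitivity. Since $\product(\ps{s}) = (n, \ldots, 2, 1)$ is an $n$-cycle, the orbit $\{x, \product(\ps{s})(x), \product(\ps{s})^2(x), \ldots\}$ of any vertex $x$ is all of $[n]$; by the previous step every element of this orbit lies in the component of $x$, so that component is all of $[n]$ and \graph{G} is connected. Alternatively, one can read the same conclusion directly off Lemma~\ref{lem_trail_trajectory}: since $\product(\ps{s})$ sends $x$ to the end vertex of the trail $\wk_x$, the vertices $x$ and $\product(\ps{s})(x)$ are joined by a trail and so lie in a common component, which lets one avoid tracking the left-to-right composition convention entirely.

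The main obstacle is exactly this component-preservation step, that is, the precise link between the algebraic transitivity of the $n$-cycle and the combinatorial connectedness of \graph{G}; everything else (the edge count and the tree characterization) is routine. I would keep that step clean by phrasing it as the statement that the support of a product of transpositions splits along the connected components of its edge graph, which is exactly what is needed here.
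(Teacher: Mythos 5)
Your proposal is correct, but there is nothing in the paper to compare it against step by step: the paper offers no proof of Theorem~\ref{thmDenes}, stating it as ``immediate from D\'enes'' and citing the 1959 result. What you supply is a genuinely self-contained elementary argument for exactly the direction the paper needs: the graph of a sequence in \Fdown{n} has $n$ vertices and $n-1$ edges; applying the factors in order to any element traces a walk in the graph, so the product moves each element only within its connected component; transitivity of the $n$-cycle $(n,\ldots,2,1)$ then forces a single component; and a connected graph on $n$ vertices with $n-1$ edges is a tree. Each step is sound, and your remark that the orbit argument is insensitive to the left-to-right multiplication convention (or can be routed through Lemma~\ref{lem_trail_trajectory}, which identifies $\product(\ps{s})(x)$ with the endpoint of $\wk_x$) is apt. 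One point worth making explicit if you write this up: the paper's graphs allow multi-edges, so the final step should be read for multigraphs --- it still holds, since a connected multigraph contains a spanning tree with $n-1$ edges, and equality of edge counts forces the graph to coincide with that tree; in particular a repeated transposition in the sequence is impossible. What your argument buys is self-containedness for the one implication the paper uses; what the citation buys is brevity plus the rest of D\'enes's theorem (the converse that every labeled tree arises this way, and the count $n^{n-2}$), which the paper invokes elsewhere but which your proof does not, and need not, establish.
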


\noindent
For example, 
\seq{(2,3), (4,5), (3,6), (3,5), (1,6), (6,8), (8,9), (6,7)} in \Fdown{9}  is the tree of Figure~\ref{GY_tree} (ignoring the {\migt}s for now).

\begin{defn} \

\begin{itemize}

\item
$\dual: \Fdown{n} \rightarrow \Fup{n}$, takes an input
transposition sequence to its dual transposition sequence.

\vspace{3mm}

\item
$\slide:  \Fup{n} \rightarrow \VT_n$ is the function defined as follows:

\noindent
We begin with a labeled tree (i.e. vertices and edges are labeled).
Keep vertex ``$1$'' labeled ``$1$''.
For every other vertex $v$, let $e_v$ be the edge incident to $v$ that is closest to vertex ``1''.  Label $v$ by $1 + w$, where $w$ is the label on edge $e_v$.
After relabeling all the vertices, erase the edge labels.

\vspace{3mm}

\item
Our desired bijection $\bij: \Fdown{n} \rightarrow \VT_n$ is defined by:
$$\bij = \slide \circ \dual$$

\end{itemize}

\end{defn}

\noindent
See Figure~\ref{fig:slideex} for an example of the \slide \ function applied to the tree
$\seq{(4,5),(3,5),(5,6),(2,8),(2,7),(1,8),(2,6)}$, 
whose product is $(8,7, \ldots, 1)$.  
For an example of the entire bijection \bij, see Figure 2 of \cite{GouldenYong}.

\begin{figure}[htbp]
  \centering
  {\psset{unit=3.4}
\begin{pspicture}(1.2,0)(5.2,1.35)
  \rput(1.1,1.2){\rnode{1}{\psdot[dotscale=1.2](0,0)}}
    \rput(1.7,.7){\rnode{2}{\psdot[dotscale=1.2](0,0)}}
    \rput(2.7,.8){\rnode{3}{\psdot[dotscale=1.2](0,0)}}
    \rput(2.8,.3){\rnode{4}{\psdot[dotscale=1.2](0,0)}}
    \rput(2.55,.55){\rnode{5}{\psdot[dotscale=1.2](0,0)}}
    \rput(2.1,.6){\rnode{6}{\psdot[dotscale=1.2](0,0)}}
    \rput(1.6,.4){\rnode{7}{\psdot[dotscale=1.2](0,0)}}
    \rput(1.35,.9){\rnode{8}{\psdot[dotscale=1.2](0,0)}}
    \uput[45](1.35,.9){$8$}
    \uput[90](1.1,1.2){$1$}
    \uput[90](1.7,.7){$2$}
    \uput[90](2.7,.8){$3$}
    \uput[0](2.8,.3){$4$}
    \uput[120](2.55,.55){$5$}
    \uput[90](2.1,.6){$6$}
    \uput[-90](1.6,.4){$7$}
    \ncline{4}{5}
    \naput{\raisebox{.5pt}{\textcircled{\raisebox{-.9pt} {\small $1$}}}}
    \ncline{3}{5}
    \naput{\raisebox{.5pt}{\textcircled{\raisebox{-.9pt} {\small $2$}}}}
    \ncline{5}{6}
    \nbput{\raisebox{.5pt}{\textcircled{\raisebox{-.9pt} {\small $3$}}}}
    \ncline{2}{8}
    \nbput{\raisebox{.5pt}{\textcircled{\raisebox{-.9pt} {\small $4$}}}}
    \ncline{2}{7}
    \nbput{\raisebox{.5pt}{\textcircled{\raisebox{-.9pt} {\small $5$}}}}
    \ncline{1}{8}
    \naput{\raisebox{.5pt}{\textcircled{\raisebox{-.9pt} {\small $6$}}}}
    \ncline{2}{6}
    \naput{\raisebox{.5pt}{\textcircled{\raisebox{-.9pt} {\small $7$}}}}
\uput[-90](2.1,.25){$T$}
\rput(3.81,.74){
  \begin{pspicture}(3,1.5)
      \rput(1.1,1.2){\rnode{1}{\psdot[dotscale=1.2](0,0)}}
    \rput(1.7,.7){\rnode{2}{\psdot[dotscale=1.2](0,0)}}
    \rput(2.7,.8){\rnode{3}{\psdot[dotscale=1.2](0,0)}}
    \rput(2.8,.3){\rnode{4}{\psdot[dotscale=1.2](0,0)}}
    \rput(2.55,.55){\rnode{5}{\psdot[dotscale=1.2](0,0)}}
    \rput(2.1,.6){\rnode{6}{\psdot[dotscale=1.2](0,0)}}
    \rput(1.6,.4){\rnode{7}{\psdot[dotscale=1.2](0,0)}}
    \rput(1.35,.9){\rnode{8}{\psdot[dotscale=1.2](0,0)}}
    \uput[45](1.35,.9){$7$}
    \uput[90](1.1,1.2){$1$}
    \uput[90](1.7,.7){$5$}
    \uput[90](2.7,.8){$3$}
    \uput[0](2.8,.3){$2$}
    \uput[120](2.55,.55){$4$}
    \uput[90](2.1,.6){$8$}
    \uput[-90](1.6,.4){$6$}
    \ncline{4}{5}
    \ncline{3}{5}
    \ncline{5}{6}
    \ncline{2}{8}
    \ncline{2}{7}
    \ncline{1}{8}
    \ncline{2}{6}
  \end{pspicture}}
\uput[-90](4.41,.25){$\slide(T)$}
 \end{pspicture}}
  \caption{Example Of \ \slide}
  \label{fig:slideex}
\end{figure}

Since \dual \ is a bijection (by Lemma~\ref{lemDualTwice}), to show \bij \ is bijection,
it is enough to show that \slide \ is a bijection.

\begin{lem} 
$\slide$ is a bijection.
\end{lem}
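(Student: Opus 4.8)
The plan is to prove that $\slide$ is injective and then upgrade this to a bijection by a cardinality count. First I would record that $\slide$ changes only the labels of a tree, never its underlying shape, and check that it genuinely lands in $\VT_n$. Rooting the input tree at vertex $1$, each non-root vertex $v$ has a unique incident edge $e_v$ pointing toward $1$ (its parent edge), and $v \mapsto e_v$ is a bijection between the $n-1$ non-root vertices and the $n-1$ edges, since every edge is the parent edge of exactly one of its endpoints. Hence the new labels $\{1\} \cup \{\,1 + \text{label}(e_v) : v \neq 1\,\}$ are exactly $[n]$, each used once, so $\slide(T) \in \VT_n$. Because $\dual$ is a bijection (Lemma~\ref{lemDualTwice}) and $\Fdown{n}$ has $n^{n-2}$ elements by D\'enes~\cite{Denes1959}, we get $|\Fup{n}| = n^{n-2} = |\VT_n|$ (the latter by Cayley's formula); therefore it suffices to prove that $\slide$ is injective.

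Next I would analyze exactly what information about $T$ survives in $S = \slide(T)$. From $S$ one reads off the tree shape, the location of the root (the vertex still labeled $1$), and \emph{every} edge label, since the edge $e_v$ carries the label $(\text{new label of } v) - 1$. Consequently, if $\slide(T_1) = \slide(T_2)$, then $T_1$ and $T_2$ agree on their shape, on all their edge labels, and on the position of vertex $1$. The only thing that could still differ is the labeling of the remaining vertices — and indeed $\slide$ by itself discards those labels entirely, which is why the product condition defining $\Fup{n}$ must be doing the work of pinning them down.

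The crux, then, is to show that within $\Fup{n}$ the shape, the edge labels, and the position of the root determine all vertex labels. Here I would use the \migt s: by construction $\wk_x$ depends only on the shape and the edge labels, so $T_1$ and $T_2$ have identical \migt s. By Lemma~\ref{lem_trail_trajectory}, since the product $(1,2,\ldots,n)$ sends each $k$ to $k+1$, the \migt\ starting at the vertex labeled $k$ ends at the vertex labeled $k+1$. Thus, beginning at the common root (label $1$) and repeatedly following \migt s, the positions forced to carry labels $2, 3, \ldots, n$ are determined in both trees simultaneously, and because $(1,2,\ldots,n)$ is an $n$-cycle this sequence visits every vertex exactly once. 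Hence $T_1$ and $T_2$ have identical vertex labels, so $T_1 = T_2$ and $\slide$ is injective, completing the proof. The main obstacle is exactly this last step: recognizing that the transitive $n$-cycle structure of the product is precisely what removes the ambiguity left after $\slide$ forgets all vertex labels except the root's. (One could instead attempt to construct the inverse of $\slide$ directly, but that route additionally requires proving that the permutation read off from an arbitrary $S \in \VT_n$ is always a single $n$-cycle — the harder, surjectivity side — which the counting argument neatly sidesteps.)
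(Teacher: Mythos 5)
Your proof is correct, but it takes a genuinely different route from the paper's. The paper proves the lemma by exhibiting the inverse map directly: given a vertex-labeled tree in $\VT_n$, label each parent edge $e_v$ by the label of $v$ minus $1$, erase all vertex labels except that of vertex $1$, and then recover the vertex labels by following the \migt{s} starting from vertex $1$. Your injectivity argument uses the same key insight --- for a tree in \Fup{n}, the root together with the edge labels forces every vertex label, since by Lemma~\ref{lem_trail_trajectory} the \migt\ starting at the vertex labeled $k$ must end at the vertex labeled $k+1$ --- but you deploy it to prove injectivity rather than to construct the inverse, and you then obtain surjectivity for free from the count $|\Fup{n}| = n^{n-2} = |\VT_n|$ (D\'enes plus Cayley). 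The trade-off is real in both directions. The paper's construction, to be fully rigorous, must verify that its inverse is well defined: for an \emph{arbitrary} tree in $\VT_n$, the product of the derived edge transpositions must be a single $n$-cycle, so that the \migt\ relabeling reaches every vertex exactly once and the output lies in \Fup{n}; the paper leaves this point unstated, and your counting argument sidesteps it entirely (as you note at the end of your proposal). The cost is that your proof presupposes D\'enes' enumeration $|\Fdown{n}| = n^{n-2}$, which the paper's proof does not use; that independence is exactly what allows the paper to remark, immediately after the lemma, that the bijection \bij\ itself \emph{shows} $|\Fdown{n}| = n^{n-2}$. Under your proof that remark would become circular, so the bijection would no longer serve as an independent derivation of D\'enes' formula.
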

\begin{proof}

To see that \slide \ is a bijection we define its inverse function.
We begin with a vertex labeled tree.  For every vertex $v$, except the vertex labeled ``1'', let $e_v$ be the edge incident to $v$ that is closest to ``1''.  If vertex $v$ is labeled by $w$, then label edge $e_v$ by $w-1$.  Erase all the vertex labels except ``1''.  Now label the vertices using the \migt{s}, i.e. follow $\wk_1$ from ``1'' to its final vertex, labeling it ``2'', then follow $\wk_2$ from ``2'' to determine which vertex to label ``3'', and so on.
\end{proof}

As noted by other authors (e.g. \cite{GouldenYong}),
since it is known that $|\VT_n| = n^{n-2}$, the bijection \bij \ shows that
$|\Fdown{n}| = n^{n-2}$.

\subsection{The Structural Property of the Bijection}


Now we review a structural property of the bijection, defined in \cite{GouldenYong}, showing that our bijection has this property.

\begin{defn} 

Suppose the transposition sequence (over $\sym_n$) $\ps{s} = \seq{\per{s_1}, \ldots, \per{s_{n-1}}}$ is a tree, and so for any $\per{s_k} = (x, y)$, we can write the product of \ps{s} as
$(x, x_1, \ldots, x_a, y, y_1, \ldots, y_b)$.

\begin{itemize}

\item 
We let \cpart{\ps{s}}{\per{s_k}} be the partition  
$\{ \ \{x, x_1, \ldots, x_a \} , \{y, y_1, \ldots, y_b \} \ \}$.

\item 
We let \fpart{\ps{s}}{\per{s_k}} be the partition
of the vertices of the tree into two sets:
When we remove the edge \per{s_k} from the tree, we take the vertices 
in each connected component.

\end{itemize}
 
\end{defn}

\begin{defn} 
Suppose that \per{t} is some transposition in the transposition sequence \ps{s},
where \ps{s} is a tree.

\begin{itemize}

\item
$\cnorm(\per{t}) = \hbox{min}(|A|, |B|)$, where \cpart{\ps{s}}{\per{t}} is 
the partition $\{ A, B\}$.

\item
$\fnorm(\per{t}) = \hbox{min}(|A|, |B|)$, where \fpart{\ps{s}}{\per{t}} is 
the partition $\{ A, B\}$.

\end{itemize}

\end{defn}

\noindent
For example, consider Figure~\ref{fig:slideex} and let $\per{t} = (2,8)$.
Since removing edge $\{ 2,8 \}$ from the tree leads to the vertex partition $\{ \ \{1, 8 \}, \ \{ 2,3,4,5,6,7  \} \  \}$, $\fnorm(\per{t}) = 2$.
In the corresponding permutation cycle $(8,7, \ldots, 1)$, the transposition \per{t} creates the partition
$\{ \ \{1, 2 \}, \ \{ 3,4,5,6,7,8  \} \  \}$, so $\cnorm(\per{t}) = 2$.
In \cite{GouldenYong}, the \cnorm \ is  called the \emph{difference index} 
and the \fnorm \ is called the \emph{edge-deletion index}.

Now we show that the bijection has the desired structural property,
by first proving a stronger property of the dual.

\begin{thm} \label{lemma_structural}
Suppose $\ps{s} = \seq{s_1, \ldots, s_{n-1}}$ is a transposition sequence (over $\sym_n$)
which is a tree, and suppose $\ps{s}' = \seq{s'_1, \ldots, s'_{n-1}}$ is its dual.
Then for $k = 1, \ldots, {n-1}$ we have that
$\fpart{\ps{s}}{\per{s_k}} = \cpart{\ps{s'}}{\per{s'_k}}$ and
$\cpart{\ps{s}}{\per{s_k}} = \fpart{\ps{s'}}{\per{s'_k}}$.

\end{thm}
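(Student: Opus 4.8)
The plan is to prove just one of the two displayed equalities and then obtain the other for free using idempotence of the dual. I claim it suffices to show, for \emph{every} tree transposition sequence $\ps{t}$, that $\cpart{\ps{t}}{\per{t_k}} = \fpart{\ps{t'}}{\per{t'_k}}$; call this $(\ast)$. Indeed, since $\ps{s''} = \ps{s}$ by Lemma~\ref{lemDualTwice}, and since $\product(\ps{s'}) = \product(\ps{s})^{-1}$ is again an $n$-cycle by Lemma~\ref{lem_inverse} (an $(n-1)$-edge graph with an $n$-cycle product is connected, hence a tree, so $\ps{s'}$ is again a tree), I may apply $(\ast)$ with $\ps{t} = \ps{s'}$. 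Then $\ps{t'} = \ps{s}$, $\per{t_k} = \per{s'_k}$, and $\per{t'_k} = \per{s_k}$, so $(\ast)$ reads $\cpart{\ps{s'}}{\per{s'_k}} = \fpart{\ps{s}}{\per{s_k}}$, which is exactly the remaining equality. So I focus on proving $\cpart{\ps{s}}{\per{s_k}} = \fpart{\ps{s'}}{\per{s'_k}}$.

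To do this I would work entirely inside the dual tree $\ps{s'}$, using its \migt s. Write $\per{s_k} = (x,y)$ and $\product(\ps{s}) = (x, x_1, \ldots, x_a, y, y_1, \ldots, y_b)$, so that $\cpart{\ps{s}}{\per{s_k}} = \{A, B\}$ with $A = \{x, x_1, \ldots, x_a\}$ and $B = \{y, y_1, \ldots, y_b\}$; put $\phi = \product(\ps{s'})$, the reverse cycle by Lemma~\ref{lem_inverse}. The first key step is to identify which two \migt s of $\ps{s'}$ traverse the edge labeled $k$. Applying the Trail Dual characterization (Theorem~\ref{thmTrailDualChar}) to $\ps{s'}$ together with $(\ps{s'})' = \ps{s}$: the edge labeled $k$ of $\ps{s}$ joins $x$ and $y$ \emph{iff} the two \migt s of $\ps{s'}$ that use edge $k$ are precisely $\wk_x^{(\ps{s'})}$ and $\wk_y^{(\ps{s'})}$. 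By Lemma~\ref{lem_trail_trajectory} these run from $x$ to $\phi(x)$ and from $y$ to $\phi(y)$, and reading off the cycle gives $\phi(x) = y_b$ and $\phi(y) = x_a$.

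The second key step is a connectivity argument. In a tree a trail cannot revisit a vertex (a revisit would force a cycle), so every \migt\ of $\ps{s'}$ is a simple path; removing the edge $\per{s'_k}$ splits $\ps{s'}$ into exactly two components, and a simple path has its two endpoints in different components precisely when it uses that edge. Since the \migt s form a Trail Double Cover (Lemma~\ref{lemmagreedypartition}), edge $k$ is used by exactly the two paths $\wk_x^{(\ps{s'})}$ and $\wk_y^{(\ps{s'})}$; hence for every vertex $w \notin \{x,y\}$ the path $\wk_w^{(\ps{s'})}$ stays inside one component, so $w$ and $\phi(w)$ lie in the same component. Chaining this relation along the reverse cycle $\phi$ — using $\phi(x_i) = x_{i-1}$ for $i = 1, \ldots, a$ (with $x_0 = x$) and $\phi(y_j) = y_{j-1}$ for $j = 1, \ldots, b$ (with $y_0 = y$) — shows that all of $A$ lies in one component and all of $B$ in the other. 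Finally, because $\wk_x^{(\ps{s'})}$ runs from $x \in A$ to $\phi(x) = y_b \in B$ while crossing edge $k$, the components of $A$ and $B$ are genuinely distinct; as $A \cup B = [n]$ and there are only two components, $\{A, B\} = \fpart{\ps{s'}}{\per{s'_k}}$, as desired.

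I expect the main obstacle to be the bookkeeping in the first key step: pinning down \emph{exactly} which two \migt s of the dual cross edge $k$, and matching their endpoints $x, \phi(x) = y_b$ and $y, \phi(y) = x_a$ to the correct arcs $A$ and $B$ of the cycle. Everything else — simple-path-ness of trails in trees, the Double-Cover count forcing all other paths to stay within a component, and the chaining along $\phi$ — is routine once the endpoints are correctly located. One should also verify the degenerate cases $a = 0$ or $b = 0$, where $x_a = x$ or $y_b = y$, but these collapse the chaining harmlessly.
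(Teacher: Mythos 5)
Your proof is correct, and it takes a genuinely different route from the paper's. The paper argues by induction on the number of edges: it deletes the edge $\per{s_1}$, splitting the tree into two subtrees $\graph{T}_x$ and $\graph{T}_y$, applies the inductive hypothesis to the subtree containing $\per{s_k}$, and transfers the result back to the whole tree via the increment formula of Lemma~\ref{lemDualIncrement} (namely $\per{s'_k} = (\per{s^*_k})^{\per{s_1}}$), with explicit bookkeeping of how the two subtree cycles interleave inside $\product(\ps{s})$ and $\product(\ps{s'})$, plus a separate case for $\per{s_k} = \per{s_1}$. You avoid induction entirely and work globally in the dual tree: idempotence (Lemma~\ref{lemDualTwice}) combined with the Trail Dual characterization (Theorem~\ref{thmTrailDualChar}) and the double-cover property (Lemma~\ref{lemmagreedypartition}) pins down $\wk^{(\ps{s'})}_x$ and $\wk^{(\ps{s'})}_y$ as the only two \migt{s} of $\ps{s'}$ through the edge labeled $k$; since trails in a tree are simple paths, every other trail avoids that edge, so each remaining vertex lies in the same component (of $\ps{s'}$ minus that edge) as its image under the inverse cycle $\product(\ps{s'})$ (Lemmas~\ref{lem_inverse} and \ref{lem_trail_trajectory}), and chaining along that cycle places the two arcs $A$ and $B$ into the two distinct components. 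Your route buys a conceptual, computation-free argument with no case analysis, and it makes explicit a fact the paper leaves implicit, namely that the dual of a tree is again a tree (your connectivity argument is the right one). The paper's route, by contrast, stays within the algebraic machinery of Section~\ref{sec_alg} and never needs the trail apparatus of Section~\ref{sec_graphs}. Note also that the two proofs run the idempotence reduction in opposite directions: the paper proves $\fpart{\ps{s}}{\per{s_k}} = \cpart{\ps{s'}}{\per{s'_k}}$ directly and derives the other equality, whereas you prove $\cpart{\ps{s}}{\per{s_k}} = \fpart{\ps{s'}}{\per{s'_k}}$ directly; the two halves are interchangeable via Lemma~\ref{lemDualTwice}, exactly as you observe.
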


\begin{proof}

We show $\fpart{\ps{s}}{\per{s_k}} = \cpart{\ps{s'}}{\per{s'_k}}$ 
($\cpart{\ps{s}}{\per{s_k}} = \fpart{\ps{s'}}{\per{s'_k}}$ then follows using Lemma~\ref{lemDualTwice}).
We proceed by induction on the length of the transposition sequence,
considering the inductive step.
Consider $\ps{t} = \seq{\per{s_2}, \ldots, \per{s_{n-1}}}$ and suppose $\per{s_1} = (x,y)$.
So \ps{t} is two trees, say $\graph{T}_x$ and $\graph{T}_y$, where $\graph{T}_x$ is the tree
containing $x$ and $\graph{T}_y$ is the tree containing $y$.
The product of $\graph{T}_x$ is some permutation cycle $C_x = (x, x_1, \ldots, x_a)$
and the product of $\graph{T}_y$ is some permutation cycle $C_y = (y, y_1, \ldots, y_b)$.
Thus the product of \ps{s} is $C = (x, y_1, \ldots, y_b, y, x_1, \ldots, x_a)$
and so by Lemma~\ref{lem_inverse}, the product of $\ps{s}'$ is $C' = (x_a, \ldots, x_1, y, y_b, \ldots, y_1, x)$. We now demonstrate that 
$\fpart{\ps{s}}{\per{s_k}} = \cpart{\ps{s'}}{\per{s'_k}}$.

Consider the case in which $\per{s_k} = \per{s_1}$. Then
$\fpart{\ps{s}}{\per{s_1}} =  $

\noindent
$\{ \ \{x, x_1, \ldots, x_a \}, \{y, y_1, \ldots, y_b\} \ \}
= \cpart{\ps{s'}}{\per{s'_1}}$,
where we get the latter equality by noting that $\per{s'_1} = \per{s_1} = (x,y)$ and 
recalling the value of $C'$.

Now we consider the case in which $\per{s_k} \neq \per{s_1}$, and suppose, without loss of generality, that
$\per{s_k}$ is in $\graph{T}_x = \seq{\per{s_{i_1}}, \ldots, \per{s_{i_c}}}$.  
We remark that for $\graph{T}_x$ and \ps{t} we will keep the edge labels coming from the original tree \ps{s} (so for example, edge \per{s_2} in \ps{t} is labeled $2$, not $1$, and \per{s_{i_1}} in $\graph{T}_x$ is labeled $i_1$, not $1$);
all the relevant definitions and facts work in the same manner for such edge labellings.
Let the dual of $\graph{T}_x$ be  
$\graph{T}^*_x = \seq{\per{s^*_{i_1}}, \ldots, \per{s^*_{i_c}}}$, whose product, by Lemma~\ref{lem_inverse}, is the permutation cycle $C'_x = (x_a, \ldots, x_1, x)$.
Suppose the dual of \ps{t} is $\ps{t}^* = \seq{\per{s^*_2}, \ldots, \per{s^*_{n-1}}}$; note that since \ps{t} consists of two disjoint graphs, for any \per{s_k} in $\graph{T}_x$, \per{s_k^*} is indeed the same in $\graph{T}^*_x$ and $\ps{t}^*$.
Suppose $\per{s_k^*} = (x_i, x_j)$, 
where $0 \le i < j$ (understanding $x_0$ to be $x$).
We can conclude that $\fpart{\graph{T}_x}{\per{s_k}} = \cpart{\graph{T}^*_x}{\per{s^*_k}} =
\{ \ \{ x_j, \ldots, x_{i+1} \},  \{ x_i, \ldots, x_1,\emph{x},x_a, \ldots,x_{j+1}  \}   \ \}$,
where the first equality holds by inductive hypothesis and second by definition,
recalling the value of $C'_x$.
Now consider the entire tree \ps{s}, consisting of $\graph{T}_x$ and $\graph{T}_y$ joined by edge 
\per{s_1}.  When edge \per{s_k} is removed from \ps{s}, all the vertices of $\graph{T}_y$ will be in the component with vertex $x$, so to get \fpart{\ps{s}}{\per{s_k}} we just add the vertices of $\graph{T}_y$ to the 
appropriate piece of the above partition,  
so $\fpart{\ps{s}}{\per{s_k}} = 
\{ \ \{ x_j, \ldots, x_{i+1}  \} , 
\{ x_i, \ldots, x_1, x, x_a, \ldots, x_{j+1}, y, y_1, \ldots, y_b \} \ \}$.
We now show that \cpart{\ps{s}'}{\per{s'_k}} is the same partition.
By Lemma~\ref{lemDualIncrement}, $\per{s'_k} = (\per{s^*_k})^{\per{s_1}}$.  So if $x_i \neq x$, then $\per{s'_k} = (x_i, x_j)$, and if $x_i = x$ then $\per{s'_k} = (y, x_j)$.  In either case, recalling that
$$C' = (x_a, \ldots, x_j, \ldots, x_i, \ldots, x_1, y, y_b, \ldots, y_1, x),$$
we see that 
 \cpart{\ps{s'}}{\per{s'_k}} is the same as \fpart{\ps{s}}{\per{s_k}}.
\end{proof}

\junk{ 
OLD PROOF
\begin{proof}

We show $\fpart{\ps{s}}{\per{s_k}} = \cpart{\ps{s'}}{\per{s'_k}}$ 
($\cpart{\ps{s}}{\per{s_k}} = \fpart{\ps{s'}}{\per{s'_k}}$ then follows using Lemma~\ref{lemDualTwice}).
We proceed by induction on the length of the transposition sequence,
considering the inductive step.
Consider $\ps{t} = \seq{\per{s_2}, \ldots, \per{s_{n-1}}}$ and suppose $\per{s_1} = (x,y)$.
So \ps{t} is two trees, say $\graph{T}_x$ and $\graph{T}_y$, where $\graph{T}_x$ is the tree
containing $x$ and $\graph{T}_y$ is the tree containing $y$.
The product of $\graph{T}_x$ is some permutation cycle $C_x = (x, x_1, \ldots, x_a)$
and the product of $\graph{T}_y$ is some permutation cycle $C_y = (y, y_1, \ldots, y_b)$.
Thus the product of \ps{s} is $C = (x, y_1, \ldots, y_b, y, x_1, \ldots, x_a)$
and so by Lemma~\ref{lem_inverse}, the product of $\ps{s}'$ is $C' = (x_a, \ldots, x_1, y, y_b, \ldots, y_1, x)$. We now demonstrate that 
$\fpart{\ps{s}}{\per{s_k}} = \cpart{\ps{s'}}{\per{s'_k}}$.

Consider $\per{s_k} = \per{s_1}$. Then
$\fpart{\ps{s}}{\per{s_1}} =  \{ \ \{x, x_1, \ldots, x_a \}, \{y, y_1, \ldots, y_b\} \ \}
= \cpart{\ps{s'}}{\per{s'_1}}$,
where we get the latter equality by noting that $\per{s'_1} = \per{s_1} = (x,y)$ and 
recalling the value of $C'$.

Now we consider $\per{s_k} \neq \per{s_1}$ and suppose
$\per{s_k} = (v, w)$ is in $\graph{T}_x$ (the case of $\graph{T}_y$ is the same).
$\graph{T}'_x$, which denotes the dual of $\graph{T}_x$, has the permutation cycle $C'_x = (x_a, \ldots, x_1, x)$ as its product,  by Lemma~\ref{lem_inverse}.
Let $\per{s_k^*} = (x_i, x_j)$ be the dual of $\per{s_k}$ in $\graph{T}_x$, 
where $0 \le i < j$ (understanding $x_0$ to be $x$).
Note that \per{s_k^*} is also the dual of $\per{s_k}$ in \ps{t},  since \ps{t} consists of two disjoint graphs.
We can conclude that $\fpart{\graph{T}_x}{\per{s_k}} = \cpart{\graph{T}'_x}{\per{s^*_k}} =
\{ \ \{ x_j, \ldots, x_{i+1} \},  \{ x_i, \ldots, x_1,\emph{x},x_a, \ldots,x_{j+1}  \}   \ \}$,
where the first equality holds by inductive hypothesis and second by definition,
recalling the value of $C'_x$.
Considering edge $\per{s_k} = (v, w)$, suppose $w$ is the vertex closer to $x$ in $\graph{T}_x$.
Now consider the entire tree \ps{s}, consisting of $\graph{T}_x$ and $\graph{T}_y$ joined by edge 
\per{s_1}.  Since all the vertices of $\graph{T}_y$ are on the side of vertex $w$, to get \fpart{\graph{T}_x}{\per{s_k}} we just add the vertices of $\graph{T}_y$ to the 
appropriate piece of the above partition,  
so $\fpart{\ps{s}}{\per{s_k}} = 
\{ \ \{ x_j, \ldots, x_{i+1}  \} , \{ x_a, \ldots, x_{j+1}, x_i, \ldots, x_0, y_0, \ldots, y_b  \} \ \}$.
We now show that \cpart{\ps{s}'}{\per{s'_k}} is the same partition.
By Lemma~\ref{lemDualIncrement}, $\per{s'_k} = (\per{s^*_k})^{\per{s_1}}$.  So if $x_i \neq x$, then $\per{s'_k} = (x_i, x_j)$, and if $x_i = x$ then $\per{s'_k} = (y, x_j)$.  In either case, recalling that
$$C' = (x_a, \ldots, x_j, \ldots, x_i, \ldots, x_1, y, y_b, \ldots, y_1, x),$$
we see that 
 \cpart{\ps{s'}}{\per{s'_k}} is the same as \fpart{\ps{s}}{\per{s_k}}.
\end{proof}
} 

The bijection $\bij: \Fdown{n} \rightarrow \VT_n$  first takes a transposition sequence 
$\ps{s} = \seq{\per{s_1}, \ldots, \per{s_{n-1}}}$ to its dual
$\ps{s'} = \seq{\per{s'_1}, \ldots, \per{s'_{n-1}}}$.
By Theorem~\ref{lemma_structural}, for every \per{s_i}, the partitions \cpart{\ps{s}}{\per{s_i}} and 
\fpart{\ps{s'}}{\per{s'_i}} are the same, and so immediately,
$\cnorm(\per{s_i})  = \fnorm(\per{s'_i})$.
Then \bij \ just rearranges the labels, so any transposition \per{s_i} in \ps{s}
has a corresponding edge $e_i$ in the tree $\bij(\ps{s})$ such that
$|\cnorm(\per{s_i})| = |\fnorm(e_i)|$; 
technically we defined \fnorm \ for
transposition sequences, i.e. labeled trees, however the same
basic definition works for a tree in $\VT_n$.
Thus we have given an alternative proof of
the following theorem, which was the main result of \cite{GouldenYong}.

\begin{thm}
The function $\bij: \Fdown{n} \rightarrow \VT_n$ is a bijection with the following property:
Suppose $\ps{s} = \seq{\per{s_1}, \ldots, \per{s_{n-1}}}$ is in \Fdown{n}, 
and $\graph{T} = \bij(\ps{s})$, where \graph{T} has edges $\{e_1, \ldots, e_{n-1} \}$.
Then 
$$\{ |\cnorm(\per{s_1})|, \ldots, |\cnorm(\per{s_{n-1}})| \} =
\{ |\fnorm(e_1)|, \ldots, |\fnorm(e_{n-1})| \}.
$$

\end{thm}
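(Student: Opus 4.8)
The plan is to exploit the decomposition $\bij = \slide \circ \dual$ together with the structural identity already established in Theorem~\ref{lemma_structural}, reducing the statement to a termwise equality of indices that is in fact stronger than the asserted multiset equality. First I would record that $\bij$ is a bijection: $\dual$ is a bijection by Lemma~\ref{lemDualTwice} (it is an involution, and by Theorem~\ref{thmDenes} it sends $\Fdown{n}$ into $\Fup{n}$), while $\slide$ is a bijection by the lemma establishing that fact. A composition of bijections is a bijection, so $\bij$ is one.

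For the index-matching property, I would fix $\ps{s} = \seq{\per{s_1}, \ldots, \per{s_{n-1}}}$ in $\Fdown{n}$ and write $\ps{s}' = \dual(\ps{s}) = \seq{\per{s'_1}, \ldots, \per{s'_{n-1}}}$. By Theorem~\ref{lemma_structural}, for each $k$ we have $\cpart{\ps{s}}{\per{s_k}} = \fpart{\ps{s'}}{\per{s'_k}}$. Since both $\cnorm$ and $\fnorm$ are defined as the minimum of the two block sizes of the relevant partition, this gives immediately $\cnorm(\per{s_k}) = \fnorm(\per{s'_k})$ for every $k$ (the absolute values in the statement are harmless, as these quantities are already nonnegative integers).

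The remaining step is to transport this equality through $\slide$ to $\graph{T} = \slide(\ps{s}') = \bij(\ps{s})$. The map $\slide$ only relabels vertices and then erases edge labels; it leaves the underlying tree structure untouched, so the edge carrying label $k$ in $\ps{s}'$, namely the transposition $\per{s'_k}$, corresponds to a unique edge $e_k$ of $\graph{T}$ joining the same two physical vertices. Because $\fnorm$ of an edge depends only on the sizes of the two connected components obtained by deleting it — data determined entirely by the tree structure, not by any labeling — the extension of $\fnorm$ to trees in $\VT_n$ satisfies $\fnorm(e_k) = \fnorm(\per{s'_k})$. Chaining the two equalities yields $\cnorm(\per{s_k}) = \fnorm(e_k)$ for every $k$, a termwise statement that at once implies the claimed set (indeed multiset) identity.

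The main obstacle lies not in any computation but in the bookkeeping of the last paragraph: one must verify that $\slide$ genuinely preserves the edge-to-edge correspondence indexed by $k$ and that it does not disturb the component-size data underlying $\fnorm$. Both facts are immediate once one observes that $\slide$ is purely a relabeling, so that deleting $e_k$ from $\graph{T}$ produces exactly the two vertex sets (up to relabeling) produced by deleting $\per{s'_k}$ from $\ps{s}'$. With Theorem~\ref{lemma_structural} in hand, the theorem is therefore just a matter of tracking labels through the two constituent maps.
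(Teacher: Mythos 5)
Your proposal is correct and follows essentially the same route as the paper: it establishes bijectivity from the bijectivity of \dual\ (Lemma~\ref{lemDualTwice}) and \slide, invokes Theorem~\ref{lemma_structural} to get the termwise equality $\cnorm(\per{s_k}) = \fnorm(\per{s'_k})$, and then observes that \slide\ is a pure relabeling which preserves the component-size data underlying \fnorm, yielding $\cnorm(\per{s_k}) = \fnorm(e_k)$ and hence the multiset identity. The only quibble is a citation slip: the fact that \dual\ sends \Fdown{n} into \Fup{n} follows from Lemma~\ref{lem_inverse} (the dual's product is the inverse), not from Theorem~\ref{thmDenes}.
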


\section{Goulden-Yong Dual}
\label{sec_gydual}

In \cite{GouldenYong}, they define a dual that applies only to trees, using topological methods.  When restricted to trees, we show that their dual is
the same as our dual.

\begin{defn} \cite{GouldenYong}
Given a labeled tree \graph{T} on $n$ vertices, its
\dword{Circle Chord Diagram},  is the following structure:
\begin{quote}
A circle, together with $n$ distinct points on the circle, labeled by the numbers $1, \ldots, n$, in the clockwise direction, 
 drawing a chord between $x$ and $y$ if there is an edge between $x$ and $y$ in \graph{T}.  
\end{quote}

\end{defn}
Consider the tree shown in Figure~\ref{GY_tree} (it is the same as the example in
\cite{GouldenYong}); its \migt{s} are drawn in, but can be ignored for now.  The Circle Chord Diagram for the tree of Figure~\ref{GY_tree}
is shown in Figure~\ref{circleChord}.
Notice that the chords in Figure~\ref{circleChord} are \dword{non-crossing},
i.e. any two chords either do not meet, or only meet at a vertex on the circle.  For a tree with $n$ vertices
and non-crossing chords, its Circle Chord Diagram has the following properties:

\begin{itemize}

\item
The $n$ vertices on the circle, break up the circle into \dword{$n$ arcs}, i.e.
the arc between $1$ and $2$ (we call arc 2), the arc between $2$ and $3$ (we call arc 3), and so on, calling the arc between $n$ and $1$, arc 1. 

\item
The chords break up the region inside the circle into $n$ regions, each containing
one of the $n$ arcs; we refer to this region with arc $k$ as \dword{region $k$}.

\end{itemize}

\begin{figure}
\begin{center}
\psset{unit=2,arrowsize=.1}
  \begin{pspicture}(0,.2)(3.2,-3.2)
       \rput(3.0, -1){\rnode{1}{\psdot[dotscale=1.3](0,0)}}
         \uput[-90](3.0, -1){$1$}
       \rput(2.5, -2){\rnode{2}{\psdot[dotscale=1.3](0,0)}}
         \uput[-90](2.5, -2){$2$}
       \rput(2.0, -1){\rnode{3}{\psdot[dotscale=1.3](0,0)}}
         \uput[0](2.0, -1){$3$}
       \rput(1.5, -3){\rnode{4}{\psdot[dotscale=1.3](0,0)}}
         \uput[-90](1.5, -3){$4$}       
       \rput(1.5, -2){\rnode{5}{\psdot[dotscale=1.3](0,0)}}
         \uput[180](1.5, -2){$5$}
       \rput(1.5, 0){\rnode{6}{\psdot[dotscale=1.3](0,0)}}
         \uput[90](1.5, 0){$6$}
       \rput(1.0, -1){\rnode{7}{\psdot[dotscale=1.3](0,0)}}
         \uput[-90](1.0, -1){$7$}
       \rput(0.0, -1){\rnode{8}{\psdot[dotscale=1.3](0,0)}}
         \uput[180](0.0, -1){$8$}
       \rput(0.0, -2){\rnode{9}{\psdot[dotscale=1.3](0,0)}}
         \uput[-90](0.0, -2){$9$}
     \ncline{1}{6}
       \ncput*{\tiny $5$}
     \ncline{2}{3}
       \ncput*{\tiny $1$}
     \ncline{3}{5}
       \ncput*{\tiny $4$}
     \ncline{3}{6}
       \ncput*{\tiny $3$}
     \ncline{4}{5}
       \ncput*{\tiny $2$}
     \ncline{7}{6}
       \ncput*{\tiny $8$}
     \ncline{8}{6}
       \ncput*{\tiny $6$}
     \ncline{8}{9}
       \ncput*{\tiny $7$}
       \psline[linecolor=blue,linearc=.15]{o->}(3,-.9)(1.5,.1)(-.1,-1)(-.1,-2)
       \psline[linecolor=purple,linearc=.15]{o->}(.1,-2)(.1,-1)
       \psline[linecolor=red,linearc=.03]{o->}(.1,-.99)(1.3,-.2)(.9,-1)
       \psline[linecolor=magenta,linearc=.03]{o->}(1.1,-1)(1.49,-.15)
       \psline[linecolor=deeppink,linearc=.15]{o->}(1.51,-.19)(1.9,-1)(1.4,-2)
       \psline[linecolor=green,linearc=.15]{o->}(1.4,-1.99)(1.4,-3)
       \psline[linecolor=darkgreen,linearc=.15]{o->}(1.6,-3)(1.6,-2)(2,-1.2)
       \psline[linecolor=darkred,linearc=.15]{o->}(2.03,-1.2)(2.4,-2)
       \psline[linecolor=orange,linearc=.03]{o->}(2.6,-2)(1.65,-.2)(2.9,-1)
  \end{pspicture}
\end{center}
\caption{A Tree From \Fdown{9} And Its \migt{s}}
\label{GY_tree}
\end{figure}

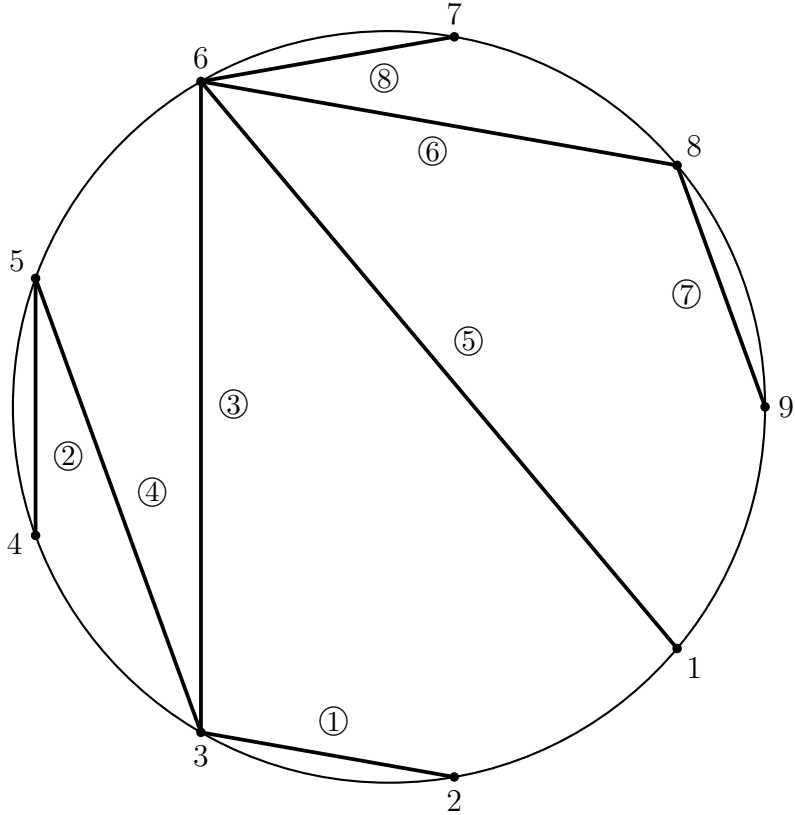
\begin{figure}
\begin{center}
{\psset{unit=5}
\begin{pspicture}(-1.02,-1.05)(1.02,1.05)
  \pscircle(0,0){1}
  \rput(0.766044443118978, -0.642787609686539){\rnode{1}{\psdot(0,0)}}	   
  \uput[315](0.766044443118978, -0.642787609686539){$1$}
  \rput(0.173648177666930, -0.984807753012208){\rnode{2}{\psdot(0,0)}}	   
  \uput[-90](0.173648177666930, -0.984807753012208){$2$}
  \rput(-0.500000000000000, -0.866025403784439){\rnode{3}{\psdot(0,0)}}   
  \uput[270](-0.500000000000000, -0.866025403784439){$3$}
  \rput(-0.939692620785908, -0.342020143325669){\rnode{4}{\psdot(0,0)}}   
  \uput[200](-0.939692620785908, -0.342020143325669){$4$}
  \rput(-0.939692620785908, 0.342020143325669){\rnode{5}{\psdot(0,0)}}	   
  \uput[140](-0.939692620785908, 0.342020143325669){$5$}
  \rput(-0.500000000000000, 0.866025403784439){\rnode{6}{\psdot(0,0)}}	   
  \uput[90](-0.500000000000000, 0.866025403784439){$6$}
  \rput(0.173648177666930, 0.984807753012208){\rnode{7}{\psdot(0,0)}}	   
  \uput[90](0.173648177666930, 0.984807753012208){$7$}
  \rput(0.766044443118978, 0.642787609686539){\rnode{8}{\psdot(0,0)}}	   
  \uput[45](0.766044443118978, 0.642787609686539){$8$}
  \rput(1.00000000000000, 0.000000000000000){\rnode{9}{\psdot(0,0)}}      
  \uput[0](1.00000000000000, 0.000000000000000){$9$}
  \ncline[linewidth=.01]{1}{6}
  \nbput{\raisebox{.5pt}{\textcircled{\raisebox{-.9pt} {\small $5$}}}}
  \ncline[linewidth=.01]{6}{8}
  \nbput{\raisebox{.5pt}{\textcircled{\raisebox{-.9pt} {\small $6$}}}}
  \ncline[linewidth=.01]{6}{7}
  \nbput[npos=.7]{\raisebox{.5pt}{\textcircled{\raisebox{-.9pt} {\small $8$}}}}
  \ncline[linewidth=.01]{6}{3}
  \naput{\raisebox{.5pt}{\textcircled{\raisebox{-.9pt} {\small $3$}}}}
  \ncline[linewidth=.01]{3}{5}
  \nbput{\raisebox{.5pt}{\textcircled{\raisebox{-.9pt} {\small $4$}}}}
  \ncline[linewidth=.01]{4}{5}
  \nbput[npos=.3]{\raisebox{.5pt}{\textcircled{\raisebox{-.9pt} {\small $2$}}}}
  \ncline[linewidth=.01]{2}{3}
  \nbput{\raisebox{.5pt}{\textcircled{\raisebox{-.9pt} {\small $1$}}}}
  \ncline[linewidth=.01]{8}{9}
  \nbput{\raisebox{.5pt}{\textcircled{\raisebox{-.9pt} {\small $7$}}}}
\end{pspicture}}
\end{center}
\caption{Circle Chord Diagram Of The Tree In Figure~\ref{GY_tree}.}
\label{circleChord}
\end{figure}

In \cite{GouldenYong}, multiplication in $\sym_n$ is from right-to-left, however their numbering of 
the transpositions in a transposition sequence is from left-to-right. We wanted both the labeling and the multiplication to go in the same order.  To make our work fit most smoothly with
their work, notice that we have opted to keep their numbering from left-to-right, but
have changed multiplication to also go from left-to-right.
Thus in \cite{GouldenYong}, when they refer to \emph{factorizations of $(1,2, \ldots, n)$
into $n-1$ transpositions}, in our terminology, they are referring to exactly the set 
\Fdown{n} from Definition~\ref{def_FT}.  Recall that from Theorem~\ref{thmDenes} we know 
that the transpositions sequences in \Fdown{n} are trees.  Thus it makes sense to find the 
Circle Chord Diagram of a transposition sequence from \Fdown{n}. 
In \cite{GouldenYong} (see Theorem 2.2) the following theorem is proved.

\begin{thm} \cite{GouldenYong} \label{thm_gy}
For any transposition sequence $\ps{s} \in \Fdown{n}$
its Circle Chord Diagram has the following properties:

\begin{enumerate}


\item The chords are non-crossing.

\item \label{item_decr_clock}
At each of the $n$ vertices on the circle, the labels of the incident chords decrease as we turn clockwise.


\end{enumerate}

\end{thm}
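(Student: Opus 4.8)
The plan is to prove a slightly more general statement by induction on the number of edges, peeling off the edge labeled $1$, in exactly the manner of the proof of Theorem~\ref{lemma_structural}. The generalization replaces ``product $(n,\ldots,1)$'' by: \emph{the product is a single cycle $D$ on the vertex set, and the vertices are placed on the circle so that their counterclockwise order matches the cyclic order of $D$}. For $\ps{s}\in\Fdown{n}$ this specializes correctly, since the product is $(n,n-1,\ldots,1)$ while the clockwise order is $1,2,\ldots,n$, so the counterclockwise order $1,n,n-1,\ldots,2$ is precisely the cyclic sequence of the product cycle. I would establish this orientation dictionary carefully at the very start, because the whole argument hinges on where the smallest edge sits angularly, and it is the easiest place to introduce a direction error.

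First I would set up the inductive step by writing $\per{s_1}=(x,y)$ and $\ps{t}=\seq{\per{s_2},\ldots,\per{s_{n-1}}}$, which splits into two trees $\graph{T}_x\ni x$ and $\graph{T}_y\ni y$ with products $C_x=(x,x_1,\ldots,x_a)$ and $C_y=(y,y_1,\ldots,y_b)$; the proof of Theorem~\ref{lemma_structural} already shows that the product of $\ps{s}$ is $C=(x,y_1,\ldots,y_b,y,x_1,\ldots,x_a)$. The key geometric consequence I would extract from $C$ is that $V(\graph{T}_x)=\{x,x_1,\ldots,x_a\}$ and $V(\graph{T}_y)=\{y,y_1,\ldots,y_b\}$ each occupy a contiguous arc of the circle, and that the induced circular order on each arc agrees with $C_x$, respectively $C_y$. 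Hence, after an order-preserving relabeling of the edges (which affects none of the relevant properties), the induction hypothesis applies verbatim to $\graph{T}_x$ and to $\graph{T}_y$.

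For the non-crossing property: every edge other than $\per{s_1}$ lies entirely inside $\graph{T}_x$ or inside $\graph{T}_y$, so its chord stays within one of the two arcs and cannot cross $\per{s_1}$, while chords lying in a single subtree are non-crossing by induction. For the decreasing-clockwise property: at every vertex other than $x$ and $y$ the incident edges and their circular order are inherited from a single subtree, so the claim is immediate. At $x$, reading clockwise from $x$ one meets the vertices of $\graph{T}_x$ first and then $y$, since the clockwise order from $x$ is $x_a,\ldots,x_1,y,y_b,\ldots,y_1$; thus $y$ is the last neighbor encountered, the chord $\per{s_1}$ carries the globally smallest label $1$, and the labels of the other chords at $x$ are already decreasing by induction and all exceed $1$, so appending label $1$ at the end preserves the decrease. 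The vertex $y$ is handled symmetrically, with $x$ ending up last in its clockwise order.

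The main obstacle is not any single hard computation but getting the general inductive statement and the orientation bookkeeping exactly right: one must confirm that $\per{s_1}$ lands \emph{last} in clockwise order at \emph{both} of its endpoints, which is precisely what makes its minimal label compatible with a clockwise-decreasing arrangement. Once the contiguity of the two subtree arcs and the matching of their circular orders with $C_x,C_y$ are in place (both read off from the product computation $C$ already available from Theorem~\ref{lemma_structural}), the two properties fall out together, with the degenerate case of a subtree being a single isolated vertex ($a=0$ or $b=0$) handled trivially and the base case $n\le 2$ being vacuous.
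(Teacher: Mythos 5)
Your proof is correct, but note that the paper never proves this theorem at all: it is quoted from \cite{GouldenYong} (their Theorem 2.2), and the later results that depend on it (Lemma~\ref{lem_cir_walk}, Theorem~\ref{thm_GY_dual}) simply rely on the citation. So your argument is necessarily a different route --- it is a proof where the paper has none --- and it is a natural one for this paper, since you reuse exactly the machinery already present: the cycle-splitting computation $C=(x,y_1,\ldots,y_b,y,x_1,\ldots,x_a)$ appears verbatim in the proof of Theorem~\ref{lemma_structural}, and your induction (peeling off the edge labeled $1$) mirrors the inductions used there and in Lemma~\ref{lemmagreedypartition}; there is no circularity, since neither of those results depends on Theorem~\ref{thm_gy}. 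I checked the key steps: the orientation dictionary (counterclockwise order $1,n,n-1,\ldots,2$ matching the cycle $(n,\ldots,2,1)$) is right; the contiguity of $V(\graph{T}_x)$ and $V(\graph{T}_y)$ on the circle and the agreement of their induced cyclic orders with $C_x$, $C_y$ do follow from $C$; and at $x$ the clockwise order $x_a,\ldots,x_1,y,y_b,\ldots,y_1$ indeed makes the label-$1$ chord the last one encountered, so appending the global minimum to an already decreasing list preserves the decrease (symmetrically at $y$). Two points deserve an explicit sentence in a final write-up: (i) that the sub-products $C_x$, $C_y$ are single cycles on the two components --- either by the general form of D\'enes' theorem (the paper's Theorem~\ref{thmDenes} is stated only for \Fup{n} and \Fdown{n}), or directly, since $(x,y)\cdot D$ splits into exactly two cycles, one through $x$ and one through $y$, which by disjointness of supports must be the products of $\graph{T}_x$ and $\graph{T}_y$; and (ii) that the induction hypothesis, stated for a standalone circle, transfers to a contiguous sub-arc of the larger circle because crossings and the clockwise order of chords at a vertex depend only on the cyclic order of the endpoints involved. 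Both are routine, so these are presentational rather than mathematical gaps; what your approach buys is a self-contained treatment, at the cost of about a page that the citation avoids.
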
  
The properties of the theorem can all be verified of the example in Figure~\ref{circleChord}.  We now give a definition that basically comes from \cite{GouldenYong}, calling it the \emph{Goulden-Yong Dual}; the coherence of the definition depends on Theorem~\ref{thm_gy}.
\begin{defn} \cite{GouldenYong}
Given a tree from \Fdown{n}, its \dword{Goulden-Yong Dual} is determined as follows:

\begin{itemize}

\item Draw its Circle Chord Diagram, which divides the disk into $n$ regions.

\item Place a new vertex in each region, labeling the vertex $k$ if it is in the region
that contains arc $k$.

\item Create an edge between two new vertices if their regions have a chord in common, labeling the edge by the label on the chord.

\end{itemize}

\end{defn}

For example, the Goulden-Yong Dual of the tree in Figure~\ref{GY_tree} is pictured in
Figure~\ref{planarDual}: The dashed edges and smaller vertices depict the original
tree from the Circle Chord Diagram of Figure~\ref{circleChord}, and the solid lines with larger vertices depict its
Goulden-Yong Dual.
Note that the Trail Dual of the tree in Figure~\ref{GY_tree} is exactly
the Goulden-Yong Dual pictured in Figure~\ref{planarDual}; 
we will see that this is generally true in Theorem~\ref{thm_GY_dual}.
As an example of the next lemma, note that the chords of region $2$ of Figure~\ref{circleChord} are the ones labeled $1$, $3$, and $5$, exactly the same as the edges traversed by trail $\wk_2$.

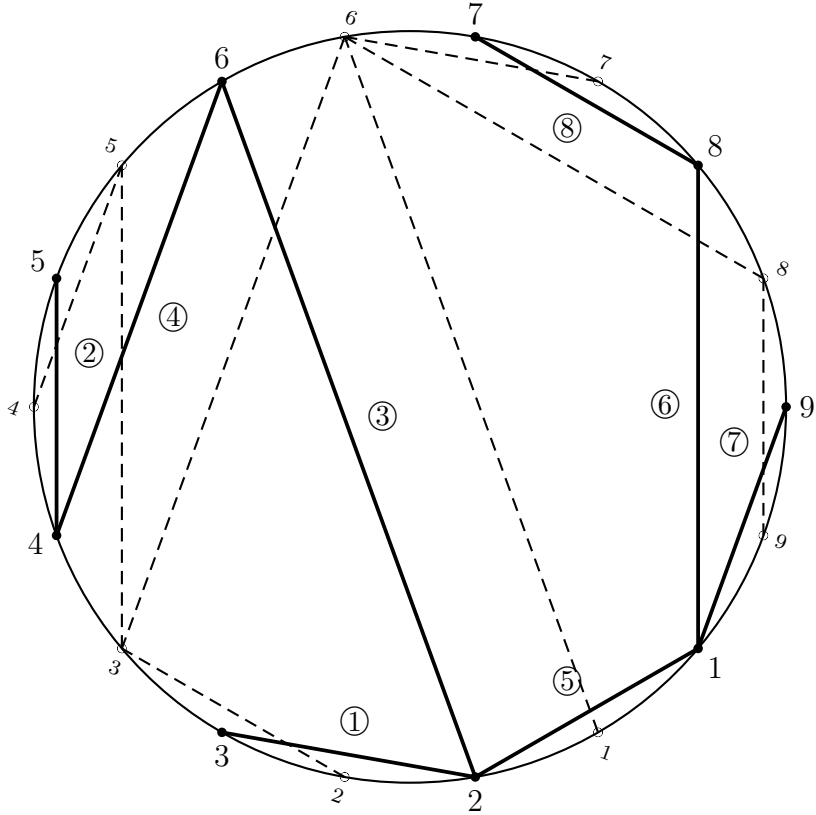
\begin{figure}
\begin{center}
{\psset{unit=5}
\begin{pspicture}(-1.02,-1.05)(1.02,1.05)
{\psset{linestyle=dashed,dotstyle=o}
\psrotate(0,0){-20}{
  \rput(0.766044443118978, -0.642787609686539){\rnode{1}{\psdot(0,0)}}	   
  \uput[315](0.766044443118978, -0.642787609686539){\tiny $1$}
  \rput(0.173648177666930, -0.984807753012208){\rnode{2}{\psdot(0,0)}}	   
  \uput[-90](0.173648177666930, -0.984807753012208){\tiny $ 2$}
  \rput(-0.500000000000000, -0.866025403784439){\rnode{3}{\psdot(0,0)}}   
  \uput[270](-0.500000000000000, -0.866025403784439){\tiny $ 3$}
  \rput(-0.939692620785908, -0.342020143325669){\rnode{4}{\psdot(0,0)}}   
  \uput[200](-0.939692620785908, -0.342020143325669){\tiny $ 4$}
  \rput(-0.939692620785908, 0.342020143325669){\rnode{5}{\psdot(0,0)}}	   
  \uput[140](-0.939692620785908, 0.342020143325669){\tiny $ 5$}
  \rput(-0.500000000000000, 0.866025403784439){\rnode{6}{\psdot(0,0)}}	   
  \uput[90](-0.500000000000000, 0.866025403784439){\tiny $ 6$}
  \rput(0.173648177666930, 0.984807753012208){\rnode{7}{\psdot(0,0)}}	   
  \uput[90](0.173648177666930, 0.984807753012208){\tiny $ 7$}
  \rput(0.766044443118978, 0.642787609686539){\rnode{8}{\psdot(0,0)}}	   
  \uput[45](0.766044443118978, 0.642787609686539){\tiny $ 8$}
  \rput(1.00000000000000, 0.000000000000000){\rnode{9}{\psdot(0,0)}}      
  \uput[0](1.00000000000000, 0.000000000000000){\tiny $ 9$}
  \ncline{1}{6}
  \ncline{6}{8}
  \ncline{6}{7}
  \ncline{6}{3}
  \ncline{3}{5}
  \ncline{4}{5}
  \ncline{2}{3}
  \ncline{8}{9}
}}
 \pscircle[linestyle=solid](0,0){1}
  \rput(0.766044443118978, -0.642787609686539){\rnode{1}{\psdot(0,0)}}	   
  \uput[315](0.766044443118978, -0.642787609686539){$1$}
  \rput(0.173648177666930, -0.984807753012208){\rnode{2}{\psdot(0,0)}}	   
  \uput[-90](0.173648177666930, -0.984807753012208){$2$}
  \rput(-0.500000000000000, -0.866025403784439){\rnode{3}{\psdot(0,0)}}   
  \uput[270](-0.500000000000000, -0.866025403784439){$3$}
  \rput(-0.939692620785908, -0.342020143325669){\rnode{4}{\psdot(0,0)}}   
  \uput[200](-0.939692620785908, -0.342020143325669){$4$}
  \rput(-0.939692620785908, 0.342020143325669){\rnode{5}{\psdot(0,0)}}	   
  \uput[140](-0.939692620785908, 0.342020143325669){$5$}
  \rput(-0.500000000000000, 0.866025403784439){\rnode{6}{\psdot(0,0)}}	   
  \uput[90](-0.500000000000000, 0.866025403784439){$6$}
  \rput(0.173648177666930, 0.984807753012208){\rnode{7}{\psdot(0,0)}}	   
  \uput[90](0.173648177666930, 0.984807753012208){$7$}
  \rput(0.766044443118978, 0.642787609686539){\rnode{8}{\psdot(0,0)}}	   
  \uput[45](0.766044443118978, 0.642787609686539){$8$}
  \rput(1.00000000000000, 0.000000000000000){\rnode{9}{\psdot(0,0)}}      
  \uput[0](1.00000000000000, 0.000000000000000){$9$}
  \ncline[linewidth=.01]{1}{9}
  \naput[npos=.8]{\raisebox{.5pt}{\textcircled{\raisebox{-.9pt} {\small $7$}}}}
  \ncline[linewidth=.01]{1}{8}
  \naput{\raisebox{.5pt}{\textcircled{\raisebox{-.9pt} {\small $6$}}}}
  \ncline[linewidth=.01]{1}{2}
  \nbput{\raisebox{.5pt}{\textcircled{\raisebox{-.9pt} {\small $5$}}}}
  \ncline[linewidth=.01]{2}{6}
  \nbput{\raisebox{.5pt}{\textcircled{\raisebox{-.9pt} {\small $3$}}}}
  \ncline[linewidth=.01]{2}{3}
  \nbput{\raisebox{.5pt}{\textcircled{\raisebox{-.9pt} {\small $1$}}}}
  \ncline[linewidth=.01]{6}{4}
  \naput{\raisebox{.5pt}{\textcircled{\raisebox{-.9pt} {\small $4$}}}}
  \ncline[linewidth=.01]{4}{5}
  \nbput[npos=.7]{\raisebox{.5pt}{\textcircled{\raisebox{-.9pt} {\small $2$}}}}
  \ncline[linewidth=.01]{7}{8}
  \nbput{\raisebox{.5pt}{\textcircled{\raisebox{-.9pt} {\small $8$}}}}
\end{pspicture}}
\end{center}
\caption{Goulden-Yong Dual (Solid Lines) Of The Tree In Figures~\ref{GY_tree} And
\ref{circleChord} (Dashed Lines).}
\label{planarDual}
\end{figure}

\begin{lem} \label{lem_cir_walk}
Suppose $\graph{T} \in \Fdown{n}$ and $C$ is its
Circle Chord Diagram.  Suppose $x \in [n]$.
Then the edges in $\wk_x$ are exactly the edges on the boundary in region $x$ of $C$.
\end{lem}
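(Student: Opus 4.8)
The plan is to show that the \migt{} $\wk_x$ traces out exactly the non-arc part of the boundary of region $x$, using the two properties of the Circle Chord Diagram guaranteed by Theorem~\ref{thm_gy}. Since the chords are non-crossing (the first property of Theorem~\ref{thm_gy}), the diagram is a plane embedding of the tree $\graph{T}$ inside the disk, and each region $x$ is a disk bounded by arc $x$ together with a sequence of chords that forms a path in $\graph{T}$ from vertex $x$ to vertex $x-1$ (reading vertex $0$ as vertex $n$). I would first record this structural description of a region. The goal is then to prove that this chord-path, read starting from vertex $x$, is precisely $\wk_x$; since $\wk_x$ begins at $x$ by definition, it suffices to match the starting chord, the one-step transitions, and the stopping condition.

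First I would analyze the starting vertex $x$. The arc $x$ and the chord of region $x$ incident to $x$ bound the same angular sector at $x$, so they are rotationally adjacent; since $\graph{T}$ is a tree drawn in the disk, arc $x$ sits at the clockwise extreme of the fan of edges emanating from $x$ into the disk, and hence, by the second property of Theorem~\ref{thm_gy} (chord labels decrease clockwise), the adjacent chord carries the \emph{smallest} label among the chords at $x$. This is exactly the first edge chosen by $\wk_x$, which leaves $x$ along its smallest incident edge.

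Next I would treat a generic interior vertex $v$ of the chord-path. Tracing the boundary of region $x$ so as to keep region $x$ on a fixed side, when the walk arrives at $v$ along a chord $c$ it must depart along the next incident chord in the rotational direction that preserves that side. By the second property of Theorem~\ref{thm_gy} this rotational successor is precisely the chord of \emph{smallest} label exceeding the label of $c$, which is verbatim the rule defining the \migt{}. The walk terminates at vertex $x-1$, which it reaches along the \emph{largest}-labeled chord there (the mirror image of the start analysis), so no larger chord is available and both the boundary walk and $\wk_x$ stop; as an independent consistency check, Lemma~\ref{lem_trail_trajectory} together with $\product(\graph{T}) = (n,\ldots,1)$ forces $\wk_x$ to end at vertex $x-1$ as well. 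Matching the start, the transitions, and the stopping condition shows, by induction along the walk, that $\wk_x$ uses exactly the chords on the boundary of region $x$.

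The greedy bookkeeping in the last two steps is immediate once Theorem~\ref{thm_gy} is in hand, so I expect the main obstacle to be the topological content of the first step: rigorously establishing that each region is a disk bounded by arc $x$ plus a simple chord-path, and that its boundary is generated by a single consistent rotational rule at the vertices. I would justify this directly from non-crossing-ness --- either by an induction that removes a chord cutting off an outermost region, or by invoking the standard face-tracing description of a plane graph via its rotation system, which here is exactly the clockwise chord order at each vertex supplied by Theorem~\ref{thm_gy}. Some care is needed with the degenerate cases in which $v$ is a leaf of $\graph{T}$ or an endpoint of arc $x$ has degree one, where the boundary walk turns immediately back along the circle.
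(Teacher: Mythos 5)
Your proposal is correct and takes essentially the same approach as the paper's proof: both use property~(\ref{item_decr_clock}) of Theorem~\ref{thm_gy} to identify the first edge of $\wk_x$ with the most clockwise (smallest-labeled) chord at $x$, match the greedy rule at each intermediate vertex with the one-chord-counter-clockwise turn of the boundary walk of region $x$, and conclude that $\wk_x$ traces that boundary clockwise from $x$ to $x-1$. The only real difference is that you spell out the topological facts (each region is a disk bounded by arc $x$ plus a chord-path, face-tracing via the rotation system) that the paper's proof leaves implicit, which makes your version somewhat more careful but not a different argument.
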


\begin{proof} 

Suppose the trajectory of $\wk_x$ is \seq{x, x_1, \ldots, x_k}.
Let $e$ be the most clockwise edge at $x$ (for example, in Figure~\ref{circleChord}, edge $3$ is the most clockwise edge at vertex $6$).
By property~\ref{item_decr_clock} of Theorem~\ref{thm_gy}, $\wk_x$ moves along edge
$e$ from $x$ to $x_1$.  Then, again by property~\ref{item_decr_clock}, the trail goes from $x_1$ to $x_2$, along the edge that is one chord counter-clockwise from $e$, when turning at $x_1$ (for example, in Figure~\ref{circleChord}, at vertex $3$, the chord labeled by $4$ is one chord counter-clockwise from the chord labeled $3$).   As we continue we see that $\wk_x$
traverses one of the $n$ regions of $C$, moving along its boundary in a clockwise fashion, starting at $x$ on the circle and ending at $x-1$ (understanding vertex $0$ to be the same as vertex $n$).  That is, $\wk_x$ consists of exactly the edges of region $x$.
\end{proof}

\begin{thm} \label{thm_GY_dual}
For any tree from \Fdown{n}, its Goulden-Yong Dual is the same as its dual.

\end{thm}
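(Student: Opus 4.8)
The plan is to verify directly that the Goulden-Yong Dual and our dual (which, by Theorem~\ref{thmTrailDualChar}, is the Trail Dual) are the very same labeled graph, by matching vertices first and then edges, with Lemma~\ref{lem_cir_walk} doing essentially all the work. First I would set up the vertex correspondence. Both duals have vertex set $[n]$: in the Goulden-Yong Dual the vertex labeled $k$ is the new vertex placed in region $k$, i.e.\ the region containing arc $k$, whereas in the Trail Dual the vertex labeled $k$ is simply vertex $k$ of \graph{T}, which is where the trail $\wk_k$ begins. I identify the region-$k$ vertex with vertex $k$ of \graph{T}, and Lemma~\ref{lem_cir_walk}, which states that the edges of $\wk_k$ are exactly the boundary edges of region $k$, is precisely what makes this the natural identification.

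Next comes the edge correspondence, which is the substance of the argument. Each edge in either dual carries a label $k$ coming from a chord (equivalently, an edge) of \graph{T}, so it suffices to show that for every label $k$ the edge labeled $k$ joins the same pair of vertices in both duals. Fix a chord labeled $k$. Because the chords are non-crossing (Theorem~\ref{thm_gy}), this chord lies on the common boundary of exactly two of the $n$ regions, say region $x$ and region $y$, and so the Goulden-Yong Dual places an edge labeled $k$ between vertices $x$ and $y$. On the other hand, by Lemma~\ref{lem_cir_walk} the chord $k$ lies on the boundary of region $z$ if and only if edge $k$ belongs to the trail $\wk_z$; since $k$ borders exactly regions $x$ and $y$, edge $k$ is used by exactly the two trails $\wk_x$ and $\wk_y$ (consistent with the Trail Double Cover property of Lemma~\ref{lemmagreedypartition}). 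But that is precisely the condition under which the Trail Dual places an edge labeled $k$ between $x$ and $y$. Hence the two duals agree on the edge labeled $k$ for every $k$.

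Combining the vertex and edge correspondences shows the two labeled graphs coincide. Given Lemma~\ref{lem_cir_walk}, no serious obstacle remains; the only point needing care is the topological bookkeeping that each non-crossing chord bounds exactly two regions and that the region labeling (by arc containment) lines up with the trail labeling. Since that is exactly what Lemma~\ref{lem_cir_walk} delivers, the theorem follows as a short matching argument rather than a fresh computation.
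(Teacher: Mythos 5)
Your proposal is correct and follows essentially the same route as the paper: both arguments note that the two duals share the vertex set $[n]$ and then use Lemma~\ref{lem_cir_walk} as the bridge identifying ``edge $k$ is used by trails $\wk_u$ and $\wk_v$'' (the Trail Dual condition) with ``chord $k$ bounds regions $u$ and $v$'' (the Goulden-Yong condition). Your extra appeals to the non-crossing property and to Lemma~\ref{lemmagreedypartition} are harmless supporting remarks, but the core matching of edge labels is exactly the paper's chain of equivalences.
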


\begin{proof}

Consider some tree $\graph{T} \in \Fdown{n}$, and let $C$ be its Circle Chord Diagram.
Let $\graph{T'}$ be its Trail Dual and $\graph{T^*}$ its Goulden-Yong Dual.
Both $\graph{T'}$ and $\graph{T^*}$ are labeled graphs with vertex set $[n]$, so it suffices to observe that for any distinct
$u, v \in [n]$, we have the following equivalences
(where the second one follows by Lemma~\ref{lem_cir_walk}).

\vspace{3mm}

\noindent
$\{u,v\} \hbox{ is an edge in } \graph{T'} \hbox{ with label } k$

\hspace{10mm} $\Leftrightarrow  \wk_u \hbox{ and } \wk_v \hbox{ both use edge } k$

\hspace{10mm} $\Leftrightarrow \hbox{Chord } k \hbox{ is on the boundary of region } u \hbox{ and region } v.$

\hspace{10mm} $\Leftrightarrow \hbox{In } \graph{T}^* \hbox{ there is an edge labeled } k \hbox{ between } u \hbox{ and } v.$  
\end{proof}

\section{Conclusion and Future Work}

In this paper  we focused on minimal transitive factorizations of the permutation $(n, \ldots, 2, 1)$, investigating an interesting bijection.  In \cite{GouldenJackson1997} a general formula is found for the number of minimal transitive factorizations of any permutation.  Based on this result, they motivate the search for interesting bijections between such sets of factorizations and other sets of combinatorial interest.  Making progress on this program, \cite{KimSeo2003}
found a bijection for the minimal transitive factorizations of $(1)(2, \ldots, n)$, and 
\cite{Rattan2006} found bijections for $(1, 2)(3, \ldots, n)$ and $(1, 2, 3)(4, \ldots, n)$; both papers used parking functions.
Our hope is that our alternative definitions of the dual, which apply to any graph (not just trees), could be a useful tool for such research.

\section{Acknowledgments}
K. Ojakian was supported by a PSC-CUNY Research Award (Traditional A).

\bibliography{BibOfFactorizationsAndGraphs}
\bibliographystyle{plain}

\end{document}